\theoremstyle{plain}
\numberwithin{equation}{section}
\newtheorem{problem}{Problem}
\newtheorem{theorem}{Theorem}[section]
\newtheorem{conjecture}[theorem]{Conjecture}
\newtheorem{corollary}[theorem]{Corollary}
\newtheorem{definition}[theorem]{Definition}
\newtheorem{lemma}[theorem]{Lemma}
\newtheorem{proposition}[theorem]{Proposition}
\newtheorem{remark}[theorem]{Remark}
\newtheorem{example}[theorem]{Example}
\def\bn{\begin{definition}}
\def\en{\end{definition}}
\def\ba{\begin{array}}
\def\ea{\end{array}}
\def\be{\begin{equation}}
\def\ee{\end{equation}}
\def\bd{\begin{description}}
\def\ed{\end{description}}
\def\bu{\begin{enumerate}}
\def\eu{\end{enumerate}}
\def\bi{\begin{itemize}}
\def\ei{\end{itemize}}
\newcommand{\BBB}{\mathcal B}
\def\R{\mathbb{R}}
\def\N{\mathbb{N}}
\def\Re{\text{Re}}
\def\i{\mathfrak{i}}
\def\<{\langle}
\def\>{\rangle}
\newtheorem*{rep@theorem}{\rep@title}
\newcommand{\newreptheorem}[2]{%
\newenvironment{rep#1}[1]{%
 \def\rep@title{#2 \ref{##1} (restated)}%
 \begin{rep@theorem}}%
 {\end{rep@theorem}}}
\begin{document}
\title[]
{M\"{o}bius disjointness for a class of exponential functions}

\author[]{Weichen Gu}

\address{Department of Mathematics, University of New Hampshire, Durham, NH 03824, USA --and--Academy of Mathematics and Systems Science, Chinese Academy of Sciences, Beijing 100190,China}
\email{guweichen14@mails.ucas.ac.cn}

\author[]{Fei Wei$\,^{\dag}$}
\thanks{$^\dag$ Corresponding author (weif@mail.tsinghua.edu.cn)}

\address{Yau Mathematical Sciences Center, Tsinghua University, Beijing 100084, China}

\email{weif@mail.tsinghua.edu.cn}

\maketitle
\begin{abstract}
A vast class of exponential functions are shown to be deterministic.
This class includes functions whose exponents are polynomial-like or ``piece-wise'' close to polynomials after differentiation.
Many of these functions are proved to be disjoint from the M\"obius function. \\
\textsc{2020 Mathematics subject Classification}. Primary: 11N37; Secondary: 37A44, 11L03. \\
\textsc{keywords}. M\"obius function, disjointness, $k$-th difference.
\end{abstract}
\section{Introduction}\label{introduction}
Let $\mu(n)$ be the M\"{o}bius function,
that is,
$\mu(n)$ is $0$ when $n$ is not square free (i.e., divisible by a nontrivial square),
and is $(-1)^{r}$ when $n$ is a product of $r$ distinct primes.
Many problems in number theory can be reformulated in terms of properties of the M\"{o}bius function.
For example,
the Prime Number Theorem is known to be equivalent to $\sum_{n\leq N}\mu(n)=o(N)$.
The Riemann hypothesis holds if and only if $\sum_{n\leq N} \mu(n)= o(N^{\frac{1}{2}+ \epsilon})$ for every $\epsilon>0$.

For a truly random sequence $a_n$ of $-1$ and $1$,
the normalized average $(\sum_{n=1}^N a_n)/N^{\frac12}$ obeys Gaussian law in distribution as $N$ tends to infinity,
which implies that $\sum_{n\le N}a_n=o(N^{\frac12+\epsilon})$.
Under the Riemann hypothesis, the M\"obius function shares this property as an indication that certain randomness may exist in the values of the M\"obius function. It is widely believed that this randomness predicts significant cancellations in the summation of $\mu(n)\xi(n)$ for any ``reasonable" sequence $\xi(n)$. This rather vague principle is known as an instance of the ``M\"obius randomness principle" (see e.g., \cite[Section 13.1]{Iwaniec}). In \cite{Sar}, Sarnak made this principle precise by identifying the notion of ``reasonable"  and proposed the following conjecture.
\begin{conjecture}[Sarnak's  M\"{o}bius Disjointness Conjecture (SMDC)]\label{SMDC}
Let $\xi(n)$ be a deterministic sequence. Then
\[\lim_{N\rightarrow \infty}\frac{1}{N}\sum_{n=1}^{N}\mu(n)\xi(n)=0.\]
\end{conjecture}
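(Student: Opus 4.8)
The statement to be established is Sarnak's conjecture in full generality, so I will treat ``deterministic'' in its standard dynamical sense: there is a topological dynamical system $(X,T)$ of zero topological entropy, a function $f\in C(X)$, and a point $x_0\in X$ with $\xi(n)=f(T^nx_0)$. The plan is to analyze
\[
\frac1N\sum_{n=1}^{N}\mu(n)\,f(T^nx_0)
\]
through a structure-versus-randomness dichotomy on the orbit closure of $x_0$: isolate a ``pseudorandom'' part on which $\mu$ can be decorrelated by a prime-sieving argument, isolate a ``structured'' part controlled by known Möbius orthogonality theorems, and then glue the two. First I would pass, via Stone--Weierstrass on $X$ and the ergodic decomposition of the $T$-invariant measures supported on the orbit closure, from a single $f$ to a convenient dense family and to the individual ergodic components, each of which has zero Kolmogorov--Sinai entropy by the variational principle.

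For the pseudorandom contribution the natural device is the K\'atai--Bourgain--Sarnak--Ziegler orthogonality criterion: it suffices to show that for every pair of distinct primes $p\neq q$ one has
\[
\lim_{N\to\infty}\frac1N\sum_{n\le N} f(T^{pn}x_0)\,\overline{f(T^{qn}x_0)}=0 .
\]
I would try to reduce this self-correlation estimate to the behaviour of two-point correlations of $\xi$ along arithmetic progressions of distinct common difference, and then invoke the Matom\"aki--Radziwi\l\l\ theory of multiplicative functions in short intervals together with Tao's entropy-decrement argument, which convert control of such correlations into a genuine decay statement whenever the system carries no nontrivial isometric behaviour along the ratio $p/q$.

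For the structured contribution I would factor $(X,T)$ through its maximal equicontinuous, and more generally its maximal nilfactor. On that quotient $f(T^nx_0)$ is well approximated by a nilsequence, where orthogonality of $\mu$ is exactly the Green--Tao theorem; a density and limiting argument then transfers this cancellation to the full contribution of the structured factor. In principle, combining the decay on the pseudorandom part with the Green--Tao input on the structured part would yield the conjecture for the ergodic component at hand, and integrating over components would finish the proof.

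The hard part — and the reason this statement is in fact an open conjecture — is the glue between these two regimes for an \emph{arbitrary} zero-entropy system. Zero topological entropy by itself does not supply the prime-dilate orthogonality demanded by the BSZ criterion: rigid systems, and more generally systems with rich point spectrum lying beyond the nilpotent range, can have $f(T^{pn}x_0)\overline{f(T^{qn}x_0)}$ that fails to average to zero, while their correlation sequences need not be nilsequences and so escape the Green--Tao input. The only general route I see through this intermediate regime passes to the Chowla conjecture — which implies Sarnak's — but its full (Cesàro) form is known only in the logarithmically averaged two-point case, and the higher correlations remain out of reach. Absent a new classification of the Möbius-relevant part of distal and rigid zero-entropy dynamics, or a full resolution of Chowla, the argument above cannot be completed unconditionally; this is precisely the obstacle I expect to dominate.
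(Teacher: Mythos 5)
You were asked to prove Conjecture \ref{SMDC} itself, and the first thing to say is that the paper contains no proof of it: SMDC is stated as an open conjecture, and every result in the paper either verifies it for special classes of sequences (Theorems \ref{a large class of functions with zero anqie entropy}, \ref{0601thm2}, \ref{k=1rapiddecreasingspeed}, Propositions \ref{k=0}, \ref{k=1 with finite accumulation points}, \ref{sufficient}) or derives consequences \emph{conditionally} on it (Theorem \ref{SMDC implies the distribution of moebius in short interval}, Corollary \ref{in terms of distribution of polynomial phases}). So there is nothing in the paper to compare your argument against, and your refusal to claim a complete proof is exactly the right call: any ``blind proof'' of this statement would necessarily contain a fatal gap.

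Your assessment of where that gap lives is accurate. The K\'atai--Bourgain--Sarnak--Ziegler criterion does reduce matters to the bilinear condition $\frac{1}{N}\sum_{n\le N} f(T^{pn}x_0)\overline{f(T^{qn}x_0)}\to 0$ for distinct primes, but zero topological entropy (equivalently, by the variational principle, zero entropy of every invariant measure on the orbit closure) gives no purchase on these prime-dilated self-correlations: rigid systems and distal systems with spectrum beyond the nilpotent range satisfy neither the BSZ hypothesis nor the nilsequence approximation needed for the Green--Tao input, which is precisely the regime your ``glue'' step cannot handle. Two technical caveats on the partial tools you cite: the Matom\"aki--Radziwi{\l}{\l}--Tao entropy-decrement machinery yields decorrelation only under \emph{logarithmic} averaging (Tao's two-point logarithmic Chowla, and Frantzikinakis--Host's logarithmic Sarnak for ergodic weights \cite{FH}), not the Ces\`aro averages demanded by the conjecture as stated; and passing from a single ergodic component back to the fixed point $x_0$ is itself delicate, since the orbit of $x_0$ need not be generic for any single invariant measure. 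Your proposal is best read not as a proof attempt but as a correct map of the known frontier, and as such it is consistent with how the paper itself treats the conjecture.
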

Here, we recall the definition of deterministic sequences. Functions from $\mathbb{N}$ to $\mathbb{C}$ are called \emph{arithmetic functions} or\emph{ sequences}. An arithmetic function $f(n)$ is said to be \emph{disjoint} from another one $g(n)$ if $\sum_{n=1}^{N}f(n) \overline{g}(n)=o(N)$. Let $(\mathcal{X},T)$ be a topological dynamical system, that is $\mathcal{X}$ is a compact Hausdorff space and $T:\mathcal{X}\rightarrow \mathcal{X}$ a continuous map.  We say $n\mapsto F(T^{n}x_{0})$ \emph{an arithmetic function realized in the topological dynamical system $(\mathcal{X},T)$}, and $(\mathcal{X},T)$ a \emph{realization} of this arithmetic function. Functions realized in topological dynamical systems of zero topological entropy are called \emph{deterministic}.

In recent years, a lot of progress has been made on SMDC. See \cite{JB,JPT, CE,ELde,FJ,FH,GT,HW,HLW,HWY,HWZ,Kan,DK,LOZ,LW,JP,CM16,Pec,Vee,Wei2, W,xu}, to list a few. In the following, we shall discuss only the results that are more related to this paper. The goal of this article is to show that a class of exponential functions is deterministic and to verify the conjecture of Sarnak for them.
\subsection {Notation}
We use $e(f(n))$ to denote the exponential function $\exp(2\pi\i f(n))$ when $f$ is a real-valued arithmetic function,
where $\i$ is the imaginary unit.
When we say the exponential function of $f(n)$,
we mean $e(f(n))$.
Sometimes we call $e(f(n))$ an $f(n)$ \emph{phase}.

We use $1_{S}$ to denote the indicator of a predicate $S$,
that is $1_{S}=1$ when $S$ is true and $1_{S}=0$ when $S$ is false.
We also denote $1_{A}(n)=1_{n\in A}$ for any subset $A$ of $\mathbb{N}$.
For any finite set $C$,
$|C|$ denotes the cardinality of $C$.

We use $\{x\}$ and $\lfloor x \rfloor$ to denote the fractional part and the integer part of a real number $x$,
respectively.
We use $\|x\|_{\mathbb{R}/\mathbb{Z}}$ to denote the distance between $x$ and the set $\mathbb{Z}$,
i.e., $\|x\|_{\mathbb{R}/\mathbb{Z}}= \min(\{x\},1-\{x\})$.
For ease of notation,
we drop the subscript and write simply $\|x\|$.

The difference operator $\triangle$ is defined on the set of all arithmetic functions,
mapping $f(\cdot)$ to $f(\cdot+1)-f(\cdot)$.
The $k$-th difference operator $\triangle^{k}$ is defined by the composition of $\triangle$ with $k$ times.

For two arithmetic functions $f(n)$ and $g(n)$, $f=o(g)$ means $\lim_{n\rightarrow \infty}f(n)/g(n)=0$; assume that $g(n)\geq 0$ for any $n\in \mathbb{N}$,
$f\ll g$ means that there is an absolute constant $c$ such that $|f|\leq cg$;
$f=g+O(h)$ means $f-g\ll h$.
\subsection{A class of deterministic sequences}
It was investigated in \cite{Ge} and \cite{Wei1}, the set of deterministic sequences is closed under many operations, for example addition, multiplication, inversion, conjugation and translation. This set is also closed under the uniform limit.
Moreover, any continuous function of a deterministic sequence is also deterministic.

\textbf{Motivation.} From the above, it is easy to see that the set of deterministic sequences is closed under the difference operator $\triangle$. As we know, $\triangle$ has an inverse operator $\sigma$ (up to an initial value $f(0)$), mapping $f$ to $\sigma(f): n\mapsto \sum_{j<n}f(j)$.
Then it is natural to ask whether $\sigma(f)$ is also deterministic when $f$ is deterministic. We may assume $f$ is real in
this question because a complex arithmetic function is deterministic if and only if both its real part
and imaginary part are deterministic.
However, $\sigma (f)$ could be unbounded (so the corresponding dynamical system is no longer compact)
even if $f$ is bounded. It is therefore better to consider what kind of properties $f$ has can ensure that
$e(\sigma(f))$ is deterministic. For this question, we give the following answer, which states that $e(\sigma(f))$ is deterministic
when the $k$-th difference $\triangle^k f(n)$  is ``piece-wise'' close to polynomials.


\begin{theorem}\label{a large class of functions with zero anqie entropy}
Let $w$ be a positive integer.
Suppose that $p_1(y),\ldots, p_w(y)$ are polynomials in $\mathbb{R}[y]$,
and $\N=S_1\cup S_2\cup\cdots \cup S_{w}$ is a partition of $\mathbb{N}$ with each $1_{S_{v}}(n)$ deterministic.
Let
\begin{equation}\label{thedefinionofg}
g(n)=\sum_{v=1}^w 1_{S_v}(n) p_v(n).
\end{equation}
Then for any real-valued arithmetic function $f(n)$ satisfying
\begin{equation}\label{conditions need to satisfy12/31}
\lim_{n\rightarrow \infty} \|\triangle^k f(n)-g(n)\|=0
\end{equation}
for some $k\in \N$, $e(f(n))$ and  $e(\sigma(f)(n))$ are deterministic.
\end{theorem}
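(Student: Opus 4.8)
The plan is to isolate three ingredients and then run a short induction on $k$. Throughout I would write $\sigma\phi(n)=\sum_{j<n}\phi(j)$, so that $\triangle\sigma\phi=\phi$ and $\sigma\triangle h(n)=h(n)-h(0)$ by telescoping; consequently $e(h(n))=e(h(0))\,e(\sigma(\triangle h)(n))$, which transfers the entire difficulty onto understanding how $e(\cdot)$ behaves under the summation operator $\sigma$. The first ingredient is that $e(g)$ is deterministic. Since the $S_v$ partition $\N$, exactly one indicator is nonzero at each $n$, so $e(g(n))=\sum_{v=1}^w 1_{S_v}(n)\,e(p_v(n))$. Each $e(p_v(n))$ is a polynomial phase, hence deterministic (classical, and in any case recoverable from the third ingredient below by induction on $\deg p_v$); each $1_{S_v}$ is deterministic by hypothesis; and determinism is closed under products and finite sums. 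Thus $e(g)$ is deterministic.

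The second ingredient is error absorption: any sequence tending to a constant is deterministic, being a uniform limit of eventually-constant (hence deterministic, with finite orbit closure) sequences. From $\|\triangle^k f-g\|\to0$ one gets $e(\triangle^k f)-e(g)=e(g)\big(e(\triangle^k f-g)-1\big)\to0$, so $e(\triangle^k f)=e(g)+o(1)$ is deterministic.

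The third and decisive ingredient is a summation lemma: if $e(\phi)$ is deterministic then so is $e(\sigma\phi)$. I would argue as follows. Write $e(\phi)(n)=F(T^n x_0)$ with $(\mathcal X,T)$ of zero entropy and $F\colon\mathcal X\to S^1$ continuous (continuity forces $F$ into $S^1$ on the orbit closure). Form the circle extension $\tilde T(x,z)=(Tx,F(x)z)$ on the compact space $\mathcal X\times S^1$. Then $\tilde T^n(x_0,1)=\big(T^n x_0,\prod_{j<n}e(\phi(j))\big)=\big(T^n x_0,e(\sigma\phi(n))\big)$, so the continuous second-coordinate projection realizes $e(\sigma\phi)$ in $(\mathcal X\times S^1,\tilde T)$. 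Because $\tilde T$ is an isometric (compact-group) extension of $T$, its fibre entropy vanishes, and Bowen's inequality gives $h_{\mathrm{top}}(\tilde T)\le h_{\mathrm{top}}(T)=0$; hence $e(\sigma\phi)$ is deterministic. This is precisely the mechanism that tames the unboundedness of $\sigma\phi$: the real sequence $\sigma\phi$ leaves every compact set, yet its exponential is carried by a zero-entropy circle extension.

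With these in hand I would prove by induction on $k$ the statement ``$\|\triangle^k h-g\|\to0\Rightarrow e(h)$ deterministic''. The base case $k=0$ is the second ingredient. For the step, set $\phi=\triangle h$; then $\triangle^{k-1}\phi=\triangle^k h$, so $e(\triangle h)$ is deterministic by the inductive hypothesis, and $e(h)=e(h(0))\,e(\sigma(\triangle h))$ is deterministic by the third ingredient. Applying this with $h=f$ yields $e(f)$ deterministic; applying the third ingredient once more to $\phi=f$ yields $e(\sigma f)$ deterministic, which finishes the proof. I expect the main obstacle to be the third ingredient, specifically the careful verification that the circle extension has zero topological entropy (the fibre-entropy estimate for compact-group extensions), together with the bookkeeping in the first ingredient that combines the partition identity with the determinism of the polynomial phases and of the indicators $1_{S_v}$.
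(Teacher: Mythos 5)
Your proof is correct, but it takes a genuinely different route from the paper's. The paper stays entirely inside the anqie-entropy framework: it approximates $f$ modulo $1$ by finite-range functions $g_N$ built from the linear recursion $\triangle^k Y=g$, maps each regular effective $2^m$-block of $g_N$ injectively to a piece of $\mathbb{R}^q$ cut by an explicit family of hyperplanes, invokes the counting bound of Lemma \ref{0406lem:fengekongjian} to show that the number of such blocks is at most polynomial in $2^m$ times $|\mathcal{B}_{2^m}(\eta)|$, concludes that the anqie entropy of $g_N$ is bounded by that of $\eta$ (which is zero), and finishes with lower semicontinuity (Proposition \ref{semicontinuity}). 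You instead isolate one dynamical closure property -- if $e(\phi)$ is deterministic then so is $e(\sigma\phi)$ -- proved by restricting a zero-entropy realization to the orbit closure, forming the isometric circle extension $\tilde T(x,z)=(Tx,F(x)z)$, and using the vanishing of fibre entropy together with Bowen's inequality to get $h_{\mathrm{top}}(\tilde T)\le h_{\mathrm{top}}(T)=0$; the theorem then follows from a telescoping induction on $k$, the closure properties (sums, products, uniform limits) quoted in the paper's introduction, and the observation that sequences tending to a constant are deterministic. Your argument is shorter and more conceptual, and your summation lemma directly answers the motivating question the authors pose before the theorem; it is a reusable fact of independent interest. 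Two things the paper's combinatorial route buys in exchange: first, self-containedness -- your appeal to Bowen's fibre-entropy inequality is stated for compact \emph{metric} systems, so strictly you should first replace the given realization by the canonical shift realization (a factor, hence still of zero entropy) to guarantee metrizability, a step you elide; second, and more substantially, the hyperplane-counting template transfers with uniformity in an extra parameter, which is exactly what is needed to prove Theorem \ref{0601thm2} for concatenations of a family $\{f_i\}$ (a concatenation is not globally of the form ``exponential of a partial sum of a deterministic phase,'' so your skew-product lemma does not obviously apply there), and the same template is reused again in Proposition \ref{zero entropy set}.
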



Now, we explain briefly the main idea to prove the above result. The major tool we use is anqie entropy (of arithmetic functions), which was introduced by Ge in \cite{Ge}. We refer readers to Section \ref{some properties of anqie entropy} for knowledge on anqie entropy.
To prove Theorem \ref{a large class of functions with zero anqie entropy}, we first construct a sequence of arithmetic functions $\{f_{N}(n)\}_{N=0}^{\infty}$ with finite ranges that uniformly converges to $f(n)$. By the lower semi-continuity of anqie entropy (see Proposition \ref{semicontinuity}), it suffices to show that the anqie entropy of $f_{N}$ is zero for $N$ large enough. Note that the anqie entropy of $f_{N}$ (with finite range) is given through the cardinality of different $J$-blocks occurring in it (see formula (\ref{formula in introduction})). So we focus on estimating this cardinality. Our method is to built a one-to-one map from the set of $J$-blocks occurring in the sequence $f_{N}(n)$ to the set of pieces of $\mathbb{R}^{k}$ cut by hyperplanes (for some $k$ depending on $J$). So the cardinality of the first set is bounded by the cardinality of the latter one. And we prove that the second cardinality has polynomial growth (see Lemma \ref{0406lem:fengekongjian}).
We refer readers to Section \ref{Anqie entropy of a certain class of arithmetic functions} for more details.

We also consider whether the
exponential function of any concatenation of polynomials is deterministic.
Before stating the next result,
we first introduce the definition of concatenation of arithmetic functions.
\begin{definition}
Let $0=N_0<N_1<\cdots$ be a sequence of natural numbers with $\lim_{i\rightarrow \infty}(N_{i+1}-N_{i})=\infty$, and $\{f_{i}(n)\}_{i=0}^{\infty}$ be a sequence of arithmetic functions.
We say that $f(n)$ is \textbf{the concatenation of $\{f_{i}(n)\}_{i=0}^{\infty}$}\textbf{ with respect to the sequence} $\{N_{i}\}_{i=0}^{\infty}$ if $f(n)=f_{i}(n)$ when $N_{i}\leq n<N_{i+1}$ for $i=0,1,\ldots$.
\end{definition}

For the exponential functions of concatenations,
we obtain the following result.
\begin{theorem}\label{0601thm2}
Let $g(n)$ be defined as in Theorem \ref{a large class of functions with zero anqie entropy}. Suppose that $\{f_{i}(n)\}_{i=0}^{\infty}$ is a sequence of real-valued arithmetic functions such that
\begin{align}\label{0604shi1}
\lim_{n\rightarrow\infty} \sup_{i\in \N} \; \|\triangle^k f_i(n)-g(n)\|=0
\end{align}
holds for some $k\in \mathbb{N}$.
Then for any concatenation $f(n)$ of $\{f_{i}(n)\}_{i=0}^{\infty}$, $e(f(n))$ is deterministic.
\end{theorem}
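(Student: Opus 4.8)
The plan is to show that $e(f)$ has zero anqie entropy, following the scheme used for Theorem~\ref{a large class of functions with zero anqie entropy} but adapted to the piecewise structure. First, note that (\ref{0604shi1}) gives, for every fixed $i$, $\lim_{n\to\infty}\|\triangle^k f_i(n)-g(n)\|=0$, so that each $e(f_i)$ is deterministic by Theorem~\ref{a large class of functions with zero anqie entropy}; the genuine issue is that $\triangle^k f$ need not approach $g$ near the breakpoints $N_i$, so Theorem~\ref{a large class of functions with zero anqie entropy} does not apply to $f$ itself. As in the proof of that theorem, I would round the fractional parts $\{f(n)\}$ to a grid of mesh $1/M_N$ to obtain finite-range approximants $f_N$ with $e(f_N)\to e(f)$ uniformly; since $f=f_i$ on $[N_i,N_{i+1})$, each $f_N$ is itself the concatenation of the rounded pieces. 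By the lower semi-continuity of anqie entropy (Proposition~\ref{semicontinuity}), it then suffices to prove that the anqie entropy of $f_N$ vanishes for every $N$, and by formula~(\ref{formula in introduction}) this amounts to showing that the number of distinct $J$-blocks of $f_N$ grows subexponentially in $J$.

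Two inputs drive this estimate. The first is a single-piece bound that is uniform in $i$: running the hyperplane-cutting construction of Theorem~\ref{a large class of functions with zero anqie entropy} for each $f_i$ separately, a length-$\ell$ block of the rounded $f_i$ is the image of its $\le k$ initial differences together with the contribution of $g$ across the window, hence is determined by a point of a Euclidean space relative to an arrangement of hyperplanes whose number of pieces is polynomial in $\ell$ by Lemma~\ref{0406lem:fengekongjian}. Because (\ref{0604shi1}) controls $\sup_{i}\|\triangle^k f_i(n)-g(n)\|$ simultaneously, this arrangement, and hence the polynomial bound $P(\ell)$ on the number of distinct length-$\ell$ pieces, can be chosen independently of $i$. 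The second input is combinatorial: since $N_{i+1}-N_i\to\infty$, for each $M$ all but finitely many gaps exceed $M$, so the number $R(J)$ of pieces meeting any window of length $J$ satisfies $R(J)\le j_M+J/M+O(1)$ for every $M$, whence $R(J)=o(J)$.

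Combining these, a $J$-block of $f_N$ decomposes into at most $R(J)$ consecutive segments, the $t$-th being a block of some rounded $f_i$ of length $\ell_t$ with $\sum_t\ell_t=J$. The number of ways to split the window into these lengths is at most $\binom{J}{R(J)}$, and each segment contributes at most $P(\ell_t)$ choices, so the number of distinct $J$-blocks is at most $\binom{J}{R(J)}\prod_t P(\ell_t)$. Taking logarithms and using the concavity of $\log P$ together with $\log\binom{J}{R}=O\!\big(R\log(eJ/R)\big)$, this is $O\!\big(R(J)\log(eJ/R(J))\big)$; and because $R(J)=o(J)$ forces $R(J)\log(eJ/R(J))=o(J)$, the count is $e^{o(J)}$. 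Hence the anqie entropy of each $f_N$ is zero, and by the reduction above $e(f)$ is deterministic.

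The main obstacle is the honest bookkeeping of the breakpoint-straddling blocks, where the two inputs must be made to interact. The gap hypothesis has to be used twice at once: to guarantee that a window meets only $o(J)$ pieces, and to ensure that the resulting segment-length patterns contribute only subexponentially, which is why the sharp estimate $\log\binom{J}{R}=O(R\log(eJ/R))$, rather than the cruder $R\log J$, is essential. Equally delicate is the uniformity of the single-piece bound $P(\ell)$: one must verify that the discrete-Taylor reconstruction of a window from its $k$ initial differences and the common function $g$ is governed, across every piece $f_i$ simultaneously, by one $i$-independent hyperplane arrangement, which is precisely what the uniform convergence in (\ref{0604shi1}) supplies. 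By contrast, windows lying inside a single piece are handled verbatim by Theorem~\ref{a large class of functions with zero anqie entropy}.
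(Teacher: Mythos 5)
Your strategy is the paper's strategy (uniform approximation from (\ref{0604shi1}), hyperplane counting as in Lemma \ref{0406lem:fengekongjian} with an arrangement independent of $i$, breakpoint bookkeeping, then Proposition \ref{semicontinuity}), but there is a genuine gap at the very first step, and it is load-bearing. You define the finite-range approximants by rounding the fractional parts $\{f(n)\}$ themselves to a grid, and then assert that a length-$\ell$ block of the rounded $f_i$ is \emph{determined} by the $\le k$ initial values together with $g$, hence by a piece of a hyperplane arrangement. That assertion is false for the pointwise rounding of $f_i$, and in fact the whole scheme cannot work with that approximant: take $k=1$, $g\equiv 0$, and $f(n)=a_n/(n+2)$ with $a_n\in\{-1,+1\}$ an arbitrary sign sequence. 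Then $\|\triangle f(n)\|\rightarrow 0$, so $f$ satisfies the hypothesis, yet for every mesh the rounded sequence equals $0$ when $a_n=+1$ and the top grid value when $a_n=-1$; choosing $(a_n)$ of full complexity, every pointwise rounding of $f$ has entropy $\log 2$, so no appeal to Proposition \ref{semicontinuity} through these approximants can succeed. What must be rounded is not $f_i$ but the reconstruction $Y_{n,i}$ of Lemma \ref{find Y(n)} (equal to $f_i$ on the first $k$ points of each window and propagated by $\triangle^k Y_{n,i}=g$): the values $\{Y_{n,i}(\cdot)\}$ are exact linear functions of the coordinate point, so their grid cells are constant on each piece, whereas $f_i$ differs from $Y_{n,i}$ by an uncontrolled perturbation that can cross grid lines. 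One still has $\|f_i-\text{(rounded }Y_{n,i})\|<2/N$, so the uniform approximation survives; this is exactly the paper's construction, both here and in Theorem \ref{a large class of functions with zero anqie entropy}.

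Two further points in your counting also need repair. First, once the approximant is the rounded $Y$, the approximation is only valid beyond thresholds $L_m$ supplied by Lemma \ref{find Y(n)}, and these may grow arbitrarily fast (they depend on the unspecified rate in (\ref{0604shi1})); hence counting \emph{all} $J$-blocks via formula (\ref{formula in introduction}), as you do, is not controlled, because the windows located before the threshold are not governed by any hyperplane arrangement and may contribute far more than $e^{o(J)}$ blocks. The paper's device is Proposition \ref{0330thmyouxiaozhengshang}: one counts only regularly effective blocks, so every counted block has occurrences at aligned positions beyond $L_m$ and beyond the last short gap. This same device makes your $R(J)=o(J)$ and binomial machinery unnecessary: at scale $2^m$, all but finitely many gaps $N_{i+1}-N_i$ exceed $2^m$, so every counted window meets at most two pieces, and the block count is at most $2^m$ times the square of the uniform-in-$i$ single-piece count. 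Second, that single-piece count is not polynomial in $\ell$ when $w\ge 2$: it is a polynomial in $\ell$ times $|\mathcal{B}_{\ell}(\eta)|$, where $\eta$ records the partition $S_1,\ldots,S_w$, and this factor is only $e^{o(\ell)}$; your concavity-of-$\log P$ step would have to be replaced by an argument accommodating a subexponential, not polynomial, factor.
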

As an application of Theorem \ref{0601thm2}, we prove that Sarnak's conjecture implies the averages of the products of M\"{o}bius and certain exponential functions are small in almost all short intervals (Theorem \ref{SMDC implies the distribution of moebius in short interval}).
\subsection{Disjointness of M\"{o}bius from $e(f(n))$ with the $k$-th difference of $f(n)$ tending to zero} The disjointness of M\"{o}bius from exponential functions is important and has been extensively studied in number theory. For example, the disjointness of $\mu(n)$ from $e(n\alpha)$, for any $\alpha\in \mathbb{R}$, is closely related to the estimate of exponential sums in prime variables, from which one can deduce Vinogradov's three primes theorem. It is known that $\mu(n)$ is disjoint from $e(p(n))$ for $p(n)$ a polynomial (or a sub-polynomial) (see \cite{Dav}, \cite[Chapter 6, Theorem 10]{Hua} and \cite[Chapter 3, Theorem 4.6]{Vino}). Moreover, the upper bound of $\sum_{n=1}^N \mu(n)e(p(n))$ has received much attention (see e.g., \cite{BH}, \cite{JL} and \cite{ZL}) partially due to its close connection with the distribution of the zeros of Dirichlet L-functions.

As we have seen from  Theorem \ref{a large class of functions with zero anqie entropy} that $e(f(n))$ is deterministic when $f(n)$ satisfies that there is a $k\in \mathbb{N}$ such that the $k$-th derivative (or the $k$-th difference for the discrete case, see condition (\ref{a condition for the kth difference}) below) tends to zero. Note that SMDC implies that any deterministic sequence is disjoint from the M\"{o}bius function. Motivated by this, we are interested in the following problem.
\begin{problem}\label{the first problem}
Let $f(n)$ be a real-valued arithmetic function such that
\begin{equation}\label{a condition for the kth difference}
\lim_{n\rightarrow \infty}\|\triangle^{k}f(n)\|=0
\end{equation}
for some natural number $k$. Is $\mu(n)$ disjoint from $e(f(n))$?
\end{problem}

Restrictions on the $k$-th derivative or $k$-th difference of $f(n)$ often appear in nontrivial estimate of the exponential sums $\sum_{n=1}^{N} e(f(n))$. For example, Van der Corput's method and the method of exponential pairs (see e.g., \cite{GK}). The latter one usually requires $\triangle f(n)$ to be approximately $cn^{-s}$ for some $c,s>0$. This condition is included in (\ref{a condition for the kth difference}). In this paper, we focus on the sum $\sum_{n=1}^{N}\mu(n)e(f(n))$ under the restriction (\ref{a condition for the kth difference}).

In the following we investigate Problem \ref{the first problem} without assuming SMDC. For the case $k=0$,
it is obvious that $\mu(n)$ is disjoint from such $e(f(n))$ by the Prime Number Theorem.
For the case $k=1$, we have the following result.
\begin{proposition}\label{k=0}
Suppose that $f(n)$ is a real-valued arithmetic function satisfying
\[\lim_{n\rightarrow \infty}\|\triangle f(n)-c\|=0\]
for some constant $c\in \mathbb{R}$.
Then
\[\lim_{N\rightarrow \infty}\frac{1}{N}\sum_{1\leq n\leq N}\mu(n)e(f(n))=0.\]
\end{proposition}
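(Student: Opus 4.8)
The plan is to deduce the estimate from an orthogonality criterion of the K\'atai and Bourgain--Sarnak--Ziegler type, which reduces Möbius cancellation for a bounded sequence to the smallness of its dilated self-correlations. I set $F(n)=e(f(n))$, so $|F|\le 1$, and write $\eta_n=\triangle f(n)-c$, so that $\|\eta_n\|\to 0$ by hypothesis. The criterion states, roughly, that if $\frac1N\sum_{n\le N}F(pn)\overline F(qn)\to 0$ for all pairs of distinct primes $p,q$ (from a suitable set, with the appropriate uniformity), then $\frac1N\sum_{n\le N}\mu(n)F(n)\to 0$. So I would first reduce the proposition to controlling the correlation sums $C_{p,q}(N):=\frac1N\sum_{n\le N}e\big(f(pn)-f(qn)\big)$.

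The key observation is that these correlations inherit the hypothesis. Setting $\Psi(n)=f(pn)-f(qn)$, one has $\triangle\Psi(n)=\sum_{j=0}^{p-1}\triangle f(pn+j)-\sum_{j=0}^{q-1}\triangle f(qn+j)$, and since each $\triangle f\to c$ modulo $1$ this gives $\|\triangle\Psi(n)-(p-q)c\|\to 0$. Writing $\beta=(p-q)c$, so that $e(\Psi(n+1))=e(\beta)\,e(\Psi(n))\,e(\delta_n)$ with $\delta_n\to 0$, a telescoping (geometric-series) argument yields $(1-e(\beta))\sum_{n\le N}e(\Psi(n))=O(1)+O\big(\sum_{n\le N}|\delta_n|\big)$. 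Since $\delta_n\to 0$ forces $\frac1N\sum_{n\le N}|\delta_n|\to 0$ by Cesàro averaging, I obtain $C_{p,q}(N)\to 0$ whenever $\beta=(p-q)c\notin\mathbb{Z}$. In particular, when $c$ is irrational this holds for \emph{every} pair of distinct primes, and the criterion delivers the proposition directly.

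The main obstacle is the resonant case where $c=a/b$ is rational: for primes with $p\equiv q\pmod b$ one has $(p-q)c\in\mathbb{Z}$, so $e(\Psi(n))$ is merely slowly varying and $C_{p,q}(N)$ need not tend to $0$; since such pairs form a positive proportion, the correlation criterion alone cannot succeed here. To deal with this I would factor $e(f(n))=e(cn)\,e(h(n))$ with $h(n)=f(n)-cn$, so that $\|\triangle h(n)\|\to 0$, and split the sum according to residues modulo $b$. This reduces everything to the slope-zero statement $\frac1N\sum_{n\le N}\mu(n)e(h(n))\to 0$ for $e(h)$ slowly varying along each progression.

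For this slope-zero case a naive summation by parts fails: it only bounds the sum by $\big(\max_{n\le N}|\sum_{m\le n}\mu(m)|\big)$ times the total variation of $e(h(n))$, and although each factor is $o(N)$ their product need not be $o(N)$. Instead I would partition $[1,N]$ into consecutive blocks of length $H=H(N)\to\infty$, chosen slowly enough that $H\cdot\sup_{n\ge N_0}\|\triangle h(n)\|\to 0$; then $e(h(n))$ is within $o(1)$ of a constant on each block, giving $\sum_{n\le N}\mu(n)e(h(n))=\sum_{\text{blocks }I}e(h(x_I))\sum_{n\in I}\mu(n)+o(N)$. The residual quantity $\sum_I\big|\sum_{n\in I}\mu(n)\big|$ is then controlled by the Matom\"aki--Radziwi\l\l\ theorem on cancellation of $\mu$ in almost all short intervals, which applies for $H\to\infty$ arbitrarily slowly and extends to arithmetic progressions, yielding $\sum_I\big|\sum_{n\in I}\mu(n)\big|=o(N)$ for a suitable choice of block offset. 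The hard part is precisely this resonant/slope-zero input: the irrational regime is soft, whereas here the genuine short-interval behaviour of the Möbius function is needed.
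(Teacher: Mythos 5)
Your proposal is correct, but it takes a genuinely different route from the paper's. The paper obtains Proposition \ref{k=0} as an immediate special case of Proposition \ref{k=1 with finite accumulation points} (under your hypothesis $e(\triangle f(n))\rightarrow e(c)$, so there is a single limit point, and $\|\triangle^{2}f(n)\|\rightarrow 0$), and the proof of that proposition makes no rational/irrational distinction: after subtracting a fixed polynomial $f_{0}$, the function is approximated by a concatenation of linear polynomials (Lemma \ref{approximated by polynomials}); Dirichlet approximation (Lemmas \ref{Dirichlet theorem} and \ref{approximation}) produces almost-periods $n_{j}$ of the concatenated linear phase; and these are combined, via a shift-averaging argument, with the second author's bound on self-correlations of $\mu(n)e(P(n))$ in short arithmetic progressions (Lemma \ref{the distribution of mobius in short arithmetic interval}, quoted from \cite{W}). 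You instead split on the arithmetic nature of $c$: for irrational $c$, the K\'atai/Bourgain--Sarnak--Ziegler criterion plus your telescoping of $\triangle\big(f(pn)-f(qn)\big)$ settles the matter with no short-interval technology at all, which is softer than anything used in the paper; for rational $c=a/b$ you correctly observe that the criterion genuinely fails (correlations along pairs $p\equiv q \pmod b$ need not decay), factor out $e(cn)$, pass to progressions mod $b$, and use block-constancy of $e(h(n))$ together with Matom\"{a}ki--Radziwi{\l}{\l} cancellation of $\mu$ in almost all short intervals along progressions, with a pigeonhole choice of block offset --- the same mechanism as in the paper's Lemma \ref{disjointness and distribution in short interval} and Theorem \ref{k=1}. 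Your citation of \cite{KM} for that last input is the right one: what you need is exactly the $k=1$ case of equation (\ref{0415}), which the paper itself notes follows from \cite{KM}, and you need it for arbitrarily slowly growing block length $H$, so the paper's Lemma \ref{the distrbution of moebius in short interval} (which requires $H\geq \exp((\log X)^{\tau})$) would not suffice. As for what each approach buys: yours isolates the genuine difficulty in the resonant rational case and is elementary everywhere else; the paper's is uniform in $c$, proves the strictly stronger Proposition \ref{k=1 with finite accumulation points} (quadratic main term and finitely many limit points of $e(\triangle f)$), and builds machinery that is reused for Theorem \ref{k=1rapiddecreasingspeed}. One cosmetic simplification to your write-up: there is no need to let $H$ depend on $N$; since $\sup_{n\geq M}\|\triangle h(n)\|\rightarrow 0$ for each fixed $H$, you can fix $H$ large, let $N\rightarrow\infty$, and then let $H\rightarrow\infty$.
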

In fact, we have the following general result.
\begin{proposition}\label{k=1 with finite accumulation points}
Suppose that $f(n)$ is a real-valued arithmetic function satisfying that the set $\{e(\triangle f(n)):n=0,1,\ldots\}$ has finitely many limit points, and
\[\lim_{n\rightarrow \infty}\|\triangle^2 f(n)-c\|=0\]
for some constant $c\in \mathbb{R}$.
Then
\[\lim_{N\rightarrow \infty}\frac{1}{N}\sum_{1\leq n\leq N}\mu(n)e(f(n))=0.\]
\end{proposition}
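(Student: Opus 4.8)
The plan is to reduce the whole statement to Proposition \ref{k=0} by extracting the exact structure that the two hypotheses force on $\triangle f$. Writing $z_n = e(\triangle f(n))$ on the unit circle, the condition $\|\triangle^2 f(n)-c\|\to 0$ says precisely that the consecutive ratios $z_{n+1}/z_n = e(\triangle^2 f(n))$ converge to $e(c)$. Hence, if $w$ is a limit point of the sequence $(z_n)$, say $z_{n_j}\to w$, then $z_{n_j+1}=(z_{n_j+1}/z_{n_j})z_{n_j}\to e(c)w$, so $e(c)w$ is again a limit point. Since the set $L$ of limit points is finite and nonempty and $z\mapsto e(c)z$ is injective, multiplication by $e(c)$ permutes $L$; some power is the identity on $L$, so $e(c)$ is a root of unity and $c\in\mathbb{Q}$, say $c=a/q$ in lowest terms. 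This is the conceptual heart: the finiteness hypothesis excludes an irrational leading behaviour (which would produce a genuinely quadratic, Weyl-equidistributed phase and require quadratic M\"obius estimates) and confines us to the linear regime covered by Proposition \ref{k=0}.

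Next I would upgrade this to a clean asymptotic for $\triangle f$. Over $q$ steps, $z_{n+q}/z_n = e\big(\sum_{i=0}^{q-1}\triangle^2 f(n+i)\big)\to e(qc)=1$, so $z_{n+q}-z_n\to 0$. Restricting to a fixed residue class modulo $q$ gives a sequence whose consecutive gaps tend to $0$ and whose limit set is contained in the finite set $L$; a bounded sequence in a compact metric space with consecutive gaps tending to $0$ and finite limit set must converge, so $z_n\to\zeta_{n_0}$ for $n\equiv n_0\pmod q$. Letting $n\to\infty$ in $z_{n+1}/z_n\to e(c)$ along each class forces $\zeta_{n_0+1}/\zeta_{n_0}=e(c)$, i.e. $\zeta_{n_0}=\zeta_0 e(n_0 c)$, so
\[
\triangle f(n)\equiv nc+\beta_0+\delta_n \pmod 1,\qquad \delta_n\to 0,
\]
with $\zeta_0=e(\beta_0)$ and $\delta_n$ taken as the representative in $(-\tfrac12,\tfrac12]$. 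Summing from $0$ to $n-1$ then yields $e(f(n))=e(f(0))\,e\big(\tfrac{c}{2}n^2\big)\,e\big((\beta_0-\tfrac{c}{2})n\big)\,e(E_n)$, where $E_n=\sum_{j<n}\delta_j$ is real and $E_{n+1}-E_n=\delta_n\to 0$.

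The final step uses the rationality of $c$ to linearize. Because $c=a/q$, the factor $e(\tfrac{c}{2}n^2)$ is periodic in $n$ of period $2q$, so its finite discrete Fourier expansion $e(\tfrac{c}{2}n^2)=\sum_{t=0}^{2q-1}\hat c_t\,e(tn/2q)$ gives
\[
e(f(n))=e(f(0))\sum_{t=0}^{2q-1}\hat c_t\,e(\gamma_t n)\,e(E_n),\qquad \gamma_t=\beta_0-\tfrac{c}{2}+\tfrac{t}{2q}.
\]
For each fixed $t$ I set $\tilde f_t(n)=\gamma_t n+E_n$, which is real-valued with $\triangle\tilde f_t(n)=\gamma_t+\delta_n$, hence $\|\triangle\tilde f_t(n)-\gamma_t\|=\|\delta_n\|\to 0$. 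Proposition \ref{k=0} then gives $\frac1N\sum_{n\le N}\mu(n)e(\tilde f_t(n))\to 0$, and averaging the finitely many terms yields $\frac1N\sum_{n\le N}\mu(n)e(f(n))\to 0$.

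I expect the obstacle to lie in the structural first two steps rather than the analytic one. The delicate points are the passage from ``consecutive ratios converge and the limit set is finite'' to ``$c$ rational,'' and then to the explicit form $\triangle f(n)\equiv nc+\beta_0+\delta_n$; both require careful bookkeeping of limit points on the circle together with the convergence lemma, and one must verify that the real representatives $\delta_n$ make $E_n$ and each $\tilde f_t$ genuinely real so that Proposition \ref{k=0} applies verbatim. Once the phase is linearized, the disjointness is wholly inherited from Proposition \ref{k=0}, which is exactly why this is the natural common generalization announced by ``in fact, we have the following general result.''
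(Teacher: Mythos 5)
Your argument has one genuine gap, and it is not where you expected it: the appeal to Proposition \ref{k=0} is circular within this paper. Section \ref{disjointness} states that ``Proposition \ref{k=0} directly follows from Proposition \ref{k=1 with finite accumulation points}'' and then proves only the latter; no independent proof of Proposition \ref{k=0} appears anywhere in the paper. So the one result your reduction rests on is itself available only as a corollary of the statement you are trying to prove. Nor can this be patched by a soft argument, because Proposition \ref{k=0} carries the entire analytic content of the problem: after your linearization the frequencies $\gamma_t=\beta_0-\frac{c}{2}+\frac{t}{2q}$ are in general irrational (only $c$, not $\beta_0$, is forced to be rational), and $E_n$ is just some real sequence with $\triangle E_n\to0$, so what is needed is disjointness of $\mu(n)$ from $e(\gamma n+E_n)$ with $\gamma$ irrational and $E_n$ slowly varying --- a statement that requires short-interval/self-correlation machinery (in this paper's toolkit it would come from Lemma \ref{the distribution of mobius in short arithmetic interval} applied with a degree-one polynomial, together with Lemmas \ref{approximated by polynomials} and \ref{subsequence is equivalent all natural number}, or from Matom\"{a}ki--Radziwi{\l}{\l}-type theorems), not from classical exponential-sum estimates.

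That said, the structural half of your proposal is correct and genuinely different from what the paper does. The observation that multiplication by $e(c)$ permutes the finite, nonempty limit set of $\{e(\triangle f(n))\}$, so that $e(c)$ is a root of unity and $c\in\mathbb{Q}$; the convergence of $e(\triangle f(n))e(-nc)$ via connectedness of the limit set of a sequence with vanishing gaps along each residue class mod $q$; the resulting expansion $\triangle f(n)\equiv nc+\beta_0+\delta_n\pmod 1$ with $\delta_n\to0$; and the finite Fourier expansion of the $2q$-periodic factor $e(\frac{c}{2}n^2)$ --- all of these steps check out, and together they show the proposition is actually \emph{equivalent} to Proposition \ref{k=0}, a fact the paper never notices: its hypotheses force $c$ to be rational, but its proof neither observes nor exploits this. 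Instead, the paper keeps the quadratic phase $f_0(n)=\frac{c}{2}n^2+c_1n+c_0$ for arbitrary real $c$, approximates $f-f_0$ by concatenations of linear phases (Lemma \ref{approximated by polynomials}), makes that concatenation asymptotically almost periodic along a sequence of shifts using Dirichlet's theorem and the finite-limit-point hypothesis (Lemma \ref{approximation}), and finishes with the self-correlation estimate for $\mu(n)e(P(n))$ in short arithmetic progressions (Lemma \ref{the distribution of mobius in short arithmetic interval}, quoted from \cite{W}). If you supplement your reduction with an independent proof of Proposition \ref{k=0} along any of the lines above, you obtain a complete proof that bypasses quadratic phases entirely; as written, however, the analytic heart of the proposition is left unproven.
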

For the case $k\geq 2$, we obtain the following result.
\begin{theorem}\label{k=1rapiddecreasingspeed}
Let $\tau\in (5/8,1)$ and $k\geq 2$. Let $f(n)$ be a real-valued arithmetic function satisfying when $n$ large enough,
\begin{equation}\label{basic relation}
\|\triangle^kf(n)\|\leq \frac{C}{\exp((\log n)^{\tau})}
\end{equation}
for some positive constant $C$.
Then
\[\lim_{N\rightarrow \infty}\frac{1}{N}\sum_{1\leq n\leq N}\mu(n)e(f(n))=0.\]
\end{theorem}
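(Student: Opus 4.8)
The plan is to use the rapid decay of $\|\triangle^{k}f(n)\|$ to replace $f$ by an honest polynomial of degree $k-1$ on moderately long windows, and then to attack the resulting M\"obius sums twisted by polynomial phases by the circle method, handling the minor arcs by Weyl/van der Corput estimates and the major arcs by short-interval mean-value information for $\mu$.

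First I would fix a window length $H=\exp\!\big(c(\log N)^{\tau}\big)$ with $c=c(k)>0$ small, so that $\log H\asymp(\log N)^{\tau}$ and $H=N^{o(1)}$, and partition $\{1,\dots,N\}$ into consecutive windows $I=[n_{0},n_{0}+H)$. On each window I would replace $f$ by its degree-$(k-1)$ Newton polynomial $P_{I}(n)=\sum_{j=0}^{k-1}\binom{n-n_{0}}{j}\triangle^{j}f(n_{0})$. The gain comes from the exact finite-difference Taylor remainder
\[
f(n_{0}+h)-P_{I}(n_{0}+h)=\sum_{0\le i\le h-k}\binom{h-1-i}{k-1}\,\triangle^{k}f(n_{0}+i):
\]
every binomial coefficient is an integer and each $\triangle^{k}f(n_{0}+i)$ lies within $\lambda:=C\exp(-(\log N)^{\tau})$ of $\mathbb{Z}$, so modulo $1$ the remainder is $\ll\lambda\binom{H}{k}\ll\lambda H^{k}$. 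With the above choice of $H$ this is $o(1)$, whence $\sum_{n\in I}\lvert e(f(n))-e(P_{I}(n))\rvert\ll H\lambda H^{k}$; summing over the $N/H$ windows costs only $N\lambda H^{k}=o(N)$. This reduces Theorem \ref{k=1rapiddecreasingspeed} to the estimate $\sum_{I}\big|\sum_{n\in I}\mu(n)e(P_{I}(n))\big|=o(N)$, that is, to cancellation between $\mu$ and a real polynomial phase of degree $k-1\ge1$ in windows of sub-polynomial length.

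For a single window I would expand $\sum_{n\in I}\mu(n)e(P(n))$ by Vaughan's identity into Type I sums $\sum_{d}a_{d}\sum_{m}e(P(dm))$ and Type II sums $\sum_{d}\sum_{m}a_{d}b_{m}e(P(dm))$ with coefficients of size $H^{o(1)}$. When the leading coefficients of $P$ lie on the minor arcs, repeated Cauchy--Schwarz and Weyl differencing reduce the inner phases to linear ones governed by the top coefficient of $P$, and Weyl's inequality together with van der Corput's method (the method of exponential pairs recalled in the introduction) yields power-type cancellation in $H$; this is comfortable for any window length tending to infinity. On the major arcs $P$ is close to a rational polynomial with small denominator $q$, and the sum collapses to short-interval averages $\sum_{\substack{n\in I\\ n\equiv r\,(q)}}\mu(n)$. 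Here per-window cancellation is not available for sub-polynomial $H$, but since I only need $\sum_{I}|\cdot|=o(N)$ it suffices to control these contributions on average over the window locations, which is exactly the regime governed by short-interval mean-value estimates for $\mu$ twisted by Dirichlet characters (a Matom\"aki--Radziwi\l\l-type input, or zero-density/large-sieve estimates).

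The main obstacle, and the source of the hypothesis $\tau>5/8$, is precisely this averaged major-arc estimate. Because the windows have only sub-polynomial length $H=N^{o(1)}$, the requisite cancellation for $\mu$ in short intervals cannot come from the Prime Number Theorem but must be extracted from the distribution of zeros of the relevant $L$-functions, and the available zero-density and mean-value technology delivers the needed saving only once $\log H$ exceeds a fixed multiple of $(\log N)^{5/8}$. Since the windowing forces $\log H\asymp(\log N)^{\tau}$, this is exactly the constraint $\tau>5/8$; the exponent reflects the present strength of these analytic estimates rather than any soft obstruction, and the low-degree cases dovetail with Propositions \ref{k=0} and \ref{k=1 with finite accumulation points}. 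Granting the major-arc input and combining it with the minor-arc bounds gives $\sum_{I}\big|\sum_{n\in I}\mu(n)e(P_{I}(n))\big|=o(N)$, and together with the first reduction this yields $\frac1N\sum_{n\le N}\mu(n)e(f(n))\to0$, completing the proof.
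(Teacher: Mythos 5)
Your first reduction is sound and matches the paper: on windows of length $H=\exp(c(\log N)^{\tau})$ you replace $f$ by its degree-$(k-1)$ interpolation polynomial, and the integrality of the finite-difference coefficients makes the error $\ll\lambda H^{k}$ modulo $1$; this is exactly the role of Proposition \ref{a lemma used in section 5} and Lemma \ref{approximated by polynomials} in the paper. The genuine gap is in how you propose to prove the core estimate $\sum_{I}\big|\sum_{n\in I}\mu(n)e(P_{I}(n))\big|=o(N)$. Your minor-arc step cannot work: to exploit Weyl/van der Corput cancellation for $\mu$-twisted sums you must first pass through a bilinear decomposition (Vaughan or Heath-Brown), and in that decomposition the ranges of the divisor variables are dictated by $N$, not by $H$. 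Since $H=N^{o(1)}$, a Type I or Type II sum $\sum_{d}\sum_{m:\,dm\in I}$ has inner sums of length $H/d+O(1)$, which is $O(1)$ for all but a negligible range of $d$; there is simply no room for cancellation inside a window of sub-polynomial length, and this failure is precisely why nontrivial per-interval bounds for $\sum_{x\le n<x+H}\mu(n)e(P(n))$ (indeed even for $P=0$) are unknown for $H=x^{o(1)}$. So the dichotomy ``minor arcs handled per window, major arcs handled on average'' collapses: \emph{both} regimes must be treated on average over window locations, uniformly in the (window-dependent) polynomial, and that statement is not classical circle-method plus zero-density technology --- it is exactly the recent theorem of Matom\"{a}ki--Radziwi{\l}{\l}--Tao--Ter\"{a}v\"{a}inen--Ziegler (\cite[Theorems 1.3 and 1.8]{MRT20}), which is the paper's key external input (Lemma \ref{a lemma from fourier uniformity}, combined with the non-pretentiousness of $\mu$ in Proposition \ref{Halasz bound} to yield Lemma \ref{the distrbution of moebius in short interval}). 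Your sentence ``Granting the major-arc input and combining it with the minor-arc bounds'' therefore conceals essentially the entire difficulty of the theorem.

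Two further, smaller omissions: the averaged input must carry the supremum over all real polynomials of degree less than $k$ \emph{inside} the integral (the polynomial $P_{I}$ changes from window to window, so an estimate for each fixed polynomial, or even uniform in its coefficients on a fixed arc decomposition, does not chain together); and after the windowed estimate one still needs to pass from averages over blocks $[N_{i},N_{i+1})$ to the full limit $\frac1N\sum_{n\le N}$, for which the paper uses the classical Davenport--Hua bound $\sup_{\deg p<k}\big|\sum_{n\le N}\mu(n)e(p(n))\big|\ll N/\log N$ to control the last partial block (Lemma \ref{subsequence is equivalent all natural number}). Finally, your attribution of the exponent $5/8$ to zero-density limitations on the major arcs is speculative; in the paper it enters only as the threshold $H\geq\exp((\log X)^{\tau})$, $\tau>5/8$, required by \cite{MRT20}, and the window lengths $L_{m}$ are chosen in the proof of Theorem \ref{k=1rapiddecreasingspeed} precisely so that the resulting block lengths satisfy $N_{i+1}-N_{i}\geq\exp((\log i)^{\tau-\epsilon})$ with $\tau-\epsilon>5/8$, matching that hypothesis.
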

The first ingredient of our proof of the above result is that if the $k$-th difference of $f(n)$ decays to zero, then $e(f(n))$ can be approximated uniformly by certain concatenations of polynomial phases (see Lemma \ref{approximated by polynomials}). The second one is Matom\"{a}ki-Radziwi{\l}{\l}-Tao-Ter\"{a}v\"{a}inen-Ziegler's estimate \cite{MRT20} on averages of the correlation of multiplicative functions with polynomial phases in short intervals.

In the following, we provide a sufficient condition, which is weaker than SMDC (see Corollary \ref{in terms of distribution of polynomial phases}), for Problem \ref{the first problem}.
\begin{proposition}\label{sufficient}
Let $k$ be a given positive integer. Denote $\mathcal{D}_{k}$ by the set of all polynomials in $\mathbb{R}[y]$ of degrees less than $k$. Assume that the following estimate holds,
\begin{equation}\label{conjecture about uniform fourier coefficient}
\lim_{h\rightarrow \infty}\limsup_{X\rightarrow \infty}\frac{1}{Xh}\int_{X}^{2X} \sup_{p(y)\in \mathcal{D}_{k}}\Bigg|\sum_{x\leq n<x+h}\mu(n)e(p(n))\Bigg|dx=0.
\end{equation}
Then for any $f(n)$ satisfying
$\lim_{n\rightarrow \infty}\|\triangle^{k}f(n)\|=0$,
we have
\[\lim_{N\rightarrow\infty}\frac{1}{N}\sum_{1\leq n\leq N}\mu(n)e(f(n))=0.\]
\end{proposition}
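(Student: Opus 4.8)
The plan is to localize $\sum_{n\le N}\mu(n)e(f(n))$ to short windows of a fixed length $h$, show that on each window $e(f(n))$ is uniformly close to a polynomial phase of degree $<k$, and then feed the resulting window sums into hypothesis (\ref{conjecture about uniform fourier coefficient}). The first step is a window approximation coming from the discrete Taylor (Newton forward difference) formula. Fix an integer $h\ge 1$ and a real $x$, and let $x_0=\lceil x\rceil$ be the least integer in $[x,x+h)$. For every integer $m\ge 0$,
\[
f(x_0+m)=\sum_{j=0}^{k-1}\binom{m}{j}\triangle^j f(x_0)+\sum_{i=0}^{m-k}\binom{m-1-i}{k-1}\triangle^k f(x_0+i).
\]
Viewed as a function of $n=x_0+m$, the first sum is a polynomial $q_x(n)$ of degree $<k$, so $q_x\in\mathcal D_k$. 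Since the weights $\binom{m-1-i}{k-1}$ are nonnegative integers, the subadditivity of $\|\cdot\|$ together with $\|c\theta\|\le c\|\theta\|$ for $c\in\N$, and the hockey-stick identity $\sum_{i=0}^{m-k}\binom{m-1-i}{k-1}=\binom mk$, give
\[
\|f(n)-q_x(n)\|\le\binom hk\sup_{n'\ge x}\|\triangle^k f(n')\|\qquad(x\le n<x+h),
\]
whence $|e(f(n))-e(q_x(n))|\le 2\pi\binom hk\,\eta(x)$ on the window, where $\eta(x):=\sup_{n'\ge x}\|\triangle^k f(n')\|\to 0$ by the assumption $\|\triangle^k f\|\to 0$.

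Next I would pass from a dyadic block to the window integral appearing in the hypothesis. A Fubini/counting computation, with the $O(h)$ boundary values of $n$ contributing the error, gives
\[
h\sum_{X\le n<2X}\mu(n)e(f(n))=\int_X^{2X}\Big(\sum_{x\le n<x+h}\mu(n)e(f(n))\Big)dx+O(h^2).
\]
Replacing $e(f(n))$ by $e(q_x(n))$ inside each window, bounding the resulting sum by $\sup_{p\in\mathcal D_k}\big|\sum_{x\le n<x+h}\mu(n)e(p(n))\big|$, and estimating the swap by the window bound above, I obtain for $X$ large
\[
\frac1X\Big|\sum_{X\le n<2X}\mu(n)e(f(n))\Big|\le \frac{1}{Xh}\int_X^{2X}\sup_{p\in\mathcal D_k}\Big|\sum_{x\le n<x+h}\mu(n)e(p(n))\Big|dx+2\pi\binom hk\eta(X)+O\!\Big(\frac hX\Big).
\]

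Then comes the order of limits. For fixed $h$ I let $X\to\infty$: since $\eta(X)\to0$ and $h$ is fixed, the $\binom hk\eta(X)$ and $O(h/X)$ terms vanish, leaving
\[
\limsup_{X\to\infty}\frac1X\Big|\sum_{X\le n<2X}\mu(n)e(f(n))\Big|\le \limsup_{X\to\infty}\frac{1}{Xh}\int_X^{2X}\sup_{p\in\mathcal D_k}\Big|\sum_{x\le n<x+h}\mu(n)e(p(n))\Big|dx.
\]
Sending $h\to\infty$ and invoking (\ref{conjecture about uniform fourier coefficient}) forces the right-hand side, and hence the $h$-independent left-hand side, to $0$, so $\sum_{X\le n<2X}\mu(n)e(f(n))=o(X)$. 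A routine dyadic summation over the ranges $[X,2X]$ with $X\le N$ then upgrades this to $\sum_{n\le N}\mu(n)e(f(n))=o(N)$, as required.

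The main obstacle is precisely this ordering of limits. The window-approximation error carries a factor $\binom hk\asymp h^{k}$ that blows up as $h\to\infty$, so one cannot enlarge $h$ while keeping $\eta$ fixed. The shape of hypothesis (\ref{conjecture about uniform fourier coefficient}), which takes $\limsup_{X}$ before $\lim_{h}$, is exactly what allows me to first exploit $\eta(X)\to0$ at each fixed $h$ and only afterwards let $h\to\infty$; getting this interchange right is the crux of the argument, everything else being standard.
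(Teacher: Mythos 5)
Your proof is correct, and it takes a genuinely different route from the paper's. The paper assembles the proposition from three reusable pieces: Lemma \ref{approximated by polynomials} (uniform approximation of $e(f(n))$ by exponentials of \emph{concatenations} of degree-$<k$ polynomials over blocks $[N_i,N_{i+1})$ of growing length, built by Lagrange interpolation of the first $k$ values of $f$ on each block), Lemma \ref{disjointness and distribution in short interval} (the hypothesis (\ref{conjecture about uniform fourier coefficient}) is equivalent to cancellation of block averages for all such concatenations), and Lemma \ref{subsequence is equivalent all natural number} (block averages upgrade to the full average $\frac1N\sum_{n\le N}$), the last of which invokes the classical Davenport--Hua bound $\sup_{\deg p<k}\big|\sum_{n\le N}\mu(n)e(p(n))\big|\ll N/\log N$ to control the final partial block. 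You bypass all of this machinery: your sliding-window approximation via the Newton forward-difference formula with remainder produces, for each fixed window length $h$, a polynomial $q_x\in\mathcal{D}_k$ with error $\binom{h}{k}\eta(x)$ (your identity and the bound $\|c\theta\|\le c\|\theta\|$ for $c\in\N$ are both valid), and the Fubini identity relating the dyadic sum over $[X,2X)$ to the window integral, the careful ordering of limits (first $X\to\infty$ at fixed $h$, killing $\binom{h}{k}\eta(X)$; then $h\to\infty$, invoking the hypothesis), and a dyadic summation complete the proof. Your route is more elementary and self-contained---in particular it needs no input from \cite{Dav,Hua} and no concatenation formalism---and it isolates cleanly the structural point that makes the statement true, namely that the hypothesis takes $\limsup_{X}$ \emph{inside} $\lim_{h}$; in the paper this same point is present but hidden in the freedom to start the approximating blocks arbitrarily late. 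What the paper's longer route buys is shared infrastructure: Lemmas \ref{approximated by polynomials} and \ref{subsequence is equivalent all natural number} also drive Proposition \ref{k=1 with finite accumulation points} and Theorem \ref{k=1rapiddecreasingspeed}, and Lemma \ref{disjointness and distribution in short interval} drives Theorem \ref{SMDC implies the distribution of moebius in short interval}, whereas your argument is tailored to this single proposition.
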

\begin{remark}
{\rm All the results listed in this section also hold if $\mu$ is replaced by a more general ``non-pretentious'' 1-bounded (i.e., the $l^{\infty}$-norm is bounded by $1$) multiplicative functions, such as the Liouville function and $\mu(n)\chi(n)$, where $\chi$ is a given Dirichlet character.}
\end{remark}

\section{Preliminaries on anqie entropy of arithmetic functions}
To use tools from operator algebra to study Sarnak's conjecture, Ge introduced the anqie entropy for arithmetic functions \cite{Ge}. For a bounded arithmetic function $f$, its anqie entropy equals the infimum of the topological entropy of all possible realizations of $f$. For functions with finite ranges, the anqie entropy of such a function $f$ is determined by the number of different $J$-blocks appearing in the sequence $\{f(n)\}_{n=0}^{\infty}$.
Specifically, let $\mathcal{B}_{J}(f)$ denote the set of all $J$-blocks occurring in $f$, i.e.,
$\mathcal{B}_{J}(f)=\{(f(n),f(n+1),\ldots,f(n+J-1)):n\geq 0\}$,
then the anqie entropy of $f(n)$ equals
\begin{equation}\label{formula in introduction}
\lim_{J\rightarrow \infty}\frac{\log|\mathcal{B}_{J}(f)|}{J},
\end{equation}
where $|\mathcal{B}_{J}(f)|$ is the cardinality of the set $\mathcal{B}_{J}(f)$ (\cite[Lemma 6.1]{Wei1}).
\subsection{Some properties of anqie entropy}\label{some properties of anqie entropy}
The anqie entropy has many nice properties. Here we list some properties which will be used in this paper.
The following one is about the algebraic operations. Here and in the
sequel, we use $\AE(f)$ to denote the anqie entropy of any bounded arithmetic function $f(n)$.
\begin{proposition}\label{some properties about anqie entropy}
For any bounded arithmetic functions $f,g$ and continuous function $\phi(z)$ in $\mathbb{C}$, we have
\[
\left|{\AE}(f)-{\AE}(g)\right|\leq {\AE}(f\pm g)\leq {\AE}(f)+{\AE}(g),
\]
\[
{\AE}(f\cdot g)\leq {\AE}(f)+{\AE}(g),\;\;\;\AE(\phi(f))\leq \AE(f).
\]
\end{proposition}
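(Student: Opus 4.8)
The plan is to argue directly from the characterization of anqie entropy as the infimum of the topological entropy over all realizations, rather than from the combinatorial formula (\ref{formula in introduction}); this avoids any passage through finite-range approximations. The two structural facts I would rely on are the following. First, if $f$ is realized in $(\mathcal{X},T)$ by a continuous $F$, so that $f(n)=F(T^n x_0)$, and $\phi$ is continuous on $\mathbb{C}$, then $\phi(f)$ is realized in the \emph{same} system $(\mathcal{X},T)$ through the continuous map $\phi\circ F$; since $F(\mathcal{X})$ is compact, $\phi(f)$ is automatically bounded. As every realization of $f$ thus produces a realization of $\phi(f)$ on the identical underlying system, and hence with the identical topological entropy, taking the infimum over all realizations of $f$ yields $\AE(\phi(f))\le \AE(f)$ at once. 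Second, given realizations of $f$ in $(\mathcal{X}_1,T_1)$ and of $g$ in $(\mathcal{X}_2,T_2)$, the product $(\mathcal{X}_1\times\mathcal{X}_2,\,T_1\times T_2)$ is again a compact Hausdorff system and simultaneously realizes $f\pm g$ and $f\cdot g$, via the continuous maps $(x,y)\mapsto F_1(x)\pm F_2(y)$ and $(x,y)\mapsto F_1(x)F_2(y)$.

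For the subadditive bounds $\AE(f\pm g)\le \AE(f)+\AE(g)$ and $\AE(f\cdot g)\le \AE(f)+\AE(g)$, I would combine the product realization above with the subadditivity of topological entropy under products, $h_{\mathrm{top}}(T_1\times T_2)\le h_{\mathrm{top}}(T_1)+h_{\mathrm{top}}(T_2)$. Fixing $\varepsilon>0$ and choosing realizations with $h_{\mathrm{top}}(T_1)\le \AE(f)+\varepsilon$ and $h_{\mathrm{top}}(T_2)\le \AE(g)+\varepsilon$, the product system realizes $f\pm g$ (respectively $f\cdot g$) with topological entropy at most $\AE(f)+\AE(g)+2\varepsilon$; letting $\varepsilon\to 0$ gives the desired inequalities.

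The remaining two-sided bound $|\AE(f)-\AE(g)|\le \AE(f\pm g)$ then follows formally, introducing no new dynamics. Applying the $\phi$-estimate to $z\mapsto -z$ (and to its inverse) gives $\AE(-g)=\AE(g)$. Writing $f=(f+g)+(-g)$ and $g=(f+g)+(-f)$ and invoking subadditivity produces $\AE(f)-\AE(g)\le \AE(f+g)$ and $\AE(g)-\AE(f)\le \AE(f+g)$, which is the $f+g$ case; the $f-g$ case is identical, using $f=(f-g)+g$ and $g=f+\bigl(-(f-g)\bigr)$.

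I expect the only genuine content, and hence the step to be pinned down with care, to be the product inequality $h_{\mathrm{top}}(T_1\times T_2)\le h_{\mathrm{top}}(T_1)+h_{\mathrm{top}}(T_2)$ in the compact Hausdorff (not necessarily metrizable) setting in which realizations are defined here. I would establish it through the Adler--Konheim--McAndrew open-cover definition of topological entropy: for open covers $\mathcal{U}$ of $\mathcal{X}_1$ and $\mathcal{V}$ of $\mathcal{X}_2$, the rectangles $\mathcal{U}\times\mathcal{V}$ form a cofinal family of open covers of the product, one has $(T_1\times T_2)^{-i}(\mathcal{U}\times\mathcal{V})=T_1^{-i}\mathcal{U}\times T_2^{-i}\mathcal{V}$, and the minimal-subcover count is submultiplicative under products; this yields $h(T_1\times T_2,\mathcal{U}\times\mathcal{V})\le h(T_1,\mathcal{U})+h(T_2,\mathcal{V})$, and taking suprema over covers gives the claim. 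With this standard fact in hand, every assertion of the proposition reduces to the short realization arguments above.
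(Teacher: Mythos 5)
Your proposal is correct, but note that the paper itself contains no proof of this proposition: it is stated and then referred to \cite[Section 4]{Ge} and \cite{Wei1}, so the comparison is with the arguments in those references rather than with a proof in the text. Your route --- taking the paper's characterization of $\AE$ as the infimum of topological entropy over all realizations, realizing $\phi(f)$ in the \emph{same} system via $\phi\circ F$, realizing $f\pm g$ and $f\cdot g$ in the product system $(\mathcal{X}_1\times\mathcal{X}_2, T_1\times T_2)$, invoking subadditivity of topological entropy under products, and then obtaining the two-sided bound $|\AE(f)-\AE(g)|\le \AE(f\pm g)$ by the formal manipulation using $\AE(-h)=\AE(h)$ --- is in the same spirit as the cited references, which however work with canonical realizations (Gelfand spectra of shift-invariant $C^*$-algebras) and factor maps rather than the bare infimum characterization; your version is arguably cleaner given that the paper has already recorded the infimum characterization. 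The one step that carries genuine content, as you correctly identify, is $h_{\mathrm{top}}(T_1\times T_2)\le h_{\mathrm{top}}(T_1)+h_{\mathrm{top}}(T_2)$ for compact Hausdorff, possibly non-metrizable, systems. There your sketch glosses over the cofinality claim: given a finite open cover $\mathcal{W}$ of $\mathcal{X}_1\times\mathcal{X}_2$, one must produce open covers $\mathcal{U}$ of $\mathcal{X}_1$ and $\mathcal{V}$ of $\mathcal{X}_2$ such that every nonempty rectangle $U\times V$ with $U\in\mathcal{U}$, $V\in\mathcal{V}$ lies inside a single member of $\mathcal{W}$ (it is not enough to refine $\mathcal{W}$ by finitely many rectangles, since $\mathcal{U}\times\mathcal{V}$ also contains all the mismatched products). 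In the absence of a metric this follows from the canonical uniformity of a compact Hausdorff space: choose a Lebesgue entourage for $\mathcal{W}$, shrink it to a product entourage $E_1\times E_2$, and take $\mathcal{U}$, $\mathcal{V}$ to consist of $E_1$-small and $E_2$-small open sets respectively. With that detail supplied, every assertion of the proposition follows from your realization arguments, so the proposal is a complete and valid substitute for the omitted proof.
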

The next one is about the lower semi-continuity of anqie entropy.
\begin{proposition}\label{semicontinuity}
If $\{f_{N}(n)\}_{N=0}^{\infty}$ is a sequence of bounded arithmetic functions converging to $f(n)$ uniformly with respect to $n\in \mathbb{N}$, then $\liminf_{N\rightarrow \infty}\AE(f_{N})\geq \AE(f)$.
\end{proposition}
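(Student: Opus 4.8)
The plan is to prove the equivalent statement $\AE(f)\le \liminf_{N\to\infty}\AE(f_N)$. Before anything else it is worth noting that the subadditive estimate from Proposition \ref{some properties about anqie entropy} is useless here: writing $\AE(f)\le \AE(f_N)+\AE(f-f_N)$ would suffice if $\AE(f-f_N)\to0$, but this fails in general. Indeed, if $s$ is a $\pm1$ sequence with $\AE(s)>0$ (e.g.\ one whose orbit closure is the full $2$-shift) and $\varepsilon_N\downarrow0$, then $\varepsilon_N s\to0$ uniformly while $\AE(\varepsilon_N s)=\AE(s)$ for all $N$, as one sees by applying $\AE(\phi(\cdot))\le\AE(\cdot)$ to the scalings $z\mapsto \varepsilon_N z$ and $z\mapsto z/\varepsilon_N$. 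So the proof must use the actual definition of $\AE$, not the triangle inequality.

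First I would record a Bowen--Dinaburg type formula for $\AE$. Since $f_N\to f$ uniformly with $f$ bounded, all these functions are bounded by a common $M$, so they live in the full shift on $\{z\in\C:|z|\le M\}$. Let $\omega_f=(f(0),f(1),\dots)$ and let $X_f$ be the closure of its forward $\sigma$-orbit. The subshift $(X_f,\sigma)$ with the zeroth-coordinate observable is a realization of $f$; conversely, any realization $(\mathcal X,T,F,x_0)$ factors onto $(X_f,\sigma)$ via the equivariant map $x\mapsto (F(T^n x))_{n\ge0}$, and topological entropy does not increase under factor maps. Combined with the infimum-over-realizations description of $\AE$ quoted in Section \ref{some properties of anqie entropy}, this gives $\AE(f)=h_{\mathrm{top}}(X_f,\sigma)$. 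Feeding this subshift into the Bowen--Dinaburg formula and using density of the orbit of $\omega_f$ in $X_f$, I would obtain
\[
\AE(f)=\lim_{\varepsilon\to0^+}\;\limsup_{J\to\infty}\frac{1}{J}\log N_f(J,\varepsilon),
\]
where $N_f(J,\varepsilon)$ is the largest number of indices whose length-$J$ blocks of $f$ are pairwise $\varepsilon$-separated in the sup-metric. The quantity $\limsup_J \tfrac1J\log N_f(J,\varepsilon)$ is non-increasing in $\varepsilon$ and increases to $\AE(f)$ as $\varepsilon\to0^+$.

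The comparison step is then immediate, and is exactly where uniform convergence is used. Fix $\varepsilon>0$ and $\delta<\varepsilon/2$, and pick $N_0$ with $\sup_n|f_N(n)-f(n)|\le\delta$ for all $N\ge N_0$. If indices $n_1,\dots,n_r$ give pairwise $\varepsilon$-separated $J$-blocks of $f$, then for each pair some coordinate differs by more than $\varepsilon$, so the corresponding coordinate of $f_N$ differs by more than $\varepsilon-2\delta$; hence the same indices give pairwise $(\varepsilon-2\delta)$-separated $J$-blocks of $f_N$, whence $N_f(J,\varepsilon)\le N_{f_N}(J,\varepsilon-2\delta)$. Taking $\tfrac1J\log$, then $\limsup_J$, and invoking monotonicity in the scale gives $\limsup_J \tfrac1J\log N_f(J,\varepsilon)\le \AE(f_N)$ for every $N\ge N_0$. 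Taking $\liminf_N$ and then letting $\varepsilon\to0^+$ yields $\AE(f)\le\liminf_N\AE(f_N)$.

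I expect the main obstacle to be the first step, namely pinning down the Bowen--Dinaburg characterization: verifying that the infimum of topological entropy over realizations is bounded below by $h_{\mathrm{top}}(X_f)$ (the factor-map argument) and that this entropy may be computed by counting $\varepsilon$-separated finite blocks occurring in the sequence $f$ itself. The factor-map direction and the orbit-closure realization together give the equality $\AE(f)=h_{\mathrm{top}}(X_f)$; the only genuine care needed is that separated sets in $X_f$ can be replaced, at the cost of an arbitrarily small change of scale, by separated blocks of $f$, using density of the forward orbit. Once this characterization is available the remaining estimates are routine, the crucial feature being that $\delta$ depends only on $\varepsilon$ and not on $N$ — precisely what \emph{uniform} convergence supplies.
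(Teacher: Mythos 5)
Your proposal is correct, but there is no in-paper proof to compare it against: the paper does not prove Proposition \ref{semicontinuity}, deferring it (together with Proposition \ref{some properties about anqie entropy}) to \cite[Section 4]{Ge} and \cite{Wei1}. So what you have written is a self-contained argument where the paper offers only a citation, and it holds up. Your route --- identifying $\AE(f)$ with the topological entropy of the orbit-closure subshift $(X_f,\sigma)$ via the factor-map argument (legitimate, since the paper's Section 2 supplies the infimum-over-realizations characterization of $\AE$), computing that entropy by counting $\varepsilon$-separated $J$-blocks of $f$ itself, and then the two-scale comparison $N_f(J,\varepsilon)\le N_{f_N}(J,\varepsilon-2\delta)$ for all $N$ with $\sup_n|f_N(n)-f(n)|\le\delta<\varepsilon/2$ --- is sound, and the key structural point is exactly the one you isolate: the loss of scale from $\varepsilon$ to $\varepsilon-2\delta$ is uniform in $N$, which is what uniform convergence provides and what makes the $\liminf$ survive the limit $\varepsilon\to0^+$. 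Your preliminary remark that the subadditive bound $\AE(f)\le\AE(f_N)+\AE(f-f_N)$ cannot work, witnessed by $\varepsilon_N s\to0$ with $\AE(\varepsilon_N s)=\AE(s)>0$ via the two scalings, is also correct, and it shows in passing that the inequality in the proposition can be strict, i.e. $\AE$ is genuinely only lower semicontinuous. Two mild caveats, both routine and both anticipated by your closing paragraph: the paper's realizations are compact Hausdorff systems rather than metric ones, but the factor argument (entropy does not increase under factors, nor under passing to closed invariant subsets) works in that generality, and $X_f$ itself is metrizable so the Bowen--Dinaburg formula applies where you invoke it; and the translation between Bowen separation for a product metric on $X_f$ and your per-coordinate sup-separation of blocks costs only an additive $O_\varepsilon(1)$ in the block length and a constant factor in the scale, which vanish in the double limit.
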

We refer readers to \cite[Section 4]{Ge} and \cite{Wei1} for details of the above two propositions. Arithmetic functions of zero anqie entropy can be realized in topological dynamical systems of zero topological entropy \cite[Section 3]{Ge}. Based on this fact, we have the following result.
\begin{proposition}\label{an equivlent condition of Sarnak's conjecture}
A sequence $\{f(n)\}_{n=0}^{\infty}$ is deterministic if and only if $\AE(f)=0$.
\end{proposition}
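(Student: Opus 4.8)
The plan is to prove the two implications separately, leaning on the characterization of $\AE(f)$ as the infimum of the topological entropies over all realizations of $f$, together with the fact recalled just above that a function of zero anqie entropy admits a zero-entropy realization \cite[Section 3]{Ge}.

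First I would dispatch the easy direction, that $f$ deterministic implies $\AE(f)=0$. By the definition of deterministic, there exist a topological dynamical system $(\mathcal{X},T)$ of zero topological entropy, a point $x_0\in\mathcal{X}$, and a continuous $F$ with $f(n)=F(T^n x_0)$. This is one realization of $f$ whose topological entropy equals $0$. Since $\AE(f)$ is the infimum of the topological entropies taken over all realizations of $f$, we immediately get $\AE(f)\le 0$; and because topological entropy is always nonnegative, the same bound from below gives $\AE(f)\ge 0$. Hence $\AE(f)=0$.

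The converse, $\AE(f)=0\Rightarrow f$ deterministic, is where the genuine content sits, and it is also where I expect the only real obstacle. The issue is that the infimum defining $\AE(f)$ need not a priori be attained: a sequence of realizations whose entropies tend to $0$ does not by itself furnish a single realization of entropy exactly $0$. Closing this gap is precisely the purpose of the construction in \cite[Section 3]{Ge}, which produces a canonical realization of $f$ whose topological entropy coincides with, rather than merely dominates, the anqie entropy. Invoking that result, the hypothesis $\AE(f)=0$ yields an honest zero-entropy realization of $f$, so $f$ is deterministic by definition.

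Thus the substantive work is entirely absorbed into Ge's construction of an entropy-matching realization, which I am treating as an available cited result; the remainder is a two-line unwinding of the definitions, with no further estimates required.
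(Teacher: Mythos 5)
Your proposal is correct and follows essentially the same route as the paper: the paper states the proposition immediately after recalling that $\AE(f)$ is the infimum of topological entropies over all realizations and that functions of zero anqie entropy can be realized in zero-entropy systems (\cite[Section 3]{Ge}), which are exactly the two ingredients you use for the easy and hard directions respectively. You correctly identified that the only nontrivial point is the non-attainment of the infimum, and that Ge's realization result is what closes this gap --- precisely the paper's (implicit) argument.
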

\subsection{Anqie entropy of arithmetic functions with finite ranges}\label{Entropy of arithmetic functions with finite ranges}
In this subsection, we show some results on computing anqie entropy of functions with finite ranges, which will be used in the proofs of Theorems \ref{a large class of functions with zero anqie entropy} and \ref{0601thm2}. Let us first recall some basic concepts in symbolic dynamical systems. For a finite set $\mathbb{A}$,
a \emph{block} over $\mathbb{A}$ is a finite sequence of symbols from $\mathbb{A}$.  A \emph{$J$-block}
is a block of length $J$ ($J\geq 1$). For any given (finite or infinite) sequence $x=(x_{0},x_{1},\ldots)$ of symbols
from $\mathbb{A}$, we say that a block $w$ \emph{occurs in} $x$ or $x$ \emph{contains} $w$ if there are
natural numbers $i$, $j$ with $i\leq j$ such that $(x_{i},\ldots,x_{j})=w$.  A \emph{concatenation} of
two blocks $w_{1}=(a_{1},\ldots,a_{k})$ and $w_{2}=(b_{1},\ldots,b_{l})$ over $\mathbb{A}$ is the block
$w_{1}w_{2}=(a_{1},\ldots,a_{k},b_{1},\ldots,b_{l})$.

Now suppose that $f:\mathbb{N}\rightarrow \mathbb{C}$ has finite range. Let $\mathcal{B}_{J}(f)$ denote the set of all $J$-blocks occurring in $f$, i.e.,
$$
\mathcal{B}_{J}(f)=\{(f(n),f(n+1),\ldots,f(n+J-1)):n\geq 0\}.
$$
A $J$-block of the form
$$
(f(lJ),f(lJ+1),\ldots, f(lJ+J-1))
$$
for some $l\in \mathbb{N}$ is called a \emph{regular $J$-block in $f$}. Denote the set of all regular $J$-blocks in $f$ by $\mathcal{B}_{J}^{r}(f)$. A $J$-block, which occurs infinitely many times in the sequence
$\{f(n)\}_{n=0}^{\infty}$, is called an \emph{effective $J$-block in $f$}. Denote the set of all such blocks in $f$ by $\mathcal{B}_{J}^{e}(f)$.  A $J$-block $(a_{0},a_{1},\ldots,a_{J-1})$ is called \emph{regularly effective in $f$} if there are infinitely many natural numbers $l$ such that
\[(a_{0},a_{1},\ldots,a_{J-1})=(f(lJ),...,f(lJ+J-1)).\]
The set of all regularly effective blocks in $f$ is denoted by $\mathcal{B}_{J}^{e,r}(f)$.

For a function $f$ taking finitely many values, as we mentioned previously,
\begin{equation}\label{the definition of entropy}
\AE(f)=\lim_{J\rightarrow \infty}\frac{\log|\mathcal{B}_{J}(f)|}{J}.
\end{equation}
In the following, we show that $\AE(f)$ also can be computed through the cardinality of $\mathcal{B}_{J}^{r}(f)$, $\mathcal{B}_{J}^{e}(f)$ or $\mathcal{B}_{J}^{e,r}(f)$.
\begin{proposition}\label{0330thmyouxiaozhengshang}
Let $f(n)$ be an arithmetic function with finite range. Then \footnote{Although the proof is not hard, we did not
find it in the literature.}
\begin{align}\label{0330shi1}
\AE(f)=
\lim\limits_{J\rightarrow\infty} \frac{\log|\BBB_J^r(f)|} {J}=
\lim\limits_{J\rightarrow\infty} \frac{\log|\BBB^{e,r}_J(f)|} {J}=
\lim\limits_{J\rightarrow\infty} \frac{\log|\BBB_J^e(f)|} {J}.
\end{align}
\end{proposition}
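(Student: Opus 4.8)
The plan is to deduce all three equalities in \eqref{0330shi1} from a single squeeze. A regularly effective block is, in particular, both regular and effective, and every effective (resp.\ regular) block occurs, so for every $J$ one has the inclusions $\BBB_J^{e,r}(f)\subseteq \BBB_J^r(f)\subseteq \BBB_J(f)$ and $\BBB_J^{e,r}(f)\subseteq \BBB_J^e(f)\subseteq \BBB_J(f)$. Moreover, since a $(J_1+J_2)$-block is determined by its initial $J_1$-block together with the ensuing $J_2$-block, $\log|\BBB_J(f)|$ is subadditive, whence by Fekete's lemma $\lim_{J\to\infty}\frac{\log|\BBB_J(f)|}{J}=\inf_J\frac{\log|\BBB_J(f)|}{J}=\AE(f)$; in particular $\frac{\log|\BBB_J(f)|}{J}\ge \AE(f)$ for all $J$. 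Consequently everything follows once I prove the matching lower bound
\[
\frac{\log|\BBB_J^{e,r}(f)|}{J}\ge \AE(f)\qquad\text{for every }J,
\]
because then all four quantities are trapped between $\AE(f)$ and $\frac{\log|\BBB_J(f)|}{J}$, and the latter tends to $\AE(f)$.

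The engine is a \emph{chopping lemma} that I would isolate for an arbitrary sequence $z$ over a finite alphabet: the exponential growth rate of the number of \emph{all} $M$-blocks of $z$ equals that of the $M$-blocks occurring infinitely often. The inequality $\le$ for the rates is trivial from inclusion; for $\ge$, observe that since there are only finitely many $m$-blocks, there is a threshold $L_m$ beyond which every window $z_{[p,p+m)}$ with $p\ge L_m$ occurs infinitely often. Any $M$-block based at a position $\ge L_m$ can then be cut into $\lceil M/m\rceil$ consecutive windows of length $m$, each a recurrent $m$-block, so the number of $M$-blocks is at most $L_m+|\BBB_m^e(z)|^{\lceil M/m\rceil}$; taking $\log$, dividing by $M$, sending $M\to\infty$, and optimizing over $m$ gives the reverse inequality. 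Subadditivity of both block counts (hence existence of both rates) is checked exactly as in the first paragraph.

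With the lemma available, fix $J$ and pass to the \emph{$J$-th power sequence} $y=(y_l)_{l\ge 0}$ over the finite alphabet $\BBB_J^r(f)$, defined by $y_l=(f(lJ),\dots,f(lJ+J-1))$. Read as a word over the original alphabet, an $M$-block of $y$ is precisely an $MJ$-block of $f$ based at a multiple of $J$, so $|\BBB_M(y)|\le |\BBB_{MJ}(f)|$; conversely every $MJ$-block of $f$ occurs at some position $qJ+s$ with $0\le s<J$ and hence is a subword of the $(M+1)$-block $y_{[q,q+M+1)}$, giving $|\BBB_{MJ}(f)|\le (J+1)\,|\BBB_{M+1}(y)|$. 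These two bounds force $\lim_{M\to\infty}\frac{\log|\BBB_M(y)|}{M}=J\,\AE(f)$. On the other hand, if an $M$-block $(w_0,\dots,w_{M-1})$ of $y$ occurs infinitely often, then each coordinate $w_i$ occurs at infinitely many multiples of $J$, that is $w_i\in\BBB_J^{e,r}(f)$; hence the recurrent $M$-blocks of $y$ inject into $\big(\BBB_J^{e,r}(f)\big)^M$, so $|\BBB_M^e(y)|\le |\BBB_J^{e,r}(f)|^M$. Applying the chopping lemma to $y$ identifies the growth rate of $|\BBB_M^e(y)|$ with that of $|\BBB_M(y)|$, namely $J\,\AE(f)$, and combining this with the last estimate yields $J\,\AE(f)\le \log|\BBB_J^{e,r}(f)|$, which is the desired inequality.

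The only genuine difficulty is this final lower bound: a priori the full complexity $\AE(f)$ might be generated only at irregular starting positions or through transient blocks, whereas $\BBB_J^{e,r}(f)$ is a very thin subfamily. The device that removes both restrictions simultaneously is the passage to the power sequence $y$ together with the chopping lemma — recurrence of a word in $y$ automatically certifies the regular-effectiveness of each of its $J$-letter chunks in $f$, while the chopping lemma guarantees that recurrent words already realize the full growth rate $J\,\AE(f)$. Once the displayed inequality holds for all $J$, the squeeze of the first paragraph delivers the three equalities of \eqref{0330shi1} at once.
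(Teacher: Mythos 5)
Your proof is correct, and although it turns on the same two combinatorial facts as the paper's proof --- every long block factors into consecutive short regular blocks, and only finitely many blocks of a given length are transient, so past a threshold every window occurs infinitely often --- the architecture is genuinely different. The paper proves the three equalities sequentially by direct counting inside $f$: first $|\BBB_{lJ}(f)|\le J|\BBB^r_J(f)|^{l+1}$ gives $\AE(f)=\lim_{J\rightarrow\infty}\frac{1}{J}\log|\BBB^r_J(f)|$; then a threshold $l_J$ (beyond which every regular $J$-block is regularly effective) gives $|\BBB^r_{lJ}(f)|\le|\BBB^{e,r}_J(f)|^l+1$ and hence the second equality; finally $\BBB^e_J(f)$ is sandwiched between $\BBB^{e,r}_J(f)$ and $\BBB_J(f)$. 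You instead reduce all three equalities to the single inequality $\log|\BBB^{e,r}_J(f)|\ge J\,\AE(f)$ plus a squeeze, with Fekete's lemma supplying the termwise bound $\frac{1}{J}\log|\BBB_J(f)|\ge\AE(f)$; the key inequality comes from a general chopping lemma (for any sequence over a finite alphabet, the growth rate of effective blocks equals that of all blocks) applied to the power sequence $y_l=(f(lJ),\dots,f(lJ+J-1))$, which converts regular effectiveness in $f$ into plain effectiveness in $y$. Your route buys modularity and generality --- the chopping lemma is a reusable statement about arbitrary symbolic sequences, and the threshold argument is run exactly once, inside it --- at the price of the auxiliary sequence $y$ and the transfer estimates $|\BBB_M(y)|\le|\BBB_{MJ}(f)|\le(J+1)|\BBB_{M+1}(y)|$; the paper's route is shorter and needs neither Fekete nor $y$, since the limit defining $\AE(f)$ is already known to exist and can simply be evaluated along the subsequence $lJ$. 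Two small points you should make explicit in a final write-up: $\BBB^e_m$ and $\BBB^{e,r}_J$ are nonempty by pigeonhole, so their logarithms are defined; and concatenation of $J$-tuples of fixed length is injective, so distinct $M$-blocks of $y$ really do give distinct $MJ$-blocks of $f$.
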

\begin{proof}
We first show that the first equality in equation (\ref{0330shi1}) holds. On one hand,
$\BBB_J^r(f)\subseteq \BBB_J(f)$, so by formula (\ref{the definition of entropy}),
\[
\AE(f)\ge
\limsup\limits_{J\rightarrow\infty} \frac{\log|\BBB_J^r(f)|} {J}\;.
\]
On the other hand, given $J\geq 1$, for any $l\geq 1$ and any $(lJ)$-block $w$ occurring in $f$,
there is a concatenation of certain $l+1$ successive regular $J$-blocks in $f$
containing $w$. Thus $|\BBB_{lJ}(f)|\le J|\BBB^r_J(f)|^{l+1}$. This implies that
$$\AE(f)=\lim_{l\rightarrow \infty}\frac{\log |\mathcal{B}_{lJ}(f)|}{lJ}\leq \frac{\log |\mathcal{B}_{J}^{r}(f)|}{J}.$$
We then have \[\AE(f)\leq \liminf\limits_{J\rightarrow \infty}\frac{\log |\mathcal{B}_{J}^{r}(f)|}{J}.\]
So $\lim_{J\rightarrow \infty}\frac{\log |\mathcal{B}_{J}^{r}(f)|}{J}$ exists and equals $\AE(f)$.

Next we show that the second equality in equation (\ref{0330shi1}) holds, i.e.,
\begin{align}\label{0330shi4}
\lim\limits_{J\rightarrow\infty} \frac{\log|\BBB_J^r(f)|} {J}=
\lim\limits_{J\rightarrow\infty} \frac{\log|\BBB_J^{e,r}(f)|} {J}\;.
\end{align}
Since $\BBB^{e,r}_J(f)\subseteq \BBB_J^r(f)$,
\begin{align}\label{0330shi5}
\lim\limits_{J\rightarrow\infty} \frac{\log|\BBB_J^r(f)|} {J}\ge
\limsup\limits_{J\rightarrow\infty} \frac{\log|\BBB_J^{e,r}(f)|} {J}\;.
\end{align}
So we only need to show that
\begin{align}\label{0330shi6}
\lim\limits_{J\rightarrow\infty} \frac{\log|\BBB_J^r(f)|} {J}\le
\liminf\limits_{J\rightarrow\infty} \frac{\log|\BBB_J^{e,r}(f)|} {J}\;.
\end{align}
In fact,
for any given $J\geq 1$,
since the set $\BBB_J^r(f)\setminus \BBB_J^{e,r}(f)$ is finite, there is an integer $l_{J}\geq 1$ such that all regular $J$-blocks in the set
\[\{(f(nJ),...,f(nJ+J-1)):n\ge l_J\}\]
are regularly effective $J$-blocks in $f$.
Then for each $l>l_{J}$,
there is at most one regular $(lJ)$-block in $f$ which is not a concatenation of regularly effective $J$-blocks in $f$.
This implies $|\BBB^r_{lJ}(f)|\leq |\BBB^{e,r}_{J}(f)|^{l}+1$.
Therefore
$$\AE(f)=\lim_{l\rightarrow \infty} \frac{\log |\BBB^r_{lJ}(f)|}{lJ}\leq \lim_{l\rightarrow \infty} \frac{\log (|\BBB^{e,r}_J(f)|^{l}+1)}{lJ}= \frac{\log |\BBB^{e,r}_J(f)|}{J}$$ holds. Letting $J\rightarrow \infty$, we obtain formula (\ref{0330shi6}).

At last, note that
\[\BBB_J^{e,r}(f)\subseteq \BBB^e_J(f)\subseteq \BBB_J(f),\]
then \[
\lim\limits_{J\rightarrow\infty} \frac{\log|\BBB_J^{e,r}(f)|} {J}\le
\liminf\limits_{J\rightarrow\infty} \frac{\log|\BBB^e_J(f)|} {J}\;,\qquad
\limsup\limits_{J\rightarrow\infty} \frac{\log|\BBB^e_J(f)|} {J}\le
\lim\limits_{J\rightarrow\infty} \frac{\log|\BBB_J(f)|} {J}\;.\]
From formula (\ref{the definition of entropy}) and the second equality in (\ref{0330shi1}), we have
\[
\AE(f)=\lim\limits_{J\rightarrow\infty} \frac{\log|\BBB_J(f)|} {J}=
\lim\limits_{J\rightarrow\infty} \frac{\log|\BBB^{e,r}_J(f)|} {J}.\]
Then $\lim\limits_{J\rightarrow\infty} \frac{\log|\BBB^e_J(f)|} {J}$ exists and equals $\AE(f)$.
So the third equality in equation (\ref{0330shi1}) holds.
\end{proof}
To estimate the cardinality of the set of $J$-blocks occurring in certain sequences, we introduce the following notion.
\begin{definition}\label{definition of pieces}
Let $k,m\geq 1$ be integers and $c_{1},\ldots,c_{m}\in \mathbb{R}$ be constants. Suppose that for $j=1,\ldots,m$, $F_j$ is a non-zero linear function of $x_1,...,x_k$ and $H_j$ is the hyperplane in $\mathbb{R}^{k}$ given by $F_j(x_1,...,x_k)=c_j$. Denote by
\begin{align*}
&H_j^+=\{(x_1,...,x_k)\in \R^k: F_j(x_1,...,x_k)>c_j \},\\  &H_j^-=\{(x_1,...,x_k)\in \R^k: F_j(x_1,...,x_k)<c_j\}.
\end{align*}
A non-empty subset $P$ of $\R^k$ of the following form
\[P=P_1\cap P_2\cap \dots \cap P_m,\]
where each $P_j\in \{H_j^+, H_j^-,H_j\}$, is called  a\textbf{ piece} of $\R^k$ cut by $H_1,...,H_m$.
\end{definition}
In the following lemma we give an upper bound for the cardinality of pieces of $\mathbb{R}^{k}$ cut by hyperplanes.
\begin{lemma}\label{0406lem:fengekongjian}
Let $m, k$ be integers with $m>k\geq 1$. Suppose that
$H_1,...,H_m$ are hyperplanes in $\R^k$. Let $C(H_{1},\ldots,H_{m},k)$ denote the cardinality of pieces of $\R^k$ cut by $H_1,...,H_m$ and $W(m,k)$ denote the maximal value of $C(H_{1},\ldots,H_{m},k)$ when $H_{1},\ldots,H_{m}$ go through all the possible hyperplanes. Then
\[W(m,k)\leq\sum_{j=0}^k 2^j \binom{m}{j}.\]
In particular, $W(m,k)\leq (k+1)2^km^k$.
\end{lemma}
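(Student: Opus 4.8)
The plan is to read the pieces as the nonempty faces (cells of every dimension) of the hyperplane arrangement $\mathcal{A}=\{H_1,\dots,H_m\}$ and to bound their number by induction on $m$ and $k$. A piece $P=P_1\cap\cdots\cap P_m$ is encoded by a sign vector $\sigma\in\{+,-,0\}^m$, where $\sigma_j$ records whether $P_j$ is $H_j^+$, $H_j^-$ or $H_j$; distinct nonempty pieces carry distinct sign vectors, so $C(H_1,\dots,H_m,k)$ is exactly the number of realizable (nonempty) sign vectors. I would write $W(m,k)$ for the maximum of this count over all arrangements of \emph{at most} $m$ hyperplanes in $\R^k$; adding a hyperplane never decreases the count, so this agrees with the maximum over exactly $m$ hyperplanes. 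Setting $S(m,k):=\sum_{j=0}^k 2^j\binom{m}{j}$, the goal becomes $W(m,k)\le S(m,k)$.

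\textbf{The recursion.} The heart of the argument is
\[ W(m,k)\le W(m-1,k)+2\,W(m-1,k-1). \]
To obtain it, I would start from $\mathcal{A}'=\{H_1,\dots,H_{m-1}\}$ and adjoin $H_m$, grouping the faces of $\mathcal{A}$ by their sign pattern against $H_1,\dots,H_{m-1}$. For each nonempty face $F$ of $\mathcal{A}'$ I ask which extensions $\sigma_m\in\{+,-,0\}$ keep the intersection nonempty. Since $F$ is relatively open and convex, only two cases arise: either $F$ lies entirely in one of $H_m^+,H_m^-,H_m$, giving a single face; or $H_m$ properly cuts $F$ (meeting both open half-spaces), in which case $F\cap H_m^+$, $F\cap H_m^-$ and $F\cap H_m$ are all nonempty, giving three faces. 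Thus passing from $\mathcal{A}'$ to $\mathcal{A}$ raises the face count by exactly $2$ for each face of $\mathcal{A}'$ that $H_m$ properly cuts. Every such face meets $H_m$, and $F\mapsto F\cap H_m$ maps it injectively into the faces of the induced arrangement $\mathcal{A}'|_{H_m}=\{H_i\cap H_m: i<m\}$ on $H_m\cong\R^{k-1}$ (two points of $H_m$ lie in the same face of $\mathcal{A}'$ iff they have the same signs against every $H_i$, which on $H_m$ is precisely the face relation of $\mathcal{A}'|_{H_m}$, a parallel $H_i$ being constant on $H_m$). As $\mathcal{A}'|_{H_m}$ has at most $m-1$ hyperplanes in $\R^{k-1}$, the number of properly cut faces is $\le W(m-1,k-1)$, and combining with the bound $\le W(m-1,k)$ on the faces of $\mathcal{A}'$ yields the recursion.

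\textbf{Closing the induction and the explicit bound.} The base cases $W(m,0)=1=S(m,0)$ (a point) and $W(0,k)=1=S(0,k)$ (no cuts) are immediate, and by Pascal's rule
\[ S(m-1,k)+2S(m-1,k-1)=\sum_{j=0}^k 2^j\binom{m-1}{j}+\sum_{j=1}^k 2^j\binom{m-1}{j-1}=\sum_{j=0}^k 2^j\binom{m}{j}=S(m,k), \]
so $S$ satisfies the same recursion with equality. Induction on $m$ then gives $W(m,k)\le S(m,k)$. For the ``in particular'' claim I would use $\binom{m}{j}\le m^j$ and $2^jm^j\le 2^km^k$ for $0\le j\le k$ (valid since $m\ge 1$); the $k+1$ summands then give $S(m,k)\le(k+1)2^km^k$.

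\textbf{Main obstacle.} The one genuinely delicate point is the geometric bookkeeping in the recursion: verifying that a properly cut relatively open face splits into exactly three nonempty faces, and that $F\mapsto F\cap H_m$ lands in the faces of the induced arrangement. The subtlety is that a lower-dimensional face of $\mathcal{A}'$ may happen to lie inside $H_m$ (when $H_m$ contains a flat $H_{i_1}\cap\cdots\cap H_{i_r}$); such faces contribute no increment and must be omitted from the ``$+2$'' tally, which is exactly why an injection into the induced faces, rather than a bijection onto all properly cut faces, is what I rely on.
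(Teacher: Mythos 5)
Your proof is correct and follows essentially the same route as the paper: the identical recursion $W(m,k)\le W(m-1,k)+2W(m-1,k-1)$, established by the same fiber analysis (the paper deletes $H_1$ and studies the $\le 3$ preimages of each piece under the forgetful map $\psi$, while you add $H_m$ and count how each face splits --- the same argument read in opposite directions), with the excess bounded in both cases by the pieces of the induced arrangement on the distinguished hyperplane, and the binomial sum closed by Pascal's rule. Your write-up is somewhat more explicit about sign vectors, base cases, and the injectivity into the induced arrangement, but there is no substantive difference in method.
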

\begin{proof}
Notice that $W(1,k)=3$ for any $k\geq 1$. If we have
\begin{align}\label{0802shi1}
W(m,k)\le W(m-1,k)+2W(m-1,k-1),
\end{align}
then one can easily draw the conclusion by induction on $m$.

Now we show formula (\ref{0802shi1}) holds for $m\geq 2$. Let $\psi$ be the map from the set of all pieces of $\R^k$ cut by $H_1,...,H_m$, denoted by $\mathcal{P}$, onto the set of all pieces of $\R^k$ cut by $H_2,...,H_m$, denoted by $\widetilde{\mathcal{P}}$, given by
\[\psi: P_1 \cap P_2\cap \dots \cap P_m\mapsto P_2\cap \dots \cap P_m.\]
Then for any piece $D\in \widetilde{\mathcal{P}}$,
$|\psi^{-1}(D)|\le 3$.
Now we show that if $|\psi^{-1}(D)|\ge 2$,
then $H_1\cap D$ is nonempty.
In fact,
the only case we need to consider is when both $H_1^+\cap D$ and $H_1^-\cap D$ are nonempty.
In this case,
suppose $p_1\in H_1^+\cap D$ and $p_2 \in H_1^-\cap D$, then there is a point $p_{0}\in  H_1\cap D$ by the convexity of $D$. Hence, $|\mathcal{P}|-|\widetilde{\mathcal{P}}|=C(H_{1},\ldots,H_{m},k)-C(H_{2},\ldots,H_{m},k)$ does not exceed the cardinality of pieces of the form $H_1\cap P_2\cap \dots \cap P_m$ in $\mathcal{P}$ times 2. In the following, we estimate this cardinality.

For each $j=2,\ldots,m$, $H_1\cap H_j$ is a hyperplane in $H_1$,
or empty, or equal to $H_1$.
Denote $G_1,...,G_n$ to be the ones which are hyperplanes in $H_{1}$.
Then $n\le m-1$. We claim that if $H_1\cap P_2\cap \dots \cap P_m=(H_1\cap P_2)\cap \cdots\cap (H_1\cap P_m)\neq \emptyset$, then $(H_1\cap P_2)\cap \cdots\cap (H_1\cap P_m)$ is a piece of $H_{1}$ cut by $G_{1},\ldots,G_{n}$. In fact, for $j=2,\ldots,m$, there are at most three cases. When $H_1\cap H_j=\emptyset$, then $H_{1}\cap H_{j}^{+}=H_{1}$ or $H_{1}\cap H_{j}^{-}=H_{1}$ and then $P_{j}=H_{j}^{+}$ or $H_{j}^{-}$, respectively. When $H_1\cap H_j=H_1$,
then $P_j=H_j$ and $H_1\cap P_j=H_1$. When $H_1\cap H_j=G_{l_{j}}$ is a hyperplane in $H_1$,
then $\{H_1\cap H_j^{+}, H_1\cap H_j^{-}\}=\{G_{l_{j}}^{+}, G_{l_{j}}^{-}\}$. Hence $(H_1\cap P_2)\cap \cdots\cap (H_1\cap P_m)$ is of the form $T_1\cap \cdots \cap T_n$, where each $T_l\in\{G_l,G_l^+,G_l^{-}\}$.

So the cardinality of pieces of the form $H_1\cap P_2\cap \dots \cap P_m$ in $\mathcal{P}$ does not exceed $C(G_{1},\ldots, G_{n},k-1)$ which is at most $W(n,k-1)\le W(m-1,k-1)$.
Therefore the inequality (\ref{0802shi1}) holds
and the proof is completed.
\end{proof}
\section{Proofs of Theorems \ref{a large class of functions with zero anqie entropy} and \ref{0601thm2}}\label{Anqie entropy of a certain class of arithmetic functions}
Recall that $\|x\|=\inf_{m\in \mathbb{Z}}|x-m|=\min\{\{x\},1-\{x\}\}$. Then $\|\cdot\|$ defines a metric on $\mathbb{R}/\mathbb{Z}$ and the topology induced by it on $\mathbb{R}/\mathbb{Z}$ is equivalent to the Euclid topology on the unit circle. The following lemma will be used in this section.
\begin{lemma}\label{find Y(n)}
Let $f,g$ be real-valued arithmetic functions with
\begin{align}\label{0521shi1}
\lim_{n\rightarrow\infty}\|\triangle^k f(n)-g(n)\|=0
\end{align}
for some $k\in \mathbb{N}$.
Then, for any $\varepsilon>0$ and positive integer $m\geq 1$,
there is some $L\in \N$ such that,
whenever $n>L$,
the following holds for any $j$ with $0\leq j\leq m-1$,
$$\|f(n+j)-Y_{n}(n+j)\|\leq \varepsilon,$$
where $Y_{n}(n+j)$ is defined to be $f(n+j)$ when $0\le j\le k-1$ and to be the value
determined by the following linear equations when $k\le j\le m-1$,
\begin{align}\label{0521shi2}
\triangle^k Y_{n}(n+j)=g(n+j)\;,\;\; j=0,1,...,m-k-1\;.
\end{align}
\end{lemma}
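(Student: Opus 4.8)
The plan is to analyze the difference $D := f - Y_n$ directly and show that, on the window $[n,n+m-1]$, it stays within distance $\varepsilon$ of $\mathbb{Z}$. First I would record the discrete initial value problem that $D$ satisfies. By construction $D(n+j)=0$ for $0\le j\le k-1$, and since $\triangle^k Y_n(n+j)=g(n+j)$ holds exactly while $\triangle^k f(n+j)=g(n+j)+r(n+j)$ with $r(n+j):=\triangle^k f(n+j)-g(n+j)$, subtracting the two gives
\[\triangle^k D(n+j)=r(n+j),\qquad 0\le j\le m-k-1.\]
Note that this recurrence, together with the zero initial data, determines $D$ on the whole window $[n,n+m-1]$. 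By hypothesis (\ref{0521shi1}) we may pick $L$ so large that for $n>L$ the quantity $\|r(n+j)\|$ is as small as we like, uniformly over the bounded range $0\le j\le m-k-1$.

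The one genuine subtlety is that the smallness provided by (\ref{0521shi1}) is only modulo $\mathbb{Z}$: $r(n+j)$ itself need not be small in $\mathbb{R}$, only its distance to an integer. To resolve this I would split $r(n+j)=a(n+j)+s(n+j)$, where $a(n+j)\in\mathbb{Z}$ is a nearest integer and $|s(n+j)|=\|r(n+j)\|$. Let $A$ solve the same problem with source $a$, namely $A(n+j)=0$ for $0\le j\le k-1$ and $\triangle^k A(n+j)=a(n+j)$. Expanding $\triangle^k$ as $\sum_{i=0}^k(-1)^{k-i}\binom{k}{i}(\cdot)$ and isolating the top term gives the recurrence
\[A(n+j+k)=a(n+j)-\sum_{i=0}^{k-1}(-1)^{k-i}\binom{k}{i}A(n+j+i).\]
Because the binomial coefficients are integers, the $a(n+j)$ are integers, and the initial data are integers, an induction shows every $A(n+j)$ is an integer; hence $A$ is invisible to $\|\cdot\|$.

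The heart of the argument is then a bound on the remainder $d:=D-A$, which satisfies $d(n+j)=0$ for $0\le j\le k-1$ and $\triangle^k d(n+j)=s(n+j)$, a problem with a genuinely small real source. Writing the analogous recurrence for $d$ and estimating the running maximum $M_t:=\max_{0\le i\le k-1+t}|d(n+i)|$ inductively, each step contributes at most $\|r\|_{\max}:=\max_j\|r(n+j)\|$ plus $(2^k-1)$ times the previous maximum, so $M_{t+1}\le \|r\|_{\max}+2^k M_t$. Since the window has at most $m$ steps and $M_0=0$, this yields $|d(n+j)|\le C(k,m)\,\|r\|_{\max}$ for all $0\le j\le m-1$, with $C(k,m)$ depending only on $k$ and $m$. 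As $A$ is integer-valued, we conclude $\|f(n+j)-Y_n(n+j)\|=\|D(n+j)\|=\|d(n+j)\|\le|d(n+j)|\le C(k,m)\,\|r\|_{\max}$. Finally I would enlarge $L$ so that $\|r(n+j)\|\le \varepsilon/C(k,m)$ whenever $n>L$ and $0\le j\le m-k-1$, which is possible by (\ref{0521shi1}), giving the claim.

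I expect the main obstacle to be the clean separation of the integer and fractional contributions: one must verify that an integer-valued source propagates to an integer-valued solution (so that it does not affect $\|\cdot\|$) while simultaneously keeping the error constant $C(k,m)$ independent of $n$. Both rely on the fact that $m$ — and hence the number of recurrence steps and the magnitude of the binomial coefficients involved — is fixed in advance, so the exponential-in-$m$ amplification of the small source is harmless.
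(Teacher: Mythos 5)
Your proof is correct, and it is organized differently from the paper's. The paper argues by induction on the window length $m$: at the inductive step it defines $Y_n(n+m_0)$ through the recurrence and propagates the error bound directly in the metric $\|\cdot\|_{\mathbb{R}/\mathbb{Z}}$, using subadditivity together with the fact that $\|c\,x\|\le |c|\,\|x\|$ for integer $c$ (the coefficients $\binom{k}{l}$ of $\triangle^k$), invoking the induction hypothesis with $\varepsilon/2^{k+1}$ so that the new position's error stays below $\varepsilon$. You instead analyze the error $D=f-Y_n$ in $\mathbb{R}$ itself: you split the source $r=\triangle^k f-g$ into a nearest-integer part $a$ and a small real part $s$, show that the integer source propagates to an integer-valued solution $A$ (hence invisible to $\|\cdot\|$), and bound the remaining solution $d$ by the explicit recurrence estimate $M_{t+1}\le\|r\|_{\max}+2^k M_t$. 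The two arguments rest on the same two pillars, namely integrality of the coefficients of $\triangle^k$ and the fact that $m$ is fixed in advance, and your recurrence estimate is essentially the quantitative content of the paper's induction; indeed, the inequality $\|c\,x\|\le|c|\,\|x\|$ that the paper uses is proved by exactly the nearest-integer splitting you make explicit. What your version buys is an explicit constant $C(k,m)$ and a clean isolation of the mod-$1$ issue in a single step (discarding $A$); the paper's version is shorter because it folds the integer/fractional bookkeeping into the quasi-norm inequalities. One small remark: your recurrences are written for $k\ge 1$; for $k=0$ (allowed, since $k\in\mathbb{N}$) the claim is immediate, as the paper notes, and your argument degenerates correctly to $D=r$ with $C=1$.
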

\begin{proof}
When $k=0$, the claim is trivial. In the following, we assume $k\geq 1$. We use induction on $m$ to prove the lemma. For $m\leq k$, since $Y_{n}(n+j)=f(n+j)$ for $j=0,1,\ldots,m-1$, choose $L=0$. Then we obtain the claim in the lemma. Assume inductively that the claim holds for some $m_{0}\geq k$. In the following we shall prove the claim holds for $m_{0}+1$ case. By condition (\ref{0521shi1}) and Proposition \ref{formula for the k-th defference},
\[\lim_{n\rightarrow\infty}\|\sum_{l=0}^{k}(-1)^{k-l}\binom{k}{l}f(n+l)-g(n)\|=0.\]
Then for any $\epsilon>0$, there is an $L_{1}>0$ such that whenever $n>L_{1}$,
\[\|\sum_{l=0}^{k}(-1)^{k-l}\binom{k}{l}f(n+m_{0}-k+l)-g(n+m_{0}-k)\|<\epsilon/2,\]
i.e.,
\begin{equation}\label{080601}
\|f(n+m_{0})-\big(g(n+m_{0}-k)-\sum_{l=0}^{k-1}(-1)^{k-l}\binom{k}{l}f(n+m_{0}-k+l)\big)\|<\epsilon/2.
\end{equation}
By the induction hypothesis, there is an $L_{0}\in \N$, whenever $n>L_{0}$,
\begin{equation}\label{080602}
\|f(n+j)-Y_{n}(n+j)\|<\epsilon/2^{k+1},\;j=0,1,\ldots,m_{0}-1.
\end{equation}
Let $L=\max\{L_{0},L_{1}\}$. Then by equations (\ref{080601}) and (\ref{080602}), whenever $n>L$,
\begin{equation}\label{080603}
\|f(n+m_{0})-\big(g(n+m_{0}-k)-\sum_{l=0}^{k-1}(-1)^{k-l}\binom{k}{l}Y_{n}(n+m_{0}-k+l)\big)\|<\epsilon.
\end{equation}
Define $Y_{n}(n+m_{0})$ to be the value determined in the following equation,
\[\sum_{l=0}^{k}(-1)^{k-l}\binom{k}{l}Y_{n}(n+m_{0}-k+l)=g(n+m_{0}-k).\]
Then by equation (\ref{080603}),
\[\|f(n+m_{0})-Y_{n}(n+m_{0})\|<\epsilon.\]
Combing with equation (\ref{080602}), we obtain the claim for $m=m_{0}+1$, completing the induction.
\end{proof}

\begin{reptheorem}{a large class of functions with zero anqie entropy}
Let $w$ be a positive integer.
Suppose that $p_1(y),\ldots, p_w(y)$ are polynomials in $\mathbb{R}[y]$,
and $\N=S_1\cup S_2\cup\cdots \cup S_{w}$ is a partition of $\mathbb{N}$ with each $1_{S_{v}}(n)$ deterministic.
Let
\begin{equation}\label{thedefinionofg}
g(n)=\sum_{v=1}^w 1_{S_v}(n) p_v(n).
\end{equation}
Then for any real-valued arithmetic function $f(n)$ satisfying
\begin{equation}\label{conditions need to satisfy}
\lim_{n\rightarrow \infty} \|\triangle^k f(n)-g(n)\|=0
\end{equation}
for some $k\in \N$, $e(f(n))$ and  $e(\sigma(f)(n))$ are deterministic.
\end{reptheorem}
We use the following strategy to prove $\AE(f(n))=0$. Firstly, we construct a sequence $\{g_{N}(n)\}_{N=0}^{\infty}$ of arithmetic functions with finite ranges to approach $f(n)$ with respect to $\|\cdot\|_{\mathbb{R/\mathbb{Z}}}$. Then it suffices to prove $\AE(g_{N})=0$ for $N$ large enough by Proposition \ref{semicontinuity}. Secondly, we decompose $\mathcal{B}_{{2^{m}}}^{e,r}(g_{N})$, the set of all $2^{m}$-regularly effective blocks occurring in $g_{N}$, into some subsets $\mathcal{A}_{\nu,m}$ according to the ``zero entropy partition'' given in $g(n)$. Thirdly, for each $\nu$, we construct a family of hyperplanes $H_{\nu,0},\ldots,H_{\nu,2^{m}-1}$ in $\mathbb{R}^{q}$ based on the approximation of $f(n)$ to polynomials after differentiation.  Then there is a correspondence between the set of pieces of $\R^q$ cut by $H_{\nu,0},...,H_{\nu, 2^{m}-1}$ and $\mathcal{A}_{\nu,m}$. Precisely, each element in $\mathcal{A}_{\nu,m}$ uniquely determines a piece of $\mathbb{R}^{q}$ cut by $H_{\nu,0},...,H_{\nu,2^{m}-1}$. So $|\mathcal{A}_{\nu,m}|$, moreover $|\mathcal{B}_{{2^{m}}}^{e,r}(g_{N})|$, is bounded by $W(2^m,q)$, which has the polynomial growth rate with respect to $2^{m}$ by Lemma \ref{0406lem:fengekongjian}.
\begin{proof}[Proof of Theorem \ref{a large class of functions with zero anqie entropy}]
In the following, we first prove $e(f(n))$ is deterministic. Then a similar argument leads to $e(\sigma(f)(n))$ deterministic since  $\triangle^k f(n)=\triangle^{k+1} (\sigma (f))(n)$ by the fact that $\sigma$ is the inverse of $\triangle$.

Given $N\geq 1$, by Lemma \ref{find Y(n)}, for each integer $m\geq 1$, there is a sufficiently large $L_m\in \mathbb{N}$ with $2^m|L_m$,
such that whenever $n\ge L_m$, we have
\begin{equation}\label{080606}
\|f(n+j)-Y_{n}(n+j)\|\leq 1/N,\;j=0,\ldots,2^{m}-1,
\end{equation}
where $Y_{n}(n+j)$ is defined to be $f(n+j)$ when $0\le j\le k-1$ and the value determined by the following linear equations when $k\le j\le 2^{m}-1$,
\begin{align}\label{0901shi2}
\triangle^k Y_{n}(n+j)=g(n+j),\;j=0,...,2^{m}-k-1.
\end{align}
Moreover, we may further assume that the sequence $\{L_m\}_{m=0}^{\infty}$ ($L_0=0$) chosen above satisfies $L_{m+1}>L_m$ for each $m\geq 1$.
Let $d_m=(L_{m+1}-L_m)/2^m$. Then the following is a partition of $\mathbb{N}$,
$$\mathbb{N}=\bigcup_{m=0}^\infty \bigcup_{a=0}^{d_m-1} \{L_m+a2^m, L_m+a2^m+1,..., L_m+a2^m+2^m-1\}\;.$$
We define $Y_{L_0+a}(L_0+a )=f(L_{0}+a)$ for $a=0,1,\ldots,L_{1}-1$ and define the arithmetic function $g_{N}$ as follows: for $a=0,\ldots,d_{m}-1$ and $j=0,\ldots,2^{m}-1$, \begin{equation}\label{definition of gN}
g_N(L_m+a2^m+j)=\frac{t}{N}
\end{equation}
when $\{Y_{L_m+a2^m }(L_m+a2^m+j )\}\in [\frac{t}{N},\frac{t+1}{N})$ for some integer $t$ with $0\leq t\leq N-1$.
By formula (\ref{080606}),
\[\|f(L_m+a2^m+j)-g_N(L_m+a2^m+j)\|<2/N.\]
Then $\sup_{n\in \mathbb{N}}\|f(n)-g_N(n)\|<2/N$. Hence $\lim_{N\rightarrow \infty}\sup_{n\in \mathbb{N}}|e(f(n))-e(g_{N}(n))|=0$. To prove $e(f(n))$ deterministic (that is $\AE\big(e(f(n))\big)=0$ by Proposition \ref{an equivlent condition of Sarnak's conjecture}), it suffices to prove $\AE(g_{N}(n))=0$ for $N$ large enough by Propositions \ref{some properties about anqie entropy} and \ref{semicontinuity}.

In the remaining part of this proof, we shall prove $\AE(g_{N})=0$ for any given $N\geq 1$.
Let $\eta$ be the function defined as $\eta(n)=v$ if $n\in S_v $, $v=1.\ldots,w$.
Then the anqie entropy of $\eta$ is zero. Note that $\eta(n)$ has finite range. By formula (\ref{the definition of entropy}),
\[ \lim_{m\rightarrow\infty} \frac{\log |\mathcal{B}_{2^{m}}(\eta)|}{2^{m}}=0,\]
where $\mathcal{B}_{2^{m}}(\eta)$ is the set of all $2^{m}$-blocks occurring in $\eta$.
For any $2^{m}$-block $\nu$ in $\mathcal{B}_{2^{m}}(\eta)$,
denote by
\[\mathcal{A}_{\nu,m}=\{(g_N(n2^{m}),...,g_N(n2^{m}+2^{m}-1)): n\in \mathbb{N},\; n2^{m}\geq L_{m},\; (\eta(n2^{m}),...,\eta(n2^{m}+2^{m}-1))=\nu \}.\]
Recall that $\mathcal{B}_{{2^{m}}}^{e,r}(g_{N})$ denotes the set of all $2^{m}$-regularly effective blocks occurring in $g_{N}$.
Then
\begin{equation}\label{blocks in g2}
\mathcal{B}_{2^{m}}^{e,r}(g_N)\subseteq \bigcup_{\nu\in \mathcal{B}_{2^{m}}(\eta)} \mathcal{A}_{\nu,m}.
\end{equation}
Let $d=\max (deg (p_1),...,deg (p_w))+1$,
where we define $deg(p_i)$ as $-1$ when $p_i=0$. In the following, we estimate the cardinality of $\mathcal{A}_{\nu,m}$ for each given $m$ with $2^{m}> \max(k+1,d)$ and $\nu=(\nu_{0},...,\nu_{2^{m}-1})$ in $\mathcal{B}_{2^{m}}(\eta)$. Denote by $q=k+wd$. If $q=0$ (i.e., $k=0$ and $d=0$), then by the definition, $g_{N}(n)=0$ when $n\geq L_{1}$. So it is easy to see that $\AE(g_{N})=0$. Then $\AE(e(f))=0$. In the following, we may assume that $q\geq 1$.

We first define linear functions $F_{\nu,0},\ldots,F_{\nu,2^{m}-1}$ from $\mathbb{R}^{q}$ to $\mathbb{R}$. Define $F_{\nu,j}(x_{0},x_{1},\ldots,x_{q-1})=x_{j}$ for $j=0,\ldots, k-1$. Assume inductively that we have defined the linear function $F_{\nu,j_{0}}$ for some $j_{0}$ with $k-1\leq j_{0}\leq 2^{m}-2$. Then we define $F_{\nu,j_{0}+1}(x_{0},x_{1},\ldots,x_{q-1})$ to be the function satisfying
\begin{equation}\label{definition of l-th}
\sum_{l=0}^{k}(-1)^{k-l}\binom{k}{l}F_{\nu,j_{0}+1-k+l}(x_{0},x_{1},\ldots,x_{q-1})\\
=P_{\nu_{j_{0}+1-k}}(j_{0}+1-k),
\end{equation}
where
\begin{equation}\label{defintion of P}
P_{v}(j_{0}+1-k)= \sum_{r=0}^{d-1} x_{k+(v-1)d+r} \prod_{\substack{0\le s\le d-1,\\s\neq r }}\frac{(j_{0}+1-k)- s}{ r-s},\;1\leq v\leq w,
\end{equation}
when $d\geq 2$; $P_{v}(j_{0}+1-k)=0$ when $d=0$; $P_{v}(j_{0}+1-k)=x_{k+(v-1)d}$ when $d=1$. By equation (\ref{definition of l-th}), it follows from the inductive assumption that $F_{\nu,j_{0}+1}(x_{0},x_{1},\ldots,x_{q-1})$ is a linear function of $x_{0},x_{1},\ldots,x_{q-1}$.

Next, given $n$ with $(\eta(n2^m), \eta(n2^m+1), ..., \eta(n2^m+2^m-1))=\nu$ and $n2^{m}\geq L_{m}$.
Suppose that $L_{m_{0}}\le n2^m <L_{m_{0}+1}$ for some $m_{0}\ge m$ and $n2^m=n_02^{m_{0}}+ u$ with $0\le u\leq 2^{m_{0}}-2^{m}$.
Taking $y_{j}=\{Y_{n_02^{m_{0}} }(n2^m+j)\}$ for $0\le j\le k-1$ and $y_{k+(v-1)d+r}=\{p_{v}(n2^{m}+r)\}$ for $1\leq v\leq w$, $0\leq r\leq d-1$. In the following, we show that
\begin{equation}\label{the reason to introduce hyperplanes}
\{F_{\nu,j}(y_{0},y_{1},\ldots,y_{q-1})\}=\{Y_{n_02^{m_{0}}}(n2^m+j)\},\;j=0,\dots,2^{m}-1.
\end{equation}
By the definition,
\begin{equation}\label{0831}
F_{\nu,j}(y_{0},y_{1},\ldots,y_{q-1})=\{Y_{n_{0}2^{m_{0}}}(n2^{m}+j)\},\;j=0,\ldots,k-1.
\end{equation}
Plugging $(y_{0},y_{1},\ldots,y_{q-1})$ into equation (\ref{defintion of P}), we have for $j\geq k$,
\[P_{v}(j-k)= \sum_{r=0}^{d-1} \{p_{v}(n2^{m}+r)\} \prod_{\substack{0\le s\le d-1,\\s\neq r }}\frac{(j-k)- s}{r-s}.\]
By Lemma \ref{equivalent conditions}, $\{P_{v}(j-k)\}=\{p_{v}(n2^{m}+j-k)\}$. Then by equation (\ref{definition of l-th}),
\begin{equation}\label{for j greater than k-1}
\{\sum_{l=0}^{k}(-1)^{k-l}\binom{k}{l}F_{\nu,j-k+l}(y_{0},y_{1},\ldots,y_{q-1})\}=\{p_{\nu_{j-k}}(n2^{m}+j-k)\}\:,\;j=k,\ldots,2^{m}-1.
\end{equation}
Using the condition (\ref{0901shi2}), we have
\[\sum_{l=0}^{k}(-1)^{k-l}\binom{k}{l}Y_{n_{0}2^{m_{0}}}(n2^{m}+j-k+l)=g(n2^{m}+j-k)=p_{\nu_{j-k}}(n2^{m}+j-k),\;j=k,\ldots,2^{m}-1.\]
Comparing with equation (\ref{for j greater than k-1}) and by (\ref{0831}), we conclude that equation $(\ref{the reason to introduce hyperplanes})$ holds.

Note that $|j-k|\leq 2^{m}$. Then by equation (\ref{defintion of P}), $|P_{v}(j-k)|\le (d+1)2^{md}$ when $d=0$, or $d\geq 1$, $x_{k+(v-1)d+r}\in [0,1)$, $r=0,\ldots,d-1$. By equation (\ref{definition of l-th}) and Proposition \ref{estimate of the value at every point},
we have
\begin{equation}\label{bound for hyperplanes}
|F_{\nu,j}(y_{0},y_{1},\ldots,y_{q-1})|\leq (k+1)j^{k}(d+1)2^{md}<(k+1)(d+1)2^{mk+md}\;,\;j=0,1,\ldots,2^{m}-1.
\end{equation}
Based on the linear functions $F_{\nu,0},\ldots,F_{\nu,2^{m}-1}$ constructed above, we define a family of hyperplanes $\mathcal{F}=\{H_{M,t,j}:M,t,j\in \mathbb{Z},\;-(k+1)(d+1)2^{m(k+d)}-1\leq M\leq (k+1)(d+1)2^{m(k+d)},\; 0\leq t\leq N-1, \;0\leq j\leq 2^{m}-1 \}$, where
\[H_{M,t,j}= \{(x_0,x_1,...,x_{q-1}) \in \R^{q} \;:\;F_{\nu,j}(x_0,x_1,...,x_{q-1}) =M+\frac{t}{N}\}.\]
Then $|\mathcal{F}|\leq 2(k+2)(d+1)2^{m(k+d+1)}N$. By Lemma \ref{0406lem:fengekongjian}, there are at most $W(|\mathcal{F}|,q)$ pieces of
$\mathbb{R} ^{q}$ cut by the hyperplanes in $ \mathcal{F}$,
where
\begin{equation}\label{bound for number of pieces}
W(|\mathcal{F}|,q)\leq (q+1)2^q \big(2(k+2)(d+1)2^{m(k+d+1)}N\big)^q.
\end{equation}

Now, we are ready to estimate $|\mathcal{A}_{\nu,m}|$. Let $(g_N(n2^{m}),...,g_N(n2^{m}+2^{m}-1))\in \mathcal{A}_{\nu,m}$ with $n2^{m}\geq L_{m}$. Then $(\eta(n2^{m}),...,\eta(n2^{m}+2^{m}-1))=\nu$. Suppose that $L_{m_{0}}\le n2^m <L_{m_{0}+1}$ for some $m_{0}\ge m$ and $n2^m=n_02^{m_{0}}+ u$ with $0\le u\leq 2^{m_{0}}-2^{m}$. Set  \[y_{j}=\{Y_{n_02^{m_{0}} }(n2^m+j)\}\] for $0\le j\le k-1$ and \[y_{k+(v-1)d+r}=\{p_{v}(n2^{m}+r)\}\]for $1\leq v\leq w$, $0\leq r\leq d-1$.
By formula (\ref{bound for hyperplanes}), there are integers $ M_0,M_1,...,M_{2^{m}-1} \in[-(k+1)(d+1)2^{m(k+d)}-1,(k+1)(d+1)2^{m(k+d)}]$ and  $t_0,t_1,\ldots,t_{2^{m}-1}\in [0,N-1]$ such that
\begin{equation}\label{such a peice}
M_j+\frac{t_j}{N}\le F_{\nu,j}(y_{0},y_{1},\ldots,y_{q-1})< M_j+\frac{t_j+1}{N},\; j=0,\ldots,2^{m}-1.
\end{equation}
Note that any two pieces of $\mathbb{R} ^{q}$ cut by hyperplanes in $\mathcal{F}$ are disjoint. Let $P$ be the unique piece containing the point $(y_{0},y_{1},\ldots,y_{q-1})$. Then it is not hard to check that formula (\ref{such a peice}) holds for each $(x_{0},x_{1},\ldots,x_{q-1})\in P$. By equation (\ref{the reason to introduce hyperplanes}), $\{F_{\nu,j}(y_{0},y_{1},\ldots,y_{q-1})\}=\{Y_{n_{0}2^{m_{0}}}(n2^{m}+j)\}\in [\frac{t_j}{N},\frac{t_j+1}{N})$. Then by formula (\ref{definition of gN}), $g_{N}(n2^{m}+j)=\frac{t_{j}}{N}$, $j=0,\ldots,2^{m}-1$.
Moreover, from the above analysis, we conclude that if $n,n'\in \mathbb{N}$ with $(\eta(n2^{m}),\ldots,\eta(n2^{m}+2^{m}-1))=(\eta(n'2^{m}),\ldots,\eta(n'2^{m}+2^{m}-1))=\nu$, such that the corresponding points $(y_{0},y_{1},\ldots,y_{q-1})$ and $(y_{0}',y_{1}',\ldots,y_{q-1}')$ belong to the same piece of $\mathbb{R}^{k}$, then $(g_N(n2^{m}),\ldots,g_N(n2^{m}+2^{m}-1))=(g_N(n'2^{m}),\ldots,g_N(n'2^{m}+2^{m}-1))$. Hence
\[|\mathcal{A}_{\nu,m}|\leq W(|\mathcal{F}|,q).\]
So by equation (\ref{blocks in g2}) and formula (\ref{bound for number of pieces}),
\begin{equation}\label{the number of blocks in approximate functions}
|\mathcal{B}_{2^{m}}^{e,r}(g_{N})|\leq\sum_{\nu \in \mathcal{B}_{2^{m}}(\eta)}|\mathcal{A}_{\nu,m}|\leq (q+1)2^q \big(2(k+2)(d+1)2^{m(k+d+1)}N\big)^q|\mathcal{B}_{2^{m}}(\eta)|.
\end{equation}
Note that $N,k,d,q$ are parameters independent of $m$. By Proposition \ref{0330thmyouxiaozhengshang},
\[\AE(g_{N})=\lim_{m\rightarrow \infty}\frac{\log|\mathcal{B}_{2^{m}}^{e,r}(g_{N})|}{2^{m}}\leq \lim_{m\rightarrow \infty}\frac{\log|\mathcal{B}_{2^{m}}(\eta)|}{2^{m}}=\AE(\eta)=0.\]
So we complete the proof of this theorem.
\end{proof}
Next, we prove Theorem \ref{0601thm2}, which discusses about the anqie entropy of exponential functions of concatenations.
\begin{proof}[Proof of Theorem \ref{0601thm2}]
Given $N\geq 1$. By condition (\ref{0604shi1}) and Lemma \ref{find Y(n)}, for each integer $m\geq 1$, there is a sufficiently large $L_m\in \mathbb{N}$ with $2^m|L_m$,
such that for any $i\in \mathbb{N}$, whenever $n\ge L_m$, we have
\begin{equation}\label{080706}
\|f_{i}(n+j)-Y_{n,i}(n+j)\|\leq 1/N,\;j=0,1,\ldots,2^{m}-1,
\end{equation}
where $Y_{n,i}(n+j)$ equals $f_{i}(n+j)$ when $0\le j\le k-1$, and
is determined by the following linear equations when $k\le j\le 2^{m}-1$,
\begin{align}\label{0521shi2new}
\triangle^k Y_{n,i}(n+j)=g(n+j),\;j=0,1,...,2^{m}-k-1.
\end{align}
Moreover, we may further assume that the sequence $\{L_m\}_{m=0}^{\infty}$ ($L_0=0$) chosen above satisfies $L_{m+1}>L_m$ for each $m\geq 1$.
Let $d_m=(L_{m+1}-L_m)/2^m$. We define $Y_{L_{0}+a,i}(L_{0}+a)=f_{i}(L_{0}+a)$ for $i\in \mathbb{N}$ and $a=0,1,\ldots,L_{1,i}-1$, and a sequence $\{g_{N,i}\}_{i=0}^{\infty}$ of arithmetic functions with finite ranges as follows: for $a=0,1,\ldots,d_{m}-1$ and $j=0,1,\ldots,2^{m}-1$, define
\[g_{N,i}(L_m+a2^m+j)=\frac{t}{N}\]when $\{Y_{L_m+a2^m,i }(L_m+a2^m+j )\}\in [\frac{t}{N},\frac{t+1}{N})$ for some integer $t$ with $0\leq t\leq N-1$. By formula (\ref{080706}),
\[\|f_{i}(L_m+a2^m+j)-g_{N,i}(L_m+a2^m+j)\|<2/N.\]
Then \begin{equation}\label{approximation825}
\|f_{i}(n)-g_{N,i}(n)\|<2/N,\;\text{for}\; \text{any}\; n,\; i\in \mathbb{N}.
\end{equation}
Denote by \[\mathcal{C}_{m}=\{(g_{N,i}(n2^{m}),...,g_{N,i}(n2^{m}+2^{m}-1)):i\in \mathbb{N}, n\in \mathbb{N},\;n2^{m}\geq L_{m}\}.\]
Recall that \begin{equation}
g(n)=\sum_{v=1}^w 1_{S_v}(n) p_v(n),
\end{equation}
where each $1_{S_{v}}(n)$ is deterministic (i.e., $\AE(1_{S_{v}})=0$ by Proposition \ref{an equivlent condition of Sarnak's conjecture}). Define the arithmetic function $\eta$ by $\eta(n)=v$ if $n\in S_v $, $v=1,\ldots,w$. Then $\AE(\eta)=0$.
Recall that $\mathcal{B}_{{2^{m}}}(\eta)$ denotes the set of all $2^{m}$-blocks occurring in $\eta$. Given $\nu\in \mathcal{B}_{{2^{m}}}(\eta)$,
denote by $\widetilde{\mathcal{A}}_{\nu,m}$ the set
\[\{(g_{N,i}(n2^{m}),...,g_{N,i}(n2^{m}+2^{m}-1)):i\in \mathbb{N},\;n\in \mathbb{N},\;n2^{m}\geq L_{m},\;(\eta(n2^{m}),...,\eta(n2^{m}+2^{m}-1))=\nu \}\;.\]
Then
\begin{equation}
\mathcal{C}_{m}\subseteq \bigcup_{\nu\in \mathcal{B}_{2^{m}}(\eta)} \widetilde{\mathcal{A}}_{\nu,m}.
\end{equation}
Let $d=\max (deg (p_1),...,deg (p_w))+1$ and $q=k+wd$.
It follows, from a similar argument to the proof of formula (\ref{the number of blocks in approximate functions}) in Theorem \ref{a large class of functions with zero anqie entropy}, that
\begin{equation}\label{the number of blocks in approximate functions2}
|\mathcal{C}_{m}|\leq\sum_{\nu \in \mathcal{B}_{2^{m}}(\eta)}|\widetilde{\mathcal{A}}_{\nu,m}|\leq (q+1)2^q (2(k+2)(d+1)2^{m(k+d+1)}N)^q|\mathcal{B}_{2^{m}}(\eta)|
\end{equation}
for any $m$ with $2^{m}>\max(k+1,d)$.
Suppose that $f(n)$ is the concatenation of $\{f_{i}(n)\}_{i=0}^{\infty}$ with respect $\{N_{i}\}_{i=0}^{\infty}$.
Let $g_N(n)$ be the concatenation of $\{g_{N,i}\}_{i=0}^{\infty}$ with respect to $\{N_{i}\}_{i=0}^{\infty}$, i.e.,
\[g_N(n)=g_{N,i}(n)\;\;\;\text{ if }\;\;\;N_i\le n< N_{i+1}\;.\]
By formula (\ref{approximation825}),
\[\|g_N(n)-f(n)\|<2/N,\;\text{for}\;\text{any}\; n\in \N.\]
This implies that $\lim_{N\rightarrow \infty}\sup_{n\in \mathbb{N}}|e(g_{N}(n))-e(f(n))|=0$. So by Propositions \ref{some properties about anqie entropy} and \ref{semicontinuity}, to prove $e(f(n))$ deterministic, it suffices to prove $\AE(g_{N})=0$ for $N$ large enough.

In the following, to show $\AE(g_{N})=0$, we estimate $|\mathcal{B}_{2^{m}}^{e,r}(g_{N})|$ for any given $m$ with $2^{m}>\max(k+1,d)$, where $|\mathcal{B}_{2^{m}}^{e,r}(g_{N})|$ is the cardinality of all regularly effective $2^m$-blocks occurring in $g_N$. Let $i_{m}$ be large enough such that $N_{i+1}-N_{i}>2^{m}$ whenever $i>i_{m}$. Let $(g_{N}(n2^{m}),g_{N}(n2^{m}+1),\ldots,g_{N}(n2^{m}+2^{m}-1))$ be a $2^m$-block in $g_N$ with $n2^{m}>\max\{L_{m},N_{i_{m}}\}$. It is easy to see that there are two cases about this block: one case is that there is an $i_{n}$ such that $(g_{N}(n2^{m}),g_{N}(n2^{m}+1),\ldots,g_{N}(n2^{m}+2^{m}-1))=(g_{N,i_{n}}(n2^{m}),g_{N,i_{n}}(n2^{m}+1),\ldots,g_{N,i_{n}}(n2^{m}+2^{m}-1))$; the other case is that there are two integers $i_{n}\geq 0$ and $j_{n}\geq 1$ such that $g_{N}(n2^{m}+j)=g_{N,i_{n}}(n2^{m}+j)$ when $j=0,1,\ldots,j_{n}-1$ and $g_{N}(n2^{m}+j)=g_{N,i_{n}+1}(n2^{m}+j)$ when $j=j_{n},j_{n}+1,\ldots,2^{m}-1$. So $|\mathcal{B}_{2^{m}}^{e,r}(g_{N})|$ is less than or equal to $2^m\times |\mathcal{C}_{m}|^2$.
Note that $N,k,d,q$ are parameters independent of $m$. Then by formula (\ref{the number of blocks in approximate functions2}),
\[\AE(g_{N})=\lim_{m\rightarrow\infty} \frac{\log|\mathcal{B}_{2^{m}}^{e,r}(g_{N})|}{2^m}\le \lim_{m\rightarrow\infty}
\frac{\log(|\mathcal{B}_{2^{m}}(\eta)|^2)}{2^{m}}=2\AE(\eta)=0.\]
Now, we complete the proof of the theorem.
\end{proof}

As an application of Theorem \ref{0601thm2},
we show that under the assumption of SMDC, for any deterministic sequence $\xi(n)$,
$\mu(n)\xi(n)$ does not correlate with $e(f(n))$ in short intervals on average when $f(n)$ satisfies certain conditions.
Precisely,
\begin{theorem}\label{SMDC implies the distribution of moebius in short interval}
Let $g(n)$ be defined as in Theorem \ref{a large class of functions with zero anqie entropy}.
Suppose that $\mathcal{D}$ is a family of real-valued arithmetic functions such that
\begin{align}
\lim_{n\rightarrow\infty} \sup_{f\in \mathcal{D}} \; \|\triangle^k f(n)-g(n)\|=0
\end{align}
holds for some $k\geq 1$. Then SMDC implies that, for any deterministic sequence $\xi(n)$,
\begin{equation}\label{the correlation in short intervals}
\lim_{h\rightarrow \infty}\limsup_{X\rightarrow \infty}\frac{1}{Xh}\int_{X}^{2X} \sup_{f\in\mathcal{D}}\Bigg|\sum_{x\leq n<x+h}\mu(n)\xi(n)e(f(n))\Bigg|dx=0.
\end{equation}
\end{theorem}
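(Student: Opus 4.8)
The plan is to argue by contradiction, packaging the (a priori uncountable) supremum over $\mathcal{D}$ together with the short windows into a single deterministic sequence, to which Theorem \ref{0601thm2} and SMDC (Conjecture \ref{SMDC}) then apply. Suppose the conclusion fails. Then there are $\delta>0$ and integers $h_1<h_2<\cdots$ with $h_m\to\infty$ such that for every $m$ the quantity $\limsup_{X\to\infty}\frac{1}{Xh_m}\int_X^{2X}\sup_{f\in\mathcal{D}}|\sum_{x\le n<x+h_m}\mu(n)\xi(n)e(f(n))|\,dx\ge\delta$. Choosing a rapidly increasing sequence $X_1<X_2<\cdots$ with $X_{m+1}>2X_m+h_m$, we may assume that the average over $[X_m,2X_m)$ at window length $h_m$ is at least $\delta/2$. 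The whole argument will hinge on the identity $\sum_{X_m\le n<2X_m}\mu\xi\,e(F)=\sum_{n<2X_m}\mu\xi\,e(F)-\sum_{n<X_m}\mu\xi\,e(F)$: once $\xi(n)e(F(n))$ is shown to be deterministic, SMDC forces the right-hand side to be $o(X_m)$, whereas my construction of $F$ will make the left-hand side at least $(\delta/4)X_m$.

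To build $F$ on the block $[X_m,2X_m)$ I first extract a disjoint tiling that realizes this average. Since each integrand is bounded by $1$, discretizing $x$ to integers gives $\sum_{X_m\le x<2X_m}\sup_{f\in\mathcal{D}}|\sum_{x\le n<x+h_m}\mu\xi\,e(f)|\ge(\delta/2)X_mh_m$. Averaging over the $h_m$ shifts $b=0,\dots,h_m-1$ of the disjoint tilings $\{[X_m+b+ah_m,\,X_m+b+(a+1)h_m):a\ge0\}$ and applying the pigeonhole principle, I obtain a single tiling $T_1,T_2,\dots$ of (essentially) $[X_m,2X_m)$ into windows of length $h_m$ with $\sum_j\sup_{f\in\mathcal{D}}|\sum_{n\in T_j}\mu\xi\,e(f)|\ge(\delta/2)X_m$. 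On each tile $T_j$ I select $f_j\in\mathcal{D}$ attaining at least half of the supremum, and then a constant $c_j\in\mathbb{R}$ with $e(c_j)\sum_{n\in T_j}\mu(n)\xi(n)e(f_j(n))=|\sum_{n\in T_j}\mu(n)\xi(n)e(f_j(n))|\ge0$. Setting $F=f_j+c_j$ on $T_j$ makes the contribution of $[X_m,2X_m)$ to the sum equal to $\sum_j|\sum_{n\in T_j}\mu\xi\,e(f_j)|\ge(\delta/4)X_m$.

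It remains to splice these blocks into a genuine concatenation satisfying the hypotheses of Theorem \ref{0601thm2}. On each gap $[2X_m,X_{m+1})$ (and on $[0,X_1)$) I let $F$ equal a single fixed member of $\mathcal{D}$ plus a constant, and I list all blocks in increasing order of position. Two facts must be verified. First, every block of $F$ has the form $f+c$ with $f\in\mathcal{D}$ and $c\in\mathbb{R}$; since $k\ge1$ we have $\triangle^k(f+c)=\triangle^kf$, so the family of blocks satisfies $\sup_i\|\triangle^kF_i(n)-g(n)\|\le\sup_{f\in\mathcal{D}}\|\triangle^kf(n)-g(n)\|\to0$, which is exactly condition (\ref{0604shi1}); hence $e(F(n))$ is deterministic by Theorem \ref{0601thm2}. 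Second, the block lengths tend to infinity: each region $[X_m,2X_m)$ contributes only finitely many blocks (of length $h_m$, plus at most two shorter boundary pieces), $h_m\to\infty$, and the gap lengths grow, so for any bound $L$ only finitely many blocks are shorter than $L$, giving $\lim_i(N_{i+1}-N_i)=\infty$ as required by the definition of concatenation. Since the deterministic sequences are closed under multiplication, $\xi(n)e(F(n))$ is deterministic; SMDC then yields $\sum_{n<N}\mu\xi\,e(F)=o(N)$, and subtracting the values at $N=X_m$ and $N=2X_m$ gives $\sum_{X_m\le n<2X_m}\mu\xi\,e(F)=o(X_m)$, contradicting the lower bound $(\delta/4)X_m$ from the second paragraph. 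This contradiction proves the theorem.

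The main obstacle, and the only genuinely delicate point, is reconciling the windowwise choice of near-maximizers $f_j$ with the growing-block requirement of Theorem \ref{0601thm2}: a single function with $\triangle^kF\approx g$ cannot independently realize an arbitrary maximizer on each length-$h_m$ window, which is exactly why the concatenation — resetting the $k$ free initial values at every block — is indispensable, and why one must confirm that no bounded block length recurs infinitely often (it does not, since each region is a finite interval). The accompanying phase-alignment step, legitimate precisely because $k\ge1$ permits adding constants without disturbing the closeness of $\triangle^kf$ to $g$, is what upgrades the harmless ``sum of absolute values'' bound into a real lower bound for one aligned sum, thereby producing the contradiction with SMDC.
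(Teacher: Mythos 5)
Your strategy is in substance the paper's own, just run in reverse: the paper proves the block-sum statement for an arbitrary sequence of blocks $\{N_i\}$ and arbitrary $\{f_i\}\subset\mathcal{D}$ (phase-aligning each block by a constant $\theta_i$, which is harmless because $k\geq 1$, then applying Theorem \ref{0601thm2} and SMDC), and converts that into the short-interval integral statement via the equivalence of Lemma \ref{disjointness and distribution in short interval}; the proof of that lemma's direction (i)$\Rightarrow$(ii) is exactly your contradiction argument with the pigeonhole over the $h_m$ shifted tilings. So all of your ingredients match the paper's; the problem lies in one verification that you state incorrectly.

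The gap is in your claim that the spliced function $F$ is a legitimate concatenation, i.e.\ that $\lim_i(N_{i+1}-N_i)=\infty$. Your tiling of each region $[X_m,2X_m)$ produces, besides full windows of length $h_m$, up to two boundary pieces whose lengths you do not control: the offset $b$ is handed to you by the pigeonhole, so the initial piece $[X_m,X_m+b)$ can have length $1$ (say) for infinitely many $m$. Since there are infinitely many regions, your assertion that ``for any bound $L$ only finitely many blocks are shorter than $L$'' is false for the construction as described; the block lengths need not tend to infinity, and Theorem \ref{0601thm2} cannot be invoked. The repair is routine but must be made: either merge each boundary piece into the adjacent gap block (on which $F$ is already a fixed member of $\mathcal{D}$ plus a constant, so condition (\ref{0604shi1}) is unaffected), or, as the paper does inside Lemma \ref{disjointness and distribution in short interval}, use only complete windows of length $h_m$, letting them begin slightly after $X_m$ and overshoot $2X_m$, so every block inside a region has length exactly $h_m$. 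In either case the phase-aligned lower bound degrades only by $O(h_m\|\xi\|_{\infty})=o(X_m)$ (note also that your ``each integrand is bounded by $1$'' should be $h_m\|\xi\|_{\infty}$ --- a harmless slip, since $\xi$ is bounded but not necessarily $1$-bounded), and both the application of Theorem \ref{0601thm2} and the contradiction with SMDC then go through. With this repair your proof is correct and essentially reproduces the paper's argument with Lemma \ref{disjointness and distribution in short interval} inlined.
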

\begin{proof}
Let $ \{N_i\}_{i=0}^\infty$ be a given sequence of natural numbers with $N_0=0$ and $\lim_{i\rightarrow\infty} (N_{i+1}-N_i)=\infty$, and $\{f_{i}(n)\}_{i=0}^{\infty}$ be a sequence in $\mathcal{D}$, choose $\{\theta_{i}\}_{i=0}^{\infty}$ as a sequence of numbers in $[0,1)$ such that \[\Bigg|\sum_{N_{i}\leq n<N_{i+1}}\mu(n)\xi(n)e(f_{i}(n))\Bigg|=\Bigg(\sum_{N_{i}\leq n<N_{i+1}}\mu(n)\xi(n)e(f_{i}(n))\Bigg)e(\theta_{i}).\] Define $\widetilde{f_{i}}(n)=f_{i}(n)+\theta_{i}$. Then $\{\widetilde{f_{i}}(n)\}_{i=0}^{\infty}$ satisfies condition (\ref{the correlation in short intervals}) since $k\geq 1$. Let $\widetilde{f}(n)$ be the concatenation of $\{\widetilde{f_{i}}(n)\}_{i=0}^{\infty}$ with respect to the sequence $\{N_{i}\}_{i=0}^{\infty}$. Let $f(n)=e(\widetilde{f}(n))\xi(n)$. By Proposition \ref{some properties about anqie entropy} and Theorem \ref{0601thm2}, $f(n)$ is deterministic. Hence SMDC implies
\begin{align*}
\lim_{m\rightarrow \infty}\frac{1}{N_{m}}\sum_{i=0}^{m-1}\Bigg|\sum_{{N_{i}}\leq n<N_{i+1}}\mu(n)\xi(n)e(f_{i}(n))\Bigg|=0.
\end{align*}
Note that the above equation holds for any sequence $\{N_i\}_{i=0}^\infty$ of natural numbers and any sequence $\{f_{i}(n)\}_{i=0}^{\infty}$ in $\mathcal{D}$.
By Lemma \ref{disjointness and distribution in short interval}, this implies that
\[\lim_{h\rightarrow \infty}\limsup_{X\rightarrow \infty}\frac{1}{Xh}\int_{X}^{2X} \sup_{f\in\mathcal{D}}\Bigg|\sum_{x\leq n<x+h}\mu(n)\xi(n)e(f(n))\Bigg|dx=0,\]
as we claimed.
\end{proof}
If we take $\xi(n)=1_{n\equiv a(mod~q)}(n)$ and $\mathcal{D}$ the set of all polynomials of degrees less than a given positive integer in Theorem \ref{SMDC implies the distribution of moebius in short interval}, then we have the following corollary.
\begin{corollary}\label{in terms of distribution of polynomial phases}
Let $k\geq 1$ be a given integer. Denote by $\mathcal{D}_{k}$ the set of all polynomials in $\mathbb{R}[y]$ of degrees less than $k$. Let $q\geq 1$ and $a\geq 0$ be given integers. Then SMDC implies
\begin{equation}\label{0415}
\lim_{h\rightarrow \infty}\limsup_{X\rightarrow \infty}\frac{1}{Xh}\int_{X}^{2X} \sup_{p(y)\in \mathcal{D}_{k}}\Bigg|\sum_{\substack{x\leq n<x+h\\n\equiv a(mod~q)}}\mu(n)e(p(n))\Bigg|dx=0.
\end{equation}

\end{corollary}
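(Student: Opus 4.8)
The plan is to deduce the corollary as a direct specialization of Theorem \ref{SMDC implies the distribution of moebius in short interval}, taking $\xi$ to be the indicator of the residue class and $\mathcal{D}=\mathcal{D}_k$. First I would check that $\mathcal{D}_k$ meets the uniform $k$-th difference hypothesis. The key observation is that the $k$-th difference operator annihilates every polynomial of degree less than $k$: each application of $\triangle$ lowers the degree by one, so $\triangle^k p\equiv 0$ for every $p\in\mathcal{D}_k$. Hence, taking $w=1$, $p_1\equiv 0$ and $S_1=\mathbb{N}$ in the definition of $g$ (so that $g\equiv 0$, with $1_{S_1}\equiv 1$ trivially deterministic), we obtain $\sup_{p\in\mathcal{D}_k}\|\triangle^k p(n)-g(n)\|=0$ for \emph{every} $n$, and in particular the limit as $n\to\infty$ vanishes. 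Thus the hypothesis of Theorem \ref{SMDC implies the distribution of moebius in short interval} holds with $g\equiv 0$ and $\mathcal{D}=\mathcal{D}_k$.

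Next I would verify that $\xi(n)=1_{n\equiv a(\mathrm{mod}~q)}(n)$ is deterministic. This sequence is periodic with period $q$ and takes finitely many values, so the number of distinct $J$-blocks occurring in it is at most $q$ for every $J$. Consequently $\AE(\xi)=\lim_{J\to\infty}\frac{\log|\mathcal{B}_J(\xi)|}{J}=0$, and $\xi$ is deterministic by Proposition \ref{an equivlent condition of Sarnak's conjecture}. (Equivalently, $\xi$ is realized by the rotation on $\mathbb{Z}/q\mathbb{Z}$, a finite system of zero topological entropy.)

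With both hypotheses in place, applying Theorem \ref{SMDC implies the distribution of moebius in short interval} under SMDC yields
\[
\lim_{h\to\infty}\limsup_{X\to\infty}\frac{1}{Xh}\int_X^{2X}\sup_{p(y)\in\mathcal{D}_k}\left|\sum_{x\leq n<x+h}\mu(n)\xi(n)e(p(n))\right|dx=0.
\]
Finally I would rewrite the inner sum: since $\mu(n)\xi(n)e(p(n))$ equals $\mu(n)e(p(n))$ when $n\equiv a(\mathrm{mod}~q)$ and vanishes otherwise, the sum over $x\leq n<x+h$ collapses to the sum over those $n$ in the interval satisfying $n\equiv a(\mathrm{mod}~q)$, which is exactly the expression appearing in \eqref{0415}. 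This gives the claimed identity.

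I do not expect a genuine obstacle here: the substance is entirely contained in Theorem \ref{SMDC implies the distribution of moebius in short interval}, and the corollary is a clean specialization. The only points demanding care — that $\triangle^k$ annihilates $\mathcal{D}_k$, that $g\equiv 0$ is an admissible choice of the form required by Theorem \ref{a large class of functions with zero anqie entropy}, and that a periodic indicator is deterministic — are all immediate.
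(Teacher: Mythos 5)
Your proposal is correct and is exactly the paper's argument: the paper obtains the corollary by specializing Theorem \ref{SMDC implies the distribution of moebius in short interval} to $\xi(n)=1_{n\equiv a(mod~q)}(n)$ and $\mathcal{D}=\mathcal{D}_{k}$, just as you do. The verifications you spell out (that $\triangle^k$ annihilates $\mathcal{D}_k$ so the hypothesis holds with $g\equiv 0$, that the periodic indicator is deterministic, and that the weighted sum collapses to the congruence-restricted sum) are precisely the routine checks the paper leaves implicit.
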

As observed in \cite{Tao}, equation (\ref{0415}) is implied by the local higher order Fourier uniformity conjecture, which is deduced from the Chowla conjecture (\cite{Ch}, see also \cite{Ng}). It is known that Chowla's conjecture implies SMDC. Corollary \ref{in terms of distribution of polynomial phases} shows that equation (\ref{0415}) can be deduced from SMDC.

Here are some results relevant to equation (\ref{0415}).
For $k=1$, equation (\ref{0415}) has been obtained from the work of Matom\"{a}ki-Radziwi{\l}{\l} (\cite{KM}). For $k\geq 2$, it is open whether (\ref{0415}) holds, while recently Matom\"{a}ki-Radziwi{\l}{\l}-Tao-Ter\"{a}v\"{a}inen-Ziegler in \cite{MRT20} established equation (\ref{0415}) when $h=X^{\theta}$ for any fixed $\theta>0$.
Without taking the average on $X$ in equation (\ref{0415}), let $h=X^{\theta}$, the case that $k=1$ and $\theta>0.55$ was obtained by Matom\"{a}ki-Ter\"{a}v\"{a}inen in \cite{MT}; the case that $k=2$ and $\theta> 5/8$ was previously established by Zhan in \cite{zhan} and extended to $\theta>3/5$ in \cite{MT}; the case that $k\geq 2$ and $\theta>2/3$ was obtained by Matom\"{a}ki-Shao in \cite{MS} (also see \cite{huang}, \cite{LZ} for related results on $\Lambda(n)$ instead of $\mu(n)$).

Using a similar idea to the proof of Theorem \ref{a large class of functions with zero anqie entropy}, we obtain the following proposition that gives many characteristic functions with zero anqie entropy.
\begin{proposition}\label{zero entropy set}
Let $p_{1}(y), p_{2}(y)\in \mathbb{R}[y]$. Suppose that
\[S=\{n\in \mathbb{N}:\{p_{1}(n)\}<\{p_{2}(n)\}\}.\]
Then $1_{S}(n)$, the characteristic function defined on $S$, is a deterministic sequence.
\end{proposition}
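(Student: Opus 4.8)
The plan is to adapt the piece-counting scheme from the proof of Theorem~\ref{a large class of functions with zero anqie entropy}. Since $1_S$ takes only the values $0$ and $1$, its anqie entropy is given by formula~\eqref{the definition of entropy}, so it suffices to show that the number $|\mathcal{B}_J(1_S)|$ of distinct $J$-blocks occurring in $1_S$ grows only polynomially in $J$. First I would set $d=\max(\deg p_1,\deg p_2)+1$ and $q=2d$, and attach to each starting index $n$ the point
\[
P(n)=\big(\{p_1(n)\},\ldots,\{p_1(n+d-1)\},\{p_2(n)\},\ldots,\{p_2(n+d-1)\}\big)\in\mathbb{R}^{q}.
\]
The key structural fact is that, since $p_1,p_2$ have degree $<d$, Newton's forward difference formula produces fixed integers $c_{j,r}$ (independent of $n$, and even of the polynomials) with $p_i(n+j)=\sum_{r=0}^{d-1}c_{j,r}\,p_i(n+r)$ for every $j\ge 0$. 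Taking fractional parts kills the integer contributions, so $\{p_i(n+j)\}=\{L_{i,j}(P(n))\}$, where $L_{1,j}(x)=\sum_{r}c_{j,r}x_r$ and $L_{2,j}(x)=\sum_{r}c_{j,r}x_{d+r}$ are fixed nonzero linear forms on $\mathbb{R}^{q}$ (nonzero because the interpolation weights $c_{j,r}$ cannot all vanish, as they reproduce the constant polynomial $1$). Consequently the entire block $(1_S(n),\ldots,1_S(n+J-1))$ is a function of the single point $P(n)$.

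The heart of the argument is the second step: converting the comparisons $\{L_{1,j}\}<\{L_{2,j}\}$, which are nonlinear because of the wraparound of the fractional part, into half-space conditions. Since $x_r\in[0,1)$ and $|c_{j,r}|\ll J^{d-1}$ for $0\le j<J$, the forms $L_{1,j}$, $L_{2,j}$ and their difference are bounded on $[0,1)^{q}$ by some $B\ll J^{d-1}$. For each $j$ I would introduce the hyperplanes $\{L_{1,j}=M\}$, $\{L_{2,j}=M\}$ and $\{L_{1,j}-L_{2,j}=M\}$ for all integers $M$ with $|M|\le B+1$. On any piece of $\mathbb{R}^{q}$ cut out by this whole family (in the sense of Definition~\ref{definition of pieces}) the integer parts $\lfloor L_{1,j}\rfloor$ and $\lfloor L_{2,j}\rfloor$ are constant, say equal to $M_1$ and $M_2$, and the sign of $L_{1,j}-L_{2,j}-(M_1-M_2)$ is constant; since on that piece $\{L_{1,j}\}<\{L_{2,j}\}$ is equivalent to $L_{1,j}-L_{2,j}<M_1-M_2$, the value $1_S(n+j)$ is constant on each piece. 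Hence the piece containing $P(n)$ determines the whole $J$-block, which gives a surjection from pieces onto $\mathcal{B}_J(1_S)$, so $|\mathcal{B}_J(1_S)|$ is at most the number of pieces.

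Finally I would count. The total number $m$ of hyperplanes is $\ll J\cdot J^{d-1}=J^{d}$, so by Lemma~\ref{0406lem:fengekongjian} the number of pieces is at most $W(m,q)\le(q+1)2^{q}m^{q}\ll J^{dq}$, which is polynomial in $J$. Therefore
\[
\AE(1_S)=\lim_{J\to\infty}\frac{\log|\mathcal{B}_J(1_S)|}{J}\le\lim_{J\to\infty}\frac{dq\log J+O(1)}{J}=0,
\]
and $1_S$ is deterministic by Proposition~\ref{an equivlent condition of Sarnak's conjecture}.

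I expect the only genuine obstacle to be the second step: correctly packaging the fractional-part comparison as a single finite hyperplane arrangement that works uniformly in $n$, and verifying that the polynomially many integer shifts $M$ suffice and that all the forms involved are genuinely nonzero so that Definition~\ref{definition of pieces} and Lemma~\ref{0406lem:fengekongjian} apply. The interpolation identity and the final counting are routine, and the degenerate cases (for instance $p_1\equiv p_2$, or a constant polynomial, where $S$ is empty, cofinite, or eventually periodic) are absorbed by the same scheme.
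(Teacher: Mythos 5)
Your proposal is correct and follows essentially the same route as the paper's proof: you embed each starting index $n$ as the point of fractional parts in $\mathbb{R}^{2d}$, use the integrality of the Lagrange interpolation coefficients (the paper's Proposition \ref{coefficients are integers} and Lemma \ref{equivalent conditions}, which it packages through recursively defined forms $F_j$) to express all later fractional parts by fixed linear forms, introduce exactly the same three families of hyperplanes (the two forms and their difference at integer levels) so that the comparison $\{p_1(n+j)\}<\{p_2(n+j)\}$ is constant on each piece, and then count pieces with Lemma \ref{0406lem:fengekongjian} to get polynomial growth of $|\mathcal{B}_J(1_S)|$. The only differences are cosmetic (direct interpolation versus the paper's recursive definition of the forms, and a slightly different but equally valid bound on the coefficients), so your argument is in substance the paper's own.
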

\begin{proof}
Assume that the degrees of $p_{1}(n)$ and $p_{2}(n)$ are both less than $k$. Then $\triangle^{k}p_{1}(n)=\triangle^{k}p_{2}(n)=0$. In the following, for any given integer $J\geq k+1$, we estimate $|\mathcal{B}_{J}(1_{S})|$, the cardinality of the set of all $J$-blocks occurring in $1_{S}$.

Firstly, we define linear functions $F_{0},F_{1},\ldots, F_{J-1}:\mathbb{R}^{k}\rightarrow \mathbb{R}$. Define $F_{j}(x_{0},x_{1},\ldots,x_{k-1})$ to be $x_{j}$ when $j=0,1,\ldots,k-1$. Assume inductively that we have defined $F_{j_{0}}(x_{0},x_{1},\ldots,x_{k-1})$ for some $j_{0}\geq k-1$. Then define $F_{j_{0}+1}(x_{0},x_{1},\ldots,x_{k-1})$ to be the linear function satisfying
\begin{equation}\label{0808formula1}
\sum_{l=0}^{k}(-1)^{k-l}\binom{k}{l}F_{j_{0}+1-k+l}(x_{0},x_{1},\ldots,x_{k-1})=0,
\end{equation}
equivalently,
\[F_{j_{0}+1}(x_{0},x_{1},\ldots,x_{k-1})=-\sum_{l=0}^{k-1}(-1)^{k-l}\binom{k}{l}F_{j_{0}+1-k+l}(x_{0},x_{1},\ldots,x_{k-1}).\]
By Proposition \ref{formula for the k-th defference}, it is not hard to see that for $i=1,2$ and any $n\in \mathbb{N}$,
\begin{equation}\label{0808formula2}
\{F_{j}(\{p_{i}(n)\},\{p_{i}(n+1)\},\ldots,\{p_{i}(n+k-1)\})\}=\{p_{i}(n+j)\},\; j=0,1,\ldots,J-1.
\end{equation}
By Proposition \ref{estimate of the value at every point},
\begin{equation}\label{0808formula3}
|F_{j}(\{p_{i}(n)\},\{p_{i}(n+1)\},\ldots,\{p_{i}(n+k-1)\})|< (k+1)J^{k},j=0,1,\ldots,J-1.
\end{equation}

Secondly, we define a family of hyperplanes $\mathcal{F}=\{H_{L,j},H_{M,j},H_{N,j}:\; j,L,M,N\in \mathbb{Z},\;0\leq j\leq J-1,\;-2(k+1)J^{k}-1\leq L\leq 2(k+1)J^{k},\;-(k+1)J^{k}-1\leq M,N\leq (k+1)J^{k}\}$ in $\mathbb{R}^{2k}$, where
\[H_{L,j}=\{(x_{0},x_{1},\ldots,x_{2k-1})\in \mathbb{R}^{2k}:F_{j}(x_{0},x_{1},\ldots,x_{k-1})-F_{j}(x_{k},x_{k+1},\ldots,x_{2k-1})=L\},\]
\[H_{M,j}=\{(x_{0},x_{1},\ldots,x_{2k-1})\in \mathbb{R}^{2k}:F_{j}(x_{0},x_{1},\ldots,x_{k-1})=M\},\]
and
\[H_{N,j}=\{(x_{0},x_{1},\ldots,x_{2k-1})\in \mathbb{R}^{2k}:F_{j}(x_{k},x_{k+1},\ldots,x_{2k-1})=N\}.\]
Then
\[|\mathcal{F}|\leq 8(k+2)J^{k+1}.\]
By Lemma \ref{0406lem:fengekongjian}, there are at most $W(|\mathcal{F}|,2k)$ pieces of
$\mathbb{R} ^{2k}$ cut by the hyperplanes in $ \mathcal{F}$,
where
\[
W(|\mathcal{F}|,2k)\leq 8^{2k}(2k+1)2^{2k}(k+2)^{2k}J^{2k(k+1)}.
\]

Let $\mathcal{P}$ be the set of all pieces of $\mathbb{R}^{2k}$ cut by the hyperplanes in $\mathcal{F}$. Recall that $\mathcal{B}_{J}(1_{S})$ denotes the set of all $J$-blocks occurring in $1_{S}(n)$. Denote by $|\mathcal{B}_{J}(1_{S})|=C_{J}$. Suppose that $\mathcal{B}_{J}(1_{S})=\{B_{1},\ldots,B_{C_{J}}\}$. Let $n_{m}=\min\{n\in \mathbb{N}:(1_{S}(n),1_{S}(n+1),\ldots,1_{S}(n+J-1))=B_{m}\}$, for $m=1,\ldots,C_{J} $. Then $B_{m}=(1_{S}(n_{m}),1_{S}(n_{m}+1),\ldots,1_{S}(n_{m}+J-1))$. Define the map \[\psi:\mathcal{B}_{J}(1_{S})\rightarrow \mathcal{P}\] by $\psi(B_{m})=P_{m}$, where $P_{m}$ is the unique piece in $\mathcal{P}$ containing the point $(\{p_{1}(n_{m})\},\ldots,\{p_{1}(n_{m}+k-1)\},\{p_{2}(n_{m})\},\ldots,\{p_{2}(n_{m}+k-1)\})$, for $m=1,\ldots,C_{J} $. Since any two pieces in $\mathcal{P}$ are disjoint, $\psi$ is well-defined.

In the following, we show that $\psi$ is injective. Given $n\in \mathbb{N}$, let
\[(y_{0},\ldots,y_{k-1},y_{k},\ldots,y_{2k-1})=(\{p_{1}(n)\},\ldots,\{p_{1}(n+k-1)\},\{p_{2}(n)\},\ldots,\{p_{2}(n+k-1)\}).\]
Suppose that $(y_{0},\ldots,y_{k-1},y_{k},\ldots,y_{2k-1})\in P$, a piece of $\mathbb{R} ^{2k}$ cut by the hyperplanes in $ \mathcal{F}$. Then by (\ref{0808formula3}), there are integers $M_{0},\ldots,M_{J-1},N_{0},\ldots, N_{J-1}\in [-(k+1)J^{k}-1,(k+1)J^{k}]$ and $L_{0},\ldots,L_{J-1}\in [-2(k+1)J^{k}-1,2(k+1)J^{k}]$, such that
\begin{equation}\label{0808formula4}
L_{j}\leq F_{j}(y_{0},\ldots,y_{k-1})-F_{j}(y_{k},\ldots,y_{2k-1})<L_{j}+1,
\end{equation}
and
\begin{equation}\label{0808formula5}
M_{j}\leq F_{j}(y_{0},\ldots,y_{k-1})<M_{j}+1\;, N_{j}\leq F_{j}(y_{k},\ldots,y_{2k-1})<N_{j}+1.
\end{equation}
Moreover, the above inequalities also hold for each point in $P$. From formulas (\ref{0808formula4}) and (\ref{0808formula5}), it is not hard to see that for any given $j$ with $0\leq j\leq J-1$,
\[\{F_{j}(x_{0},x_{1},\ldots,x_{k-1})\}<\{F_{j}(x_{k},x_{k+1},\ldots,x_{2k-1})\} \]
holds for all $(x_{0},x_{1},\ldots,x_{2k-1})\in P$ (when $L_{j}=M_{j}-N_{j}-1$) or
\[\{F_{j}(x_{0},x_{1},\ldots,x_{k-1})\}\geq \{F_{j}(x_{k},x_{k+1},\ldots,x_{2k-1})\} \]
holds for all $(x_{0},x_{1},\ldots,x_{2k-1})\in P$ (when $L_{j}=M_{j}-N_{j}$). Then by equation (\ref{0808formula2}), we conclude that if $(\{p_{1}(n)\},\ldots,\{p_{1}(n+k-1)\},\{p_{2}(n)\},\ldots,\{p_{2}(n+k-1)\}),(\{p_{1}(n')\},\ldots,\{p_{1}(n'+k-1)\},\{p_{2}(n')\},\ldots,\{p_{2}(n'+k-1)\})\in P$, then $(1_{S}(n),\ldots,1_{S}(n+J-1))=(1_{S}(n'),\ldots,1_{S}(n'+J-1))$. So $\psi$ is injective. Hence $|\mathcal{B}_{J}(1_{S})|=|\psi(\mathcal{B}_{J}(1_{S}))|\leq W(|\mathcal{F}|,2k)\leq 8^{2k}(2k+1)2^{2k}(k+2)^{2k}J^{2k(k+1)}$. Then by formula (\ref{the definition of entropy}),
\[\AE(1_{S})=\lim_{J\rightarrow \infty}\frac{\log|\mathcal{B}_{J}(1_{S})|}{J}=0.\]
So $1_{S}(n)$ is deterministic by Proposition \ref{an equivlent condition of Sarnak's conjecture}.
\end{proof}

As an application of the above proposition, we give the following example that satisfies the condition in Theorem \ref{a large class of functions with zero anqie entropy} with $g(n)=\triangle^2f(n)\neq 0$.
\begin{example}\label{the entropy of a bracket polynomial}
Let $f(n)=\sqrt{3}n\{\sqrt{2}n\}$. Then \[\triangle^2f(n)=\left\{
  \begin{array}{ll}
    2\sqrt{3}(\sqrt{2}-1), & \hbox{$n\in S_{1}$}, \\
    2\sqrt{3}(\sqrt{2}-2), & \hbox{$n\in S_{2}$}, \\
    2\sqrt{3}(\sqrt{2}-1)+\sqrt{3}n, & \hbox{$n\in S_{3}$}, \\
    2\sqrt{3}(\sqrt{2}-2)-\sqrt{3}n, & \hbox{$n\in S_{4}$},
  \end{array}
\right.\]
where $S_{1}=\{n\in \mathbb{N}:\{\sqrt{2}(n+2)\}>\{\sqrt{2}(n+1)\}>\{\sqrt{2}n\}\}$, $S_{2}=\{n\in \mathbb{N}:\{\sqrt{2}(n+2)\}<\{\sqrt{2}(n+1)\}<\{\sqrt{2}n\}\}$, $S_{3}=\{n\in \mathbb{N}:\{\sqrt{2}(n+2)\}>\{\sqrt{2}(n+1)\},\;\{\sqrt{2}(n+1)\}<\{\sqrt{2}n\}\}$, $S_{4}=\{n\in \mathbb{N}:\{\sqrt{2}(n+2)\}<\{\sqrt{2}(n+1)\},\;\{\sqrt{2}(n+1)\}>\{\sqrt{2}n\}\}$.
\end{example}
\section{The M\"{o}bius disjointness of $e(f(n))$ with the $k$-th difference of $f(n)$ tending to zero}\label{disjointness}
In this section, we shall study the disjointness of the M\"{o}bius function from exponential functions of arithmetic functions with the $k$-th differences tending to a constant (Propositions \ref{k=0}, \ref{k=1 with finite accumulation points}, \ref{sufficient}, and Theorem \ref{k=1rapiddecreasingspeed}). We first show the following property that arithmetic functions with $k$-th differences tending to zero can be approximated by certain concatenations of polynomials of degrees less than $k$.
\begin{lemma}\label{approximated by polynomials}
Suppose that $f(n)$ is a real-valued arithmetic function such that
\[\lim_{n\rightarrow \infty}\|\triangle^{k}f(n)\|=0,\]
for some integer $k\geq 1$. Then for any integer $N\geq 1$, there is an increasing sequence $\{N_{i}\}_{i=0}^{\infty}$ of natural numbers with $N_{0}=0$ and $\lim_{i\rightarrow \infty}(N_{i+1}-N_{i})=\infty$, and a sequence $\{p_{i}(y)\}_{i=0}^{\infty}$ in $\mathbb{R}[y]$ of degrees less than $k$, such that
\begin{equation}\label{0814formula5}
\|f(n)-g_{N}(n)\|\leq 1/N,\;\text{for}\;\text{any}\;n\in \mathbb{N},
\end{equation}
where $g_{N}$ is the concatenation of $\{p_{i}(n)\}_{i=0}^{\infty}$ with respect to $\{N_{i}\}_{i=0}^{\infty}$.
\end{lemma}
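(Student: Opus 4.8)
The plan is to derive this directly from Lemma~\ref{find Y(n)} applied with the trivial polynomial data $g\equiv 0$. In that case the comparison function $Y_n$ furnished by the lemma satisfies $\triangle^k Y_n(n+j)=0$ throughout its block, and since the arithmetic functions annihilated by $\triangle^k$ are exactly the restrictions of real polynomials of degree less than $k$, the function $Y_n$ coincides on $\{n,n+1,\dots\}$ with the unique $p\in\mathbb{R}[y]$ of degree less than $k$ interpolating $f$ at the $k$ nodes $n,\dots,n+k-1$. Thus Lemma~\ref{find Y(n)} already says that, on long blocks starting far enough out, $f$ is $\varepsilon$-close (in $\|\cdot\|$) to a polynomial phase of degree less than $k$; the remaining task is purely combinatorial, namely to tile $\N$ by consecutive blocks whose lengths tend to infinity while each block begins beyond the threshold supplied by the lemma.

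Concretely, I would first fix $N\ge 1$ and apply Lemma~\ref{find Y(n)} with $g\equiv 0$ and $\varepsilon=1/N$: for every block length $m\ge 1$ this yields an integer $L_m$ such that, whenever $n>L_m$,
\[
\|f(n+j)-Y_n(n+j)\|\le 1/N,\qquad 0\le j\le m-1.
\]
Replacing $L_m$ by $\max\{L_1,\dots,L_m\}$ I may assume $(L_m)_m$ is nondecreasing. Then I build the partition greedily: set $N_0=0$ and, given $N_i$, put
\[
m_i=\max\bigl(\{1\le m\le k\}\cup\{m>k:L_m<N_i\}\bigr),\qquad N_{i+1}=N_i+m_i,
\]
and let $p_i\in\mathbb{R}[y]$ be the polynomial of degree less than $k$ interpolating $f$ at $N_i,\dots,N_i+k-1$, so that $p_i=Y_{N_i}$ on the block $[N_i,N_{i+1})$. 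This is the step that reconciles the two competing requirements: short blocks of length at most $k$ are always available and impose no positional constraint, while every block of length $m_i>k$ is, by the very definition of $m_i$, started at $N_i>L_{m_i}$, which is exactly the range in which the lemma guarantees closeness.

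It then remains to verify the two conclusions. Since each $m_i\ge 1$, the sequence $(N_i)$ is strictly increasing, so $N_i\to\infty$; and since every $L_m$ is finite with $(L_m)_m$ nondecreasing, for each fixed $M$ we have $L_M<N_i$ for all large $i$, forcing $m_i\ge M$, whence $N_{i+1}-N_i=m_i\to\infty$. For the approximation, take any $n\in\N$ and write $n=N_i+j$ with $0\le j\le m_i-1$: if $m_i\le k$ then $j\le k-1$ and $p_i(n)=Y_{N_i}(n)=f(n)$, giving $\|f(n)-g_N(n)\|=0$; if $m_i>k$ then $N_i>L_{m_i}$ and the displayed inequality yields $\|f(n)-g_N(n)\|=\|f(N_i+j)-Y_{N_i}(N_i+j)\|\le 1/N$. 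Hence $g_N$, the concatenation of $\{p_i\}$ with respect to $\{N_i\}$, satisfies (\ref{0814formula5}). I expect the only genuinely delicate point to be the bookkeeping inside the greedy construction—ensuring that block lengths grow without ever placing a long block before its threshold—and in particular the treatment of small $n$ near the origin, which is handled by the observation that blocks of length at most $k$ reproduce $f$ exactly and so carry no positional restriction.
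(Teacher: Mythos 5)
Your strategy is exactly the paper's own: apply Lemma~\ref{find Y(n)} with $g\equiv 0$, identify $Y_n$ with the Lagrange interpolating polynomial of degree less than $k$ at the nodes $n,\dots,n+k-1$ (this is equation~(\ref{0814formula3}) in the paper, via Proposition~\ref{expression with k-th difference zero}), and then tile $\N$ by consecutive blocks of growing length, each beginning past the threshold attached to its length. The paper does the tiling dyadically (it picks $L_m$ with $2^m\mid L_m$ and $L_{m+1}>L_m$, cuts $[L_m,L_{m+1})$ into blocks of length $2^m$, and defines $p_i$ by~(\ref{definition of p})); you do it greedily, which is the same idea in different bookkeeping. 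The one genuine flaw is that your greedy step need not be well defined: $m_i=\max\bigl(\{1\le m\le k\}\cup\{m>k:L_m<N_i\}\bigr)$ presumes the second set is finite, but Lemma~\ref{find Y(n)} only supplies \emph{some} finite thresholds, and nothing you have arranged prevents the nondecreasing sequence $(L_m)_m$ from being bounded --- for instance if $f$ is itself a polynomial of degree less than $k$ one may legitimately take every $L_m=0$, and then $\{m>k:L_m<N_i\}=\{k+1,k+2,\dots\}$ has no maximum. The repair is one line and mirrors what the paper does by fiat (it forces $L_{m+1}>L_m$): since any larger threshold still satisfies the conclusion of Lemma~\ref{find Y(n)}, you may additionally assume $L_m\ge m$, after which $\{m>k:L_m<N_i\}\subseteq\{m: m<N_i\}$ is finite; alternatively, cap $m_i$ at $N_i+k$. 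With that amendment, the rest of your verification --- exactness on blocks of length $k$, the $1/N$ bound on longer blocks from $N_i>L_{m_i}$, and $m_i\to\infty$ because each fixed threshold is eventually overtaken --- is correct as written.
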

\begin{proof}
By Lemma \ref{find Y(n)}, for each integer $m\geq 1$ and $N\geq 1$, there is a sufficiently large $L_m\in \mathbb{N}$ with $2^m|L_m$ such that, whenever $n\ge L_m$, we have
\begin{equation}\label{0814formula1}
\|f(n+j)-Y_{n}(n+j)\|\leq 1/N,\;j=0,1,\ldots,2^{m}-1,
\end{equation}
where $Y_{n}(n+j)$ is defined to be $f(n+j)$ when $0\le j\le k-1$ and the value
determined by the following linear equations when $k\le j\le 2^{m}-1$,
\[\triangle^k Y_{n}(n+j)=0,\;j=0,1,\ldots,2^{m}-k-1.\]
It is not hard to check that
\begin{equation}\label{0814formula3}
Y_{n}(n+j)=\sum_{l=0}^{k-1}f(n+l)\prod_{t=0,t\neq l}^{k-1}\frac{j-t}{l-t},\;j=0,1,\ldots,2^{m}-1.
\end{equation}
We may further assume that the sequence $\{L_m\}_{m=0}^{\infty}$ ($L_0=0$) chosen above satisfies $L_{m+1}>L_m$ for each $m$.
Let $d_m=(L_{m+1}-L_m)/2^m$. Then the following is a partition of $\mathbb{N}$.
$$\mathbb{N}=\bigcup_{m=0}^\infty \bigcup_{q=0}^{d_m-1} \{L_m+q2^m, L_m+q2^m+1,..., L_m+q2^m+2^m-1\}.$$
Choose the sequence $\{N_{i}\}_{i=0}^{\infty}$ with $N_{0}<N_{1}<N_{2}<\cdot\cdot\cdot$ such that
\[\{N_{0},N_{1},\cdot\cdot\cdot\}=\{L_{m}+q2^{m}: m\in \mathbb{N},\;0\leq q\leq d_{m}-1\}.\]
Define
\begin{equation}\label{definition of p}
p_{i}(n)=\sum_{l=0}^{k-1}f(N_{i}+l)\prod_{t=0,t\neq l}^{k-1}\frac{n-N_{i}-t}{l-t}.
\end{equation}
It is a polynomial of degree less than $k$. Let
\[g_{N}(n)=p_{i}(n),\;\text{when}\;N_{i}\leq n\leq N_{i+1}-1.\]
By formula (\ref{0814formula1}) and equation (\ref{0814formula3}), we obtain formula (\ref{0814formula5}).
\end{proof}

\begin{lemma}\label{subsequence is equivalent all natural number}
Let $\{N_{i}\}_{i=0}^{\infty}$ be an increasing sequence of natural numbers with $N_{0}=0$ and $\lim_{i\rightarrow \infty}(N_{i+1}-N_{i})= \infty$. Let $\{p_{i}(y)\}_{i=0}^{\infty}$ be a sequence in $\mathbb{R}[y]$ with degrees less than $k$ for some positive integer $k$. Suppose that $f(n)$ is the concatenation of $\{p_{i}(n)\}_{i=0}^{\infty}$ with respect to $\{N_{i}\}_{i=0}^{\infty}$. Let $q$ be a positive integer and $0\leq a \leq q-1$.
Then
\begin{equation}\label{0817formula11}
\lim_{N\rightarrow \infty}\frac{1}{N}\sum_{\substack{1\leq n\leq N\\n\equiv a(mod~q)}}\mu(n)e(f(n))=0
\end{equation}
if and only if
\begin{equation}\label{0817formula12}
\lim_{m\rightarrow \infty}\frac{1}{N_{m}}\sum_{i=0}^{m-1}\sum_{\substack{N_{i}\leq n\leq N_{i+1}-1\\n\equiv a(mod~q)}}\mu(n)e(p_{i}(n))=0.
\end{equation}
\end{lemma}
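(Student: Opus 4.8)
The plan is to read both displayed conditions as statements about the Cesàro averages of the single bounded sequence $b_n := \mu(n)\,e(f(n))\,1_{n\equiv a\,(\mathrm{mod}\,q)}$, and to show that its average along all $N$ and its average along the subsequence $\{N_m\}$ have the same limiting behaviour. Write $S(N)=\sum_{1\le n\le N} b_n$. Since $f(n)=p_i(n)$ for $N_i\le n<N_{i+1}$, evaluating at $N=N_m$ telescopes the double sum in (\ref{0817formula12}): up to the two boundary terms $n=0$ and $n=N_m$ (each of modulus at most $1$, hence negligible after division by $N_m\to\infty$), condition (\ref{0817formula12}) is exactly $\lim_{m\to\infty} S(N_m)/N_m=0$. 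With this reformulation the forward implication (\ref{0817formula11})$\Rightarrow$(\ref{0817formula12}) is immediate, since a subsequence of a null sequence is null.

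The content lies in the reverse implication. First I would localize: every $N$ lies in a unique block $N_m\le N<N_{m+1}$, and because the blocks are finite and partition $\N$, the index $m=m(N)\to\infty$ as $N\to\infty$. Splitting at $N_m$,
\[
S(N)=S(N_m)+T_m(N),\qquad T_m(N)=\sum_{\substack{N_m\le n\le N\\ n\equiv a\,(\mathrm{mod}\,q)}}\mu(n)\,e(p_m(n)).
\]
The first term is harmless, since $|S(N_m)/N|\le |S(N_m)/N_m|\to 0$ by hypothesis. So everything reduces to showing $T_m(N)/N\to0$ uniformly in $N\in[N_m,N_{m+1})$ as $m\to\infty$; note that the trivial bound $|T_m(N)|\le (N-N_m)/q$ is useless precisely when the block is long, so genuine cancellation of $\mu$ against the polynomial phase is unavoidable here.

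To extract that cancellation I would remove the congruence condition by additive characters, $1_{n\equiv a\,(\mathrm{mod}\,q)}=\tfrac1q\sum_{t=0}^{q-1}e\!\big(t(n-a)/q\big)$, so that
\[
T_m(N)=\frac1q\sum_{t=0}^{q-1}e(-ta/q)\sum_{N_m\le n\le N}\mu(n)\,e\!\Big(p_m(n)+\tfrac{t}{q}n\Big),
\]
where each inner phase is again a polynomial of degree $<k$. Writing each inner sum as a difference $\sum_{n\le N}-\sum_{n<N_m}$ of initial segments, I would invoke the classical circle-method estimates of Davenport, Hua and Vinogradov in their form that is \emph{uniform over the coefficients} of polynomials of bounded degree: for every $A>0$,
\[
\sup_{\deg p<k}\Big|\sum_{n\le M}\mu(n)\,e(p(n))\Big|\ll_{k,A}\frac{M}{(\log M)^{A}}.
\]
Since $N_m\le N$, this yields $|T_m(N)|\ll_{k,A} N/(\log N_m)^{A}$, whence $|T_m(N)|/N\to0$ uniformly in $N\in[N_m,N_{m+1})$ as $m\to\infty$. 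Combining the two contributions gives $S(N)/N\to0$, which is (\ref{0817formula11}).

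The main obstacle is exactly the uniformity of this last estimate. For a single fixed polynomial the bound is the standard disjointness of $\mu$ from a polynomial phase; but here the relevant polynomial $p_{m(N)}$ varies with $N$ and its coefficients are not controlled, so a pointwise (non-uniform) disjointness statement does not suffice. What is needed is that the cancellation is uniform across all polynomials of degree $<k$, equivalently the orthogonality of $\mu$ to polynomial phases with an error independent of the coefficients. Everything else — the telescoping at $N=N_m$, the two boundary terms, and the character decomposition handling the residue class $a\ (\mathrm{mod}\ q)$ — is routine bookkeeping.
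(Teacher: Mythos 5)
Your proof is correct and follows essentially the same route as the paper's: the forward direction is treated as trivial, and the reverse direction splits off the last incomplete block at $N_{m(N)}$ and controls it with the classical Davenport--Hua estimate, whose key feature is precisely the uniformity over all polynomials of degree below a fixed bound (the paper cites this as $\sup_{\deg p<k}\big|\sum_{1\le n\le N,\, n\equiv a\ (\mathrm{mod}\ q)}\mu(n)e(p(n))\big|\ll N/\log N$). The only cosmetic difference is that the paper invokes this estimate directly for sums restricted to the progression $n\equiv a\ (\mathrm{mod}\ q)$, whereas you first remove the congruence by additive characters --- which is fine, except that the twisted phase $p_m(n)+tn/q$ has degree $\max(\deg p_m,1)$, which can exceed $k-1$ when $k=1$, so you should state the uniform bound for degree $\le\max(k-1,1)$ rather than degree $<k$ (a harmless adjustment).
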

\begin{proof}
It is obvious that $(\ref{0817formula11})\Rightarrow (\ref{0817formula12})$. We now show $(\ref{0817formula12}) \Rightarrow(\ref{0817formula11})$. In this process, we need to use a classical result (see \cite{Dav} for $k=2$ and \cite[Chapter 6, Theorem 10]{Hua} for $k>2$) stated as follows,
\[\sup_{\substack{p(y)\in \mathbb{R}[y] \\deg(p(y))<k}}\Bigg|\sum_{\substack {1\leq n\leq N\\n\equiv a(mod~q)}}\mu(n)e(p(n))\Bigg|\ll N/\log N,\]
where the implied constant at most depends on $k$ and $q$.
By the above inequality and equation (\ref{0817formula12}), for any given $\epsilon>0$, there is a positive integer $M$ such that whenever $m\geq M$ and $N\geq N_{M}$, we have
\begin{equation}\label{0817formula115}
\sum_{i=0}^{m-1}\sum_{\substack{N_{i}\leq n\leq N_{i+1}-1\\n\equiv a(mod~q)}}\mu(n)e(p_{i}(n))<(\epsilon/2)N_{m},
\end{equation}
and
\begin{equation}\label{0817formula116}
\sup_{i\in \mathbb{N}}\Bigg|\sum_{\substack {1\leq n\leq N\\n\equiv a(mod~q)}}\mu(n)e(p_{i}(n))\Bigg|<(\epsilon/4)N.
\end{equation}
Let $N\geq N_{M}$. Choose an appropriate $l\geq M$ with $N_{l}\leq N\leq N_{l+1}-1$. Then
\begin{align*}
  \Bigg|\sum_{\substack{n=1\\n\equiv a(mod~q)}}^{N}\mu(n)e(f(n))\Bigg|\leq & \Bigg|\sum_{i=0}^{l-1}\sum_{\substack{N_{i}\leq n< N_{i+1}\\n\equiv a(mod~q)}}\mu(n)e(p_{i}(n))\Bigg|+\Bigg|\sum_{\substack{N_{l}\leq n\leq N\\n\equiv a(mod~q)}}\mu(n)e(p_{l}(n))\Bigg| \\
  <&(\epsilon/2)N_{l}+(\epsilon/2)N<\epsilon N.
\end{align*}
Hence we obtain (\ref{0817formula11}).
\end{proof}
We now prove Proposition \ref{sufficient}, which gives a sufficient condition of disjointness between the M\"{o}bius function and exponential functions of arithmetic functions with the $k$-th differences tending to $0$.
\begin{proof}[Proof of Proposition \ref{sufficient}]
By Lemma \ref{approximated by polynomials}, there is a sequence $\{g_{M}(n)\}_{M=1}^{\infty}$ such that \[\lim_{M\rightarrow \infty}\sup_{n\in \mathbb{N}}|e(g_{M}(n))-e(f(n))|=0,\] where $g_{M}(n)$ is a certain concatenation of polynomials. Under the assumption of equation (\ref{conjecture about uniform fourier coefficient}), by Lemmas \ref{disjointness and distribution in short interval} and \ref{subsequence is equivalent all natural number}, we obtain
\[\lim_{N\rightarrow\infty}\frac{1}{N}\sum_{n=1}^{N}\mu(n)e(g_{M}(n))=0.\]
So
\[\lim_{N\rightarrow\infty}\frac{1}{N}\sum_{n=1}^{N}\mu(n)e(f(n))=0\]
completing the proof.
\end{proof}

Next, we shall prove Propositions \ref{k=0}, \ref{k=1 with finite accumulation points}. It is easy to see that Proposition \ref{k=0} directly follows from Proposition  \ref{k=1 with finite accumulation points}, so in the following, we just give the proof of Proposition \ref{k=1 with finite accumulation points}. Before proving it, we need some preparations. The following Dirichlet's approximation theorem is classical and well-known (see e.g., \cite[Section 8.2]{ECT}), which is proved via the fact that if there are $m+1$ points contained in $m$ regions, then there must be at least two points lie in the same region.
\begin{lemma}\label{Dirichlet theorem}
Given $L$ real numbers $\theta_{1},\ldots, \theta_{L}$ and a positive integer $q$, then we can find an integer $t\in [1,q^{L}]$, and integers $a_{1},\ldots,a_{L}$ such that $|t\theta_{j}-a_{j}|\leq 1/q$, $j=1,2,\ldots,L$.
\end{lemma}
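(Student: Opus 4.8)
The plan is to prove this exactly along the lines foreshadowed by the sentence preceding the statement, namely via a pigeonhole argument applied to fractional parts. First I would introduce, for each integer $t$ with $0\le t\le q^{L}$, the point
\[
P_{t}=\big(\{t\theta_{1}\},\{t\theta_{2}\},\ldots,\{t\theta_{L}\}\big)\in[0,1)^{L}.
\]
This produces $q^{L}+1$ points lying in the unit cube $[0,1)^{L}$. The choice of the range $0\le t\le q^{L}$ is dictated precisely by the count we will need below.

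Next I would partition the cube $[0,1)^{L}$ into $q^{L}$ congruent half-open subcubes of side length $1/q$: for each multi-index $(k_{1},\ldots,k_{L})\in\{0,1,\ldots,q-1\}^{L}$, set
\[
Q_{k_{1},\ldots,k_{L}}=\prod_{j=1}^{L}\Big[\tfrac{k_{j}}{q},\tfrac{k_{j}+1}{q}\Big).
\]
These $q^{L}$ subcubes are disjoint and cover $[0,1)^{L}$. Since there are $q^{L}+1$ points $P_{0},\ldots,P_{q^{L}}$ but only $q^{L}$ subcubes, the pigeonhole principle (the ``$m+1$ points in $m$ regions'' fact quoted in the text) yields two distinct indices, say $t_{1}<t_{2}$ in $\{0,1,\ldots,q^{L}\}$, with $P_{t_{1}}$ and $P_{t_{2}}$ lying in the same subcube. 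Consequently, for every $j$ the fractional parts $\{t_{1}\theta_{j}\}$ and $\{t_{2}\theta_{j}\}$ lie in a common interval of length $1/q$, so
\[
\big|\{t_{2}\theta_{j}\}-\{t_{1}\theta_{j}\}\big|<\tfrac{1}{q},\qquad j=1,\ldots,L.
\]

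Finally I would set $t=t_{2}-t_{1}$, so that $1\le t\le q^{L}$ because $t_{1}\ne t_{2}$ and both lie in $[0,q^{L}]$, and set $a_{j}=\lfloor t_{2}\theta_{j}\rfloor-\lfloor t_{1}\theta_{j}\rfloor\in\mathbb{Z}$. Writing each $t_{i}\theta_{j}=\lfloor t_{i}\theta_{j}\rfloor+\{t_{i}\theta_{j}\}$ and subtracting gives the telescoping identity $t\theta_{j}-a_{j}=\{t_{2}\theta_{j}\}-\{t_{1}\theta_{j}\}$, whence $|t\theta_{j}-a_{j}|<1/q\le 1/q$ for all $j$, which is the desired conclusion. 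As for difficulties, there is essentially no substantive obstacle here: the result is classical and the argument is a direct pigeonhole count. The only points requiring care are the off-by-one bookkeeping (ensuring one uses exactly $q^{L}+1$ points against $q^{L}$ boxes) and confirming $t\ge 1$, which is automatic once $t_{1}\ne t_{2}$ is noted.
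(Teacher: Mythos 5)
Your proof is correct and is precisely the classical pigeonhole argument that the paper itself invokes (the paper gives no proof, citing Titchmarsh and noting only the ``$m+1$ points in $m$ regions'' principle): your $q^{L}+1$ points $P_{t}$ against the $q^{L}$ half-open subcubes of side $1/q$, followed by taking $t=t_{2}-t_{1}$ and $a_{j}=\lfloor t_{2}\theta_{j}\rfloor-\lfloor t_{1}\theta_{j}\rfloor$, fills in exactly the intended argument. The bookkeeping is sound, and you in fact obtain the strict inequality $|t\theta_{j}-a_{j}|<1/q$, which is slightly stronger than the stated bound.
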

The following ``asymptotical periodicity'' of the concatenation of certain linear phases will be used in the proof of Proposition \ref{k=1 with finite accumulation points}.
\begin{lemma}\label{approximation}
Let $\{N_{i}\}_{i=0}^{\infty}$ be an increasing sequence of integers with $N_{0}=0$ and $\lim_{i\rightarrow\infty}(N_{i+1}-N_{i})=\infty$. Suppose that $\alpha_{0},\alpha_{1},\ldots$ are real numbers such that the sequence $\{e(\alpha_{i})\}_{i=0}^{\infty}$ has finitely many limit points. Assume that $f(n)=e(n\alpha_{i})e(\beta_{i})$ when $N_{i}\leq n< N_{i+1}$ for $i=0,1,2,\ldots$, where $\beta_{0},\beta_{1},\ldots$ are real numbers. Then there is a $\delta$ with $0<\delta<1$ and a sequence $\{n_{j}\}_{j=0}^{\infty}$ of positive integers with $\lim_{j\rightarrow \infty}n_{j}=\infty$ such that
\[\lim_{j\rightarrow \infty}\sum_{l=1}^{n_{j}^{\delta}}\limsup_{N\rightarrow \infty}\frac{1}{N}\sum_{n=0}^{N-1}|f(n+ln_{j})-f(n)|^2=0.\]
\end{lemma}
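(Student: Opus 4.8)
The plan is to exploit that $f$ is a concatenation of pure linear phases on blocks $[N_i,N_{i+1})$ whose lengths tend to infinity, so that shifting $f$ by an approximate common period of the slopes $\alpha_i$ barely changes it on each block. The decisive elementary observation is that if $n$ and $n+m$ lie in the same block $[N_i,N_{i+1})$, then $f(n)=e(n\alpha_i)e(\beta_i)$ and $f(n+m)=e((n+m)\alpha_i)e(\beta_i)$, so the offset $\beta_i$ cancels and
\[|f(n+m)-f(n)|=|e(m\alpha_i)-1|\le 2\pi\|m\alpha_i\|.\]
Thus the whole problem reduces to choosing shifts $m=ln_j$ for which $\|ln_j\alpha_i\|$ is small for all but finitely many $i$. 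Writing the finitely many limit points of $\{e(\alpha_i)\}$ as $e(\gamma_1),\dots,e(\gamma_r)$, a compactness argument on the circle shows that for every $\varepsilon>0$ the set of $i$ with $\min_s\|\alpha_i-\gamma_s\|\ge\varepsilon$ is finite; hence there is an index $I$ beyond which every $\alpha_i$ satisfies $\|\alpha_i-\gamma_{s(i)}\|<\varepsilon$ for some nearest $s(i)\in\{1,\dots,r\}$. So it suffices to make $n_j$ an approximate simultaneous period of $\gamma_1,\dots,\gamma_r$, which is exactly what Lemma \ref{Dirichlet theorem} supplies.

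First I would fix $j$ and a candidate shift $n_j$, and bound the inner $\limsup$ for each fixed $l$ by splitting the range of $n$ into three parts: (a) those $n$ for which $n$ and $n+ln_j$ lie in a common block $[N_i,N_{i+1})$ with $i\ge I_j$ (a ``good'' block); (b) those lying in a common block with $i<I_j$; and (c) those $n$ for which $n$ and $n+ln_j$ straddle a block boundary. Part (b) is confined to the fixed finite range $[0,N_{I_j})$, so its normalized contribution vanishes as $N\to\infty$. For part (c), each boundary $N_i$ spoils at most $ln_j$ values of $n$, and since $N_{i+1}-N_i\to\infty$ one has $\tfrac1N|\{i:N_i\le N\}|\to0$; hence for fixed $l,n_j$ the count of straddling $n$ is $o(N)$ and part (c) contributes $0$ to the $\limsup$. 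This leaves part (a), where each summand equals $|e(ln_j\alpha_i)-1|^2$ and the normalized count of admissible $n$ is at most $1$, giving
\[\limsup_{N\to\infty}\frac1N\sum_{n=0}^{N-1}|f(n+ln_j)-f(n)|^2\le \sup_{i\ge I_j}|e(ln_j\alpha_i)-1|^2.\]

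Next I would construct $n_j$ from Lemma \ref{Dirichlet theorem}: for a parameter $Q_j\to\infty$ it yields an integer $n_j\in[1,Q_j^{\,r}]$ with $\rho_j:=\max_s\|n_j\gamma_s\|\le 1/Q_j$, so $\rho_j\le n_j^{-1/r}$. Choosing $I_j$ large enough that $n_j\|\alpha_i-\gamma_{s(i)}\|\le\rho_j$ for all $i\ge I_j$ gives, for such $i$ and all $l\le n_j^{\delta}$,
\[\|ln_j\alpha_i\|\le \|ln_j\gamma_{s(i)}\|+ln_j\|\alpha_i-\gamma_{s(i)}\|\le l\rho_j+l\rho_j=2l\rho_j,\]
hence $|e(ln_j\alpha_i)-1|\le 4\pi l\rho_j$. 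Inserting this into the displayed bound, summing over $1\le l\le n_j^{\delta}$ and using $\sum_{l\le M}l^2\le M^3$, I obtain
\[\sum_{l=1}^{n_j^{\delta}}\limsup_{N\to\infty}\frac1N\sum_{n=0}^{N-1}|f(n+ln_j)-f(n)|^2\ \le\ 16\pi^2\rho_j^2\,n_j^{3\delta}\ \le\ 16\pi^2\,n_j^{\,3\delta-2/r}.\]
Taking $\delta=\tfrac1{2r}\in(0,1)$ makes the exponent $3\delta-2/r=-\tfrac1{2r}<0$, so the right-hand side tends to $0$ as $n_j\to\infty$, which is the claim.

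The hard part is the interplay between the quality $\rho_j$ of the simultaneous approximation and the number $n_j^{\delta}$ of shifts $l$ that must be controlled at once: Dirichlet only guarantees $\rho_j\lesssim n_j^{-1/r}$, and because $\|ln_j\gamma_s\|$ can grow linearly in $l$, the summed error behaves like $\rho_j^2 n_j^{3\delta}$, forcing the constraint $\delta<2/(3r)$ and explaining why a small but fixed $\delta$ (rather than an arbitrary $\delta<1$) is chosen. A secondary point needing care is ensuring $n_j\to\infty$: if some $\gamma_s$ is irrational then $\|n_j\gamma_s\|\to0$ already forces $n_j\to\infty$ (after passing to a subsequence of the $Q_j$), whereas if every $\gamma_s$ is rational one instead takes $n_j$ to run through an increasing sequence of common denominators, so that $\rho_j=0$, the bound reads $16\pi^2(n_j\varepsilon_j)^2n_j^{3\delta}$ with $\varepsilon_j$ made as small as desired via $I_j$, and any $\delta\in(0,1)$ works.
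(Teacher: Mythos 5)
Your proposal is correct and takes essentially the same route as the paper's own proof: the same reduction to within-block differences (where the $\beta_i$ cancel), the same negligibility of boundary-straddling and initial-segment terms, the same use of Dirichlet's theorem (Lemma \ref{Dirichlet theorem}) applied to the finitely many limit points to produce the shifts $n_j$, and the same choice $\delta=\tfrac{1}{2r}$, your final bound $\rho_j^2 n_j^{3\delta}\le n_j^{-1/(2r)}$ being the paper's bound $\ll q_j^{-1/2}$ rewritten with $n_j$ instead of $q_j$ as the parameter. The only (minor, and welcome) difference is that you spell out the degenerate cases --- guaranteeing $n_j\to\infty$ and handling the situation where all limit points are rational so that $\rho_j=0$ --- which the paper dispatches with ``it is not hard to check.''
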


\begin{proof}
Suppose that the limit points of $\{e(\alpha_{i})\}_{i=0}^{\infty}$ are $e(\theta_{1}),\ldots,e(\theta_{L})$ for some integer $L\geq 1$. Let $q_{j}=(j+6)^2\pi^2$, $j\geq 0$. By Lemma \ref{Dirichlet theorem}, we can find an integer $n_{j}$ with $1\leq n_{j}\leq q_{j}^{L}$ such that $|\theta_{s}-\frac{a_{s,j}}{n_{j}}|\leq\frac{1}{n_{j}q_{j}}$, where $a_{s,j}$ is some integer for $s=1,\ldots,L$. It is not hard to check that the sequence $\{n_{j}\}_{j=0}^{\infty}$ can be chosen to satisfy $\lim_{j\rightarrow \infty}n_{j}=\infty$. Moreover, there is an $i_{0}$ such that when $i\geq i_{0}$ we can choose an $s\in \{1,2,\ldots,L\}$ satisfying $\|\alpha_{i}-\theta_{s}\|<\frac{1}{n_{j}q_{j}}$. Then for $i\geq i_{0}$, $\|n_{j}\alpha_{i}\|<\frac{2}{q_{j}}$ and $|e(n_{j}\alpha_{i})-1|=2|\sin(\pi n_{j}\alpha_{i})|\ll \frac{1}{q_{j}}$. So, for any given $n_{j}$, \begin{eqnarray*}
  &&\sum_{l=1}^{n_{j}^{\frac{1}{2L}}}\limsup_{N\rightarrow \infty}\frac{1}{N}\Bigg(\sum_{i=0}^{m-1}\sum_{n=N_{i}}^{N_{i+1}-1}|f(n+ln_{j})-f(n)|^2+\sum_{n=N_{m}}^{N}|f(n+ln_{j})-f(n)|^2\Bigg)\\
&&=\sum_{l=1}^{n_{j}^{\frac{1}{2L}}}
\limsup_{N\rightarrow \infty}\frac{1}{N}\Bigg(\sum_{i=0}^{m-1}\sum_{n=N_{i}}^{N_{i+1}-ln_{j}}|f(n+ln_{j})-f(n)|^2+\sum_{n=N_{m}}^{N-ln_{j}}|f(n+ln_{j})-f(n)|^2\Bigg) \\
 && =\sum_{l=1}^{n_{j}^{\frac{1}{2L}}}\limsup_{N\rightarrow \infty}\frac{1}{N}\Bigg(\sum_{i=0}^{m-1}\sum_{n=N_{i}}^{N_{i+1}-ln_{j}}|e(ln_{j}\alpha_{i})-1|^2+\sum_{n=N_{m}}^{N-ln_{j}}|e(ln_{j}\alpha_{m})-1|^2\Bigg)\\
 &&=\sum_{l=1}^{n_{j}^{\frac{1}{2L}}}\limsup_{m\rightarrow \infty}\frac{1}{N}\Bigg(\sum_{i=i_{0}}^{m-1}\sum_{n=N_{i}}^{N_{i+1}-ln_{j}}|e(ln_{j}\alpha_{i})-1|^2+\sum_{n=N_{m}}^{N-ln_{j}}|e(ln_{j}\alpha_{m})-1|^2\Bigg)\\
&&\ll \sum_{l=1}^{n_{j}^{\frac{1}{2L}}}\frac{l^2}{q_{j}^2}\ll \frac{1}{q_{j}^{\frac{1}{2}}}\rightarrow 0,\;j\rightarrow \infty.
\end{eqnarray*}
The claimed result follows by choosing $\delta=\frac{1}{2L}$.
\end{proof}
To prove Proposition \ref{k=1 with finite accumulation points}, we also need the following result of the second author \cite[Lemma 4.1]{W} on the self-correlation of $\mu(n)e(P(n))$ in short arithmetic progressions.
\begin{lemma}\label{the distribution of mobius in short arithmetic interval}
Let $s\geq 1$ and $h\geq 3$ be integers. Suppose that $P(x)\in \mathbb{R}[x]$ is of degree $d\geq 0$.
\begin{equation}\label{has relation with k}
\limsup_{N\rightarrow \infty}\frac{1}{N}\sum_{n=1}^{N}\Bigg|\frac{1}{h}\sum_{l=1}^{h}\mu(n+ls)e\big(P(n+ls)\big)\Bigg|^2\ll \frac{s}{\varphi(s)}\frac{\log\log h}{\log h},
\end{equation}
where $\varphi$ is the Euler totient function and the implied constant depends on $d$ at most.
\end{lemma}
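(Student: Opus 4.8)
The plan is to prove the bound by opening the square, isolating the diagonal, and extracting cancellation from the off-diagonal shifted correlations of $\mu$ through a sieve decomposition rather than through any unproven (Chowla-type) correlation input. For fixed $n$ I would expand
\[
\Bigg|\frac{1}{h}\sum_{l=1}^{h}\mu(n+ls)e\big(P(n+ls)\big)\Bigg|^2
=\frac{1}{h^2}\sum_{l_1,l_2=1}^{h}\mu(n+l_1 s)\mu(n+l_2 s)\,e\big(P(n+l_1 s)-P(n+l_2 s)\big),
\]
and then average over $n\le N$. The diagonal $l_1=l_2$ contributes $\frac{1}{h^2}\sum_{l}\frac{1}{N}\sum_{n\le N}\mu(n+ls)^2$, which is $\ll 1/h$ since $\mu^2$ has bounded mean; because $1/h\ll \frac{\log\log h}{\log h}$ for $h\ge 3$, this term is harmless, so the entire content sits in the off-diagonal terms $l_1\ne l_2$.

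For $l_1\ne l_2$ I would set $m=n+l_1 s$ and $k=(l_2-l_1)s\ne 0$, and observe that the phase difference $P(n+l_1 s)-P(n+l_2 s)$ is, as a polynomial in $n$, of degree at most $d-1$ (the leading coefficients cancel); call it $Q_k$. Thus the task reduces to bounding $\frac{1}{N}\sum_{m}\mu(m)\mu(m+k)\,e(Q_k(m))$ uniformly in $Q_k$, with an output that is summable against $\frac{1}{h^2}\sum_{0<|k/s|\le h}$ so as to produce $\frac{\log\log h}{\log h}$. The main difficulty, and the reason a naive estimate fails, is that these are genuine shifted correlations of $\mu$, for which elementary cancellation is unavailable.

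The route I would take to sidestep Chowla-type input is a Ramar\'e-type sieve identity combined with the multiplicative relation $\mu(pm')=-\mu(m')$ for $p\nmid m'$. Fixing a sieve level $z=z(h)$, I would split each $m$ according to whether it has a prime factor $p\le z$: for the generic $m$ that does, factoring out $p$ converts the shifted correlation into a \emph{dilated} correlation controllable by the large sieve (or Cauchy--Schwarz together with orthogonality), while the exceptional integers with no prime factor $\le z$ form a set of density $\ll 1/\log z$ by Mertens' theorem. Choosing $z$ as a suitable power of $h$ yields the $1/\log h$ saving, the $\log\log h$ factor arising from the average number of admissible small prime factors one must sum over, and the restriction of the support of $\mu(n+ls)$ to residues coprime to $s$ supplies the density correction $s/\varphi(s)$; the phase $e(Q_k(m))$ of bounded degree is dealt with by a Weyl/van der Corput or large-sieve estimate, which is where uniformity in $P$ enters.

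The hard part will be executing the dilation step uniformly in the polynomial phase: once a prime $p$ is factored out, the phase $Q_k$ is pulled back along $m\mapsto m/p$ and one must bound the resulting exponential sum in the dilated variable without losing the saving, simultaneously over all $p\le z$, all shifts $k$ with $0<|k|\le hs$, and all polynomials of degree $\le d-1$. Synchronizing this exponential-sum input with the combinatorial sieve bookkeeping, so that the final implied constant depends on $d$ alone and the $s$-dependence is exactly $s/\varphi(s)$, is the delicate quantitative core of the argument.
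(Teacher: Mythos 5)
The paper itself contains no proof of this lemma: it is imported wholesale from the second author's preprint \cite[Lemma 4.1]{W}, whose argument runs through the circle method and the Matom\"{a}ki--Radziwi{\l}{\l}--Tao machinery for averaged Chowla-type correlations \cite{KMT17,KM}, so the comparison is really with that external proof. Your opening reductions are correct and standard: the diagonal contributes $O(1/h)$, and the off-diagonal phase differences $P(n+l_1s)-P(n+l_2s)$ have degree at most $d-1$ in $n$, so the entire content of the lemma sits in the averaged shifted correlations $\frac{1}{N}\sum_{m}\mu(m)\mu(m+k)e(Q_k(m))$ with $0<|k|\le hs$.

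The genuine gap is your treatment of those correlations. The pivotal claim --- that for $m=pm'$ with a small prime factor $p\le z$, ``factoring out $p$ converts the shifted correlation into a dilated correlation controllable by the large sieve'' --- fails: multiplicativity gives $\mu(pm')\mu(pm'+k)e\big(Q_k(pm')\big)=-\mu(m')\mu(pm'+k)e\big(Q_k(pm')\big)$ for $p\nmid m'$, and since $p\nmid pm'+k$ unless $p\mid k$, the shifted factor does \emph{not} come along with the dilation $m\mapsto m'$; one is left with a correlation of $\mu$ along the sequences $m'$ and $pm'+k$, which has no bilinear or dilated structure to which the large sieve, Cauchy--Schwarz, or Weyl differencing applies (the dilation genuinely helps only on the sparse set of shifts with $p\mid k$). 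Extracting cancellation from $\sum_m \mu(m)\mu(m+k)$ even on average over $k$ is precisely the averaged Chowla theorem of \cite{KMT17}, here further twisted by polynomial phases; all known proofs go through a circle-method reduction to short-interval exponential sums of $\mu$ and then the Matom\"{a}ki--Radziwi{\l}{\l} theorem, with the quantitative factor $\frac{s}{\varphi(s)}\frac{\log\log h}{\log h}$ coming out of Hal\'asz-type estimates on the non-pretentiousness of $\mu$ (cf.\ Proposition \ref{Halasz bound}), not from Mertens' bound on the density of $z$-rough integers nor from counting small prime factors, so your accounting of where $\log\log h$ and $s/\varphi(s)$ arise is also incorrect. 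Moreover, insofar as your plan asks for a nontrivial bound on each $k$-correlation ``uniformly in $Q_k$'' before summing, that is the open (twisted) Chowla conjecture for an individual shift. You yourself flag the ``dilation step'' as the unresolved hard part; that step is not a technical refinement but the theorem itself, so what you have is a correct reduction followed by a program that cannot be completed with the stated tools.
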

\begin{proof}[Proof of Proposition \ref{k=1 with finite accumulation points}]
Let $f_{0}(n)=\frac{c}{2}n^2+c_{1}n+c_{0}$ for $n\in \mathbb{N}$. Then $\triangle^2 f_{0}(n)=c$ and $\lim_{n\rightarrow \infty}\|\triangle^2(f-f_{0})(n)\|=0$. For any given $\epsilon>0$, by Lemma \ref{approximated by polynomials} and equation (\ref{definition of p}), there is an increasing sequence $\{N_{i}\}_{i=0}^{\infty}$ with $N_{0}=0$ and $\lim_{i\rightarrow \infty}(N_{i+1}-N_{i})=\infty$, and a function $g_{\epsilon}(n)$ defined by $g_{\epsilon}(n)=n(f(N_{i}+1)-f(N_{i}))+(N_{i}+1)f(N_{i})-f(N_{i}+1)N_{i}$ when $N_{i}\leq n<N_{i+1}$ for $i=0,1,\ldots$, such that
\begin{equation}\label{0817afternoonformula2}
\sup_{n\in \mathbb{N}}|e(f(n))-e(f_{0}(n)+g_{\epsilon}(n))|<\epsilon.
\end{equation}
We claim that
\begin{equation}\label{formula0415afternoon}
\lim_{m\rightarrow \infty} \frac{1}{N_{m}}\sum_{i=0}^{m-1}\Bigg|\sum_{N_i\leq n <N_{i+1}}
\mu(n)e(f_{0}(n))e(g_{\epsilon}(n))\Bigg|= 0.
\end{equation}
Using Lemma \ref{subsequence is equivalent all natural number},
\[\lim_{N\rightarrow \infty}\frac{1}{N}\sum_{1\leq n\leq N}\mu(n)e(f_{0}(n)+g_{\epsilon}(n))=0.\]
So by formula (\ref{0817afternoonformula2}), we have for $N$ large enough,
\[\frac{1}{N}\Bigg|\sum_{1\leq n\leq N}\mu(n)e(f(n))\Bigg|<2\epsilon.\]
Letting $\varepsilon\rightarrow 0$, we then obtain the statement in this proposition.

We are left to prove claim (\ref{formula0415afternoon}). Write $\alpha_{i}=f(N_{i}+1)-f(N_{i})$. Choose $\{\beta_{i}\}_{i=0}^{\infty}$ as a sequence of real numbers such that
\[ \Bigg|\sum_{N_i\leq n <N_{i+1}}
\mu(n)e(f_{0}(n))e(g_{\epsilon}(n))\Bigg|=\sum_{N_i\leq n <N_{i+1}}
\mu(n)e(f_{0}(n))e(n\alpha_i)e(\beta_{i}).\]
Define $g(n)$ to be $e(n\alpha_{i})e(\beta_{i})$ when $N_{i}\leq n< N_{i+1}$, $i=0,1,\ldots$. So it suffices to prove that
\begin{equation}\label{0817formula1}
\lim_{N\rightarrow \infty}\frac{1}{N}\sum_{n=1}^{N}\mu(n)e(f_{0}(n))g(n)=0.
\end{equation}
Note that the set $\{e\big(\alpha_{i}\big):i=0,1,\ldots\}$ has finitely many limit points. By Lemma \ref{approximation}, there is a $\delta$ with $0<\delta<1$ and a sequence $\{n_{j}\}_{j=0}^{\infty}$ of positive integers with $\lim_{j\rightarrow\infty}n_{j}=\infty$ such that
\begin{equation}\label{0907}
\lim_{j\rightarrow \infty}\frac{1}{n_{j}^{\delta}}\sum_{l=1}^{n_{j}^{\delta}}\limsup_{N\rightarrow \infty}\frac{1}{N}\sum_{n=0}^{N-1}|g(n+ln_{j})-g(n)|^2=0.
\end{equation}
Note that $\frac{s}{\varphi(s)}\ll \log\log s$. By Lemma \ref{the distribution of mobius in short arithmetic interval}, for any $n_{j}$,
\begin{equation}\label{0907formula2}
\limsup_{N\rightarrow \infty}\frac{1}{N}\sum_{n=1}^{N}\Big|\frac{1}{n_{j}^{\delta}}\sum_{l=1}^{n_{j}^{\delta}}\mu(n+ln_{j})e(f_{0}(n+ln_{j}))\Big|^2\ll \big(\log\log n_{j}\big)\frac{\log\log n_{j}^{\delta}}{\log n_{j}^{\delta}}.
\end{equation}
Note that the right side of the above inequality tends to zero as $j\rightarrow \infty$. Hence for any given $\epsilon>0$, by formulas (\ref{0907}) and (\ref{0907formula2}), there are positive integers  $j_{0}$ and $M_{0}$ such that whenever $N>M_{0}$,
\[\frac{1}{n_{j_{0}}^{\delta}}\sum_{l=1}^{n_{j_{0}}^{\delta}}\frac{1}{N}\sum_{n=1}^{N}|g(n+ln_{j_{0}})-g(n)|^2<\epsilon^2/9\]
and
\[
\frac{1}{N}\sum_{n=1}^{N}\Big|\frac{1}{n_{j_{0}}^{\delta}}\sum_{l=1}^{n_{j_{0}}^{\delta}}\mu(n+ln_{j_{0}})e(f_{0}(n+ln_{j_{0}}))\Big|^2<\epsilon^2/9.
\]
Then by the Cauchy-Schwarz inequality,
\begin{equation}\label{0817formula2}
\frac{1}{n_{j_{0}}^{\delta}}\sum_{l=1}^{n_{j_{0}}^{\delta}}\frac{1}{N}\sum_{n=1}^{N}
|g(n+ln_{j_{0}})-g(n)|<\epsilon/3,
\end{equation}
and
\begin{equation}\label{0817formula3}
\frac{1}{N}\sum_{n=1}^{N}\Bigg|\frac{1}{n_{j_{0}}^{\delta}}\sum_{l=1}^{n_{j_{0}}^{\delta}}\mu(n+ln_{j_{0}})e(f_{0}(n+ln_{j_{0}}))\Bigg|<\epsilon/3.
\end{equation}
Observe that
\[\frac{1}{N}\sum_{n=1}^{N}\mu(n)e\big(f_{0}(n)\big)g(n)=\frac{1}{N}\sum_{n=1}^{N}\mu(n+ln_{j_{0}})e\big(f_{0}(n+ln_{j_{0}})\big)g(n+ln_{j_{0}})+O(\frac{ln_{j_{0}}}{N}).\]
Write $h_{j_{0}}=n_{j_{0}}^{\delta}$. Then there is a positive integer $M_{1}$ such that whenever $N>M_{1}$,
\begin{equation}\label{0817formula4}
\Bigg|\frac{1}{N}\sum_{n=1}^{N}\mu(n)e\big(f_{0}(n)\big)g(n)-\frac{1}{N}\sum_{n=1}^{N}\frac{1}{h_{j_{0}}}\sum_{l=1}^{h_{j_{0}}}\mu(n+ln_{j_{0}})
e\big(f_{0}(n+ln_{j_{0}})\big)
g(n+ln_{j_{0}})\Bigg|<\epsilon/3.
\end{equation}
Let $M_{2}=\max\{M_{0},M_{1}\}$. Then by formulas (\ref{0817formula2}), (\ref{0817formula3}) and (\ref{0817formula4}), for $N>M_{2}$, we have
\begin{align*}
\Bigg|\frac{1}{N}\sum_{n=1}^{N}\mu(n)e\big(f_{0}(n)\big)g(n)\Bigg|<&\Bigg|\frac{1}{N}\sum_{n=1}^{N}\frac{1}{h_{j_{0}}}\sum_{l=1}^{h_{j_{0}}}
\mu(n+ln_{j_{0}})
e\big(f_{0}(n+ln_{j_{0}})\big)
g(n+ln_{j_{0}})\Bigg|+\epsilon/3 \nonumber\\
\leq &\Bigg|\frac{1}{N}\sum_{n=1}^{N}\frac{1}{h_{j_{0}}}\sum_{l=1}^{h_{j_{0}}}\mu(n+ln_{j_{0}})
e\big(f_{0}(n+ln_{j_{0}})\big)
\big(g(n+ln_{j_{0}})-g(n)\big)\Bigg|\nonumber\\
&+\Bigg|\frac{1}{N}\sum_{n=1}^{N}\frac{1}{h_{j_{0}}}\sum_{l=1}^{h_{j_{0}}}\mu(n+ln_{j_{0}})
e\big(f_{0}(n+ln_{j_{0}})\big)g(n)
\Bigg|+\epsilon/3 \nonumber\\
\leq &\frac{1}{N}\sum_{n=1}^{N}\frac{1}{h_{j_{0}}}\sum_{l=1}^{h_{j_{0}}}
|g(n+ln_{j_{0}})-g(n)|\nonumber\\&+\frac{1}{N}\sum_{n=1}^{N}\Bigg|\frac{1}{h_{j_{0}}}\sum_{l=1}^{h_{j_{0}}}\mu(n+ln_{j_{0}})
e\big(f_{0}(n+ln_{j_{0}})\big)\Bigg|
+\epsilon/3\nonumber\\<&\epsilon.
\end{align*}
Then we obtain equation (\ref{0817formula1}).
\end{proof}
In the remaining part of this section, we shall prove Theorem \ref{k=1rapiddecreasingspeed}. The major ingredient of our proof is Matom\"{a}ki-Radziwi{\l}{\l}-Tao-Ter\"{a}v\"{a}inen-Ziegler's recent work \cite{MRT20} on averages of the correlation between multiplicative functions and polynomial phases in short intervals. To state this result, we shall use the following distance function of Granville and Soundararajan,
\[\mathbb{D}(g(n),n \mapsto n^{it};X):=\Big(\sum_{p\leq X}\frac{1-\text{Re}(g(p)p^{-it})}{p}\Big)^{\frac{1}{2}}\]
for any multiplicative function $g(n)$ with $|g(n)|\leq 1$ for all $n\in\mathbb{N}$. This distance function was introduced in \cite{GS07} to measure the pretentiousness between $g(n)$ and $n \mapsto n^{it}$. Throughout this section, define
\[M(g;X,Q) := \inf_{\substack{\chi~mod~q \\ 1\leq q\leq Q\\|t|\leq X}} \mathbb{D}(g(n)\chi(n),n \mapsto n^{it};X)^2
=\inf_{\substack{ \chi~mod~q \\ 1\leq q\leq Q \\|t|\leq X}} \sum_{p\leq X} \frac{1-\Re(g(p)\chi(p)p^{-it})}{p}.\]
\begin{lemma}$($\cite[Theorems 1.3 and 1.8]{MRT20}$)$\label{a lemma from fourier uniformity}
Let $k$ be a given positive integer, and let $5/8<\tau<1$ and $0<\theta<1$ be fixed. Denote by $\mathcal{D}_{k}$ the set of all polynomials in $\mathbb{R}[y]$ of degrees less than $k$. Let $f:\mathbb{N}\rightarrow \mathbb{C}$ be a multiplicative function with $|f(n)|\leq 1$ for any $n\in \mathbb{N}$. Suppose that $X\geq 1$, $X^{\theta}\geq H\geq \exp((\log X)^{\tau})$, and $\eta>0$ are such that
\[
\int_X^{2X} \sup_{p(y)\in \mathcal{D}_{k}}\left| \sum_{x\leq n< x+H} f(n)e(p(n))\right| dx \geq \eta HX.
\]
Then
\[
M(f;AX^{k}/H^{k-1/2}, Q) \ll_{k,\eta,\theta,\tau} 1
\]
for some positive numbers $A,Q\ll_{k,\eta, \theta,\tau} 1.$
\end{lemma}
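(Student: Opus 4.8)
The plan is to obtain the statement as a direct repackaging of the two cited results of Matom\"{a}ki--Radziwi{\l}{\l}--Tao--Ter\"{a}v\"{a}inen--Ziegler, taking their contrapositive and matching the parameters to the normalization used here. First I would recall the two inputs in the precise quantitative form I need. Theorem 1.3 of \cite{MRT20} is the polynomial-phase instance of their higher-order uniformity result: for a $1$-bounded multiplicative $f$ and $H$ in the admissible range $X^{\theta}\ge H\ge \exp((\log X)^{\tau})$ with $5/8<\tau<1$, the averaged supremum $\int_X^{2X}\sup_{p\in\mathcal{D}_k}\big|\sum_{x\le n<x+H}f(n)e(p(n))\big|\,dx$ exhibits cancellation \emph{unless} $f$ lies on a major arc; Theorem 1.8 of \cite{MRT20} is the quantitative form of the major-arc alternative, expressing the obstruction through the Granville--Soundararajan distance $\mathbb{D}$, and hence through the quantity $M(f;\cdot,\cdot)$ defined above.

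Second, I would run the contrapositive. The hypothesis of the lemma is exactly the negation of the cancellation conclusion, namely that the integral is $\ge \eta HX$. By Theorem 1.3 of \cite{MRT20} this forces the major-arc alternative, and by the quantitative Theorem 1.8 there exist a modulus $q\le Q$, a character $\chi \bmod q$, and a real $t$ with $|t|$ below the relevant frequency scale such that $\mathbb{D}(f\chi,\,n\mapsto n^{it};X)^2\ll_{k,\eta,\theta,\tau}1$. Taking the infimum over admissible $\chi,q,t$ in the definition of $M(f;\cdot,Q)$ then yields $M(f;AX^{k}/H^{k-1/2},Q)\ll_{k,\eta,\theta,\tau}1$ with $A,Q$ depending only on $k,\eta,\theta,\tau$, which is the desired conclusion.

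The one genuinely non-formal step is the identification of the frequency scale $AX^{k}/H^{k-1/2}$ with the range of $t$ produced by \cite{MRT20}. I would motivate this by the standard Taylor-expansion heuristic: if $f$ pretends to be $n^{it}$, then on a window $[x,x+H)$ with $x\asymp X$ the phase $\tfrac{t}{2\pi}\log n$ agrees with a polynomial of degree $<k$ up to an error governed by its $(n-x)^{k}$-term, of size $\asymp |t|H^{k}/X^{k}$; the precise cutoff at which degree-$<k$ polynomials resolve these archimedean characters in the $L^2$/large-values machinery of \cite{MRT20} carries the additional $H^{1/2}$ saving and produces the exponent $k-1/2$. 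I would then check that this matches the scale appearing in their stated theorems verbatim.

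The main obstacle I anticipate is bookkeeping rather than mathematical depth: one must verify that the admissible ranges imposed here ($X^{\theta}\ge H\ge\exp((\log X)^{\tau})$, $\tau>5/8$) coincide with the hypotheses of Theorems 1.3 and 1.8 of \cite{MRT20} — in particular that the threshold $5/8$ is exactly the one dictated by their short-interval exponent — and, if those theorems are phrased for general nilsequences, that specializing to polynomial phases $e(p(n))$ with $p\in\mathcal{D}_k$ neither shrinks the admissible frequency window nor alters the dependence of $A$ and $Q$ on the parameters. Once these identifications are confirmed, the lemma follows with no further analysis.
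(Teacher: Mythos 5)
This lemma is imported verbatim from \cite{MRT20} — the paper supplies no internal proof beyond the citation — and your plan (quote Theorems 1.3 and 1.8 of \cite{MRT20} and verify that the parameter ranges and the frequency scale $AX^{k}/H^{k-1/2}$ match) is exactly the paper's approach. One small correction to your reading of the source: the two cited theorems are not a ``cancellation dichotomy plus major-arc quantification'' pair requiring a contrapositive, but rather split by the range of $H$ (one handles the power range $H=X^{\theta}$, the other extends to the sub-polynomial range $H\geq \exp((\log X)^{\tau})$ with $\tau>5/8$ for polynomial phases), and each is already stated in the form ``correlation $\geq \eta HX$ implies $M(f;AX^{k}/H^{k-1/2},Q)\ll_{k,\eta,\theta,\tau}1$'', so the lemma follows by direct concatenation of the two statements.
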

The following is a well-known result (see, e.g., \cite[(1.12)]{KMT17}) about the ``non-pretentious'' nature of the M\"{o}bius function.
\begin{proposition}\label{Halasz bound}
Let $Q>0$. For $\epsilon>0$, we have for $X$ large enough,
\[
M(\mu;X,Q) \geq (1/3- \epsilon) \log\log X+O(1).
\]
\end{proposition}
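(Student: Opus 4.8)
The plan is to use the defining formula for $M$ together with the exact values $\mu(p)=-1$ at primes, reducing the whole statement to a lower bound for a Dirichlet $L$-function just to the right of the line $\Re(s)=1$. Since $\mu(p)=-1$ and the prime-power values $\mu(p^j)$, $j\ge 2$, vanish, for any character $\chi \bmod q$ with $q\le Q$ and any $|t|\le X$ we have
\[
\mathbb{D}\big(\mu(n)\chi(n),n\mapsto n^{it};X\big)^2=\sum_{p\le X}\frac{1+\Re\big(\chi(p)p^{-it}\big)}{p}.
\]
By Mertens' theorem $\sum_{p\le X}1/p=\log\log X+O(1)$, so it suffices to bound $\sum_{p\le X}\Re(\chi(p)p^{-it})/p$ from below, uniformly in $\chi$ and $t$. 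First I would set $\sigma=1+1/\log X$ and check, by comparing the weights $p^{-1}$ and $p^{-\sigma}$ on the range $p\le X$, by estimating the tail $\sum_{p>X}p^{-\sigma}=O(1)$, and by absorbing the prime-power terms, that
\[
\sum_{p\le X}\frac{\Re(\chi(p)p^{-it})}{p}=\Re\log L(\sigma+it,\chi)+O(1)=\log\big|L(\sigma+it,\chi)\big|+O(1).
\]
Thus $M(\mu;X,Q)=\log\log X+\inf_{q\le Q,\,\chi,\,|t|\le X}\log|L(\sigma+it,\chi)|+O(1)$, and everything comes down to a lower bound for $|L(\sigma+it,\chi)|$.

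The key input is the Vinogradov--Korobov lower bound: for fixed $Q$ and any $\epsilon>0$,
\[
\big|L(\sigma+it,\chi)\big|\gg_{Q,\epsilon}(\log X)^{-2/3-\epsilon}\qquad(\sigma\ge 1,\ |t|\le X,\ \chi\bmod q,\ q\le Q),
\]
which follows from the Vinogradov--Korobov zero-free region for $L(s,\chi)$ with bounded conductor. Granting this, $\log|L(\sigma+it,\chi)|\ge-(2/3+\epsilon)\log\log X+O(1)$ uniformly, and substituting into the displayed identity gives
\[
M(\mu;X,Q)\ge\log\log X-(2/3+\epsilon)\log\log X+O(1)=(1/3-\epsilon)\log\log X+O(1),
\]
as claimed. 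A few cases deserve a separate look: for the principal character $L(s,\chi_0)$ differs from $\zeta(s)$ only by the finitely many Euler factors at $p\mid q$, so its lower bound is that of $\zeta$ (and near $t=0$ its modulus is in fact as large as $\asymp\log X$); for non-principal $\chi$ the function is entire and the bound is uniform; and for the finitely many real $\chi \bmod q$ with $q\le Q$ the only danger is a Siegel zero near $t=0$, but since $q$ is bounded each $L(1,\chi)$ is a fixed positive constant, so $|L(\sigma+it,\chi)|\gg_Q 1$ there and the exceptional behaviour is harmlessly absorbed into the $Q$-dependent implied constants.

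The main obstacle is precisely the exponent $2/3$. The naive route through the classical inequality $3+4\cos\theta+\cos 2\theta\ge 0$, i.e.\ through $\zeta(\sigma)^3|L(\sigma+it,\chi)|^4|L(\sigma+2it,\chi^2)|\ge 1$, only controls $|L(\sigma+it,\chi)|$ in terms of $|L(\sigma+2it,\chi^2)|$, and the latter may be as large as $\asymp\log X$ for large $t$; this yields merely $|L(\sigma+it,\chi)|\gg(\log X)^{-1}$ and hence the vacuous bound $M(\mu;X,Q)\ge O(1)$. Breaking past exponent $1$ forces one to use the genuine width of the Vinogradov--Korobov zero-free region rather than the de la Vall\'ee Poussin region, and this is the single ingredient I would import as a black box. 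The remaining work---the passage to the line $\sigma=1+1/\log X$, the uniformity over the compact ranges $q\le Q$ and $|t|\le X$, and the bookkeeping of the prime-power and tail contributions---is routine.
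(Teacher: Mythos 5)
Your proposal is correct, and it is essentially the same argument as the one behind the paper's treatment of this statement: the paper does not prove Proposition \ref{Halasz bound} itself but cites \cite[(1.12)]{KMT17}, where the bound is obtained exactly as you do, by writing $\mathbb{D}(\mu\chi, n\mapsto n^{it};X)^2=\sum_{p\le X}(1+\Re(\chi(p)p^{-it}))/p$, applying Mertens' theorem, and lower-bounding $\sum_{p\le X}\Re(\chi(p)p^{-it})/p$ via $\log|L(1+1/\log X+it,\chi)|$ together with the Vinogradov--Korobov zero-free region for $L$-functions of bounded conductor, which yields the exponent $2/3+\epsilon$ and hence the constant $1/3-\epsilon$. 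Your side remarks (the $O(1)$ tail and weight comparisons, the harmlessness of principal characters and of Siegel zeros when $q\le Q$ is bounded, and the failure of the naive $3$-$4$-$1$ route) are all accurate.
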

By the above proposition and Lemma \ref{a lemma from fourier uniformity}, we have the following result which states that $\mu(n)$ does not correlate with polynomial phases in short intervals on average.
\begin{lemma}\label{the distrbution of moebius in short interval}
Let $k$ be a given positive integer, and let $5/8<\tau<1$ and $0<\theta<1$ be fixed. Suppose that $X\geq 1$ and $X^{\theta}\geq H\geq \exp((\log X)^{\tau})$.  Denote by $\mathcal{D}_{k}$ the set of all polynomials in $\mathbb{R}[y]$ of degrees less than $k$. Then for any $\eta>0$, there is an $X_{0}>0$ (at most depends on $k,\eta, \theta,\tau$) such that whenever $X>X_{0}$,
\[
\int_X^{2X} \sup_{p(y)\in \mathcal{D}_{k}}\Bigg| \sum_{x\leq n< x+H} \mu(n)e\big(p(n)\big)\Bigg| dx<\eta HX.
\]
\end{lemma}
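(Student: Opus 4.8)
The plan is to argue by contradiction, playing the upper bound on the pretentious distance that a large correlation integral would force (Lemma \ref{a lemma from fourier uniformity}) against the unconditional lower bound for $M(\mu;\cdot,\cdot)$ coming from the non-pretentiousness of the M\"obius function (Proposition \ref{Halasz bound}). Concretely, I would fix $k$, $\tau\in(5/8,1)$, $\theta\in(0,1)$, and $\eta>0$, and suppose toward a contradiction that the desired inequality fails for arbitrarily large $X$: that is, there is a sequence $X\to\infty$, each carrying an admissible $H=H(X)$ with $X^\theta\ge H\ge \exp((\log X)^\tau)$, for which
\[
\int_X^{2X}\sup_{p(y)\in\mathcal{D}_{k}}\Bigg|\sum_{x\le n<x+H}\mu(n)e(p(n))\Bigg|\,dx\ge \eta HX.
\]

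Since $\mu$ is a $1$-bounded multiplicative function and the constraints on $X,H$ are exactly those required in Lemma \ref{a lemma from fourier uniformity}, applying that lemma with $f=\mu$ produces constants $A,Q\ll_{k,\eta,\theta,\tau}1$, crucially independent of $X$, such that
\[
M(\mu;\,AX^{k}/H^{k-1/2},\,Q)\ll_{k,\eta,\theta,\tau}1.
\]
Next I would check that the scale $Y:=AX^{k}/H^{k-1/2}$ tends to infinity along the sequence. Indeed, $H\le X^\theta$ gives $H^{k-1/2}\le X^{\theta(k-1/2)}$, so
\[
Y=AX^{k}/H^{k-1/2}\ge A\,X^{\,k-\theta(k-1/2)}=A\,X^{\,k(1-\theta)+\theta/2},
\]
and the exponent $k(1-\theta)+\theta/2$ is strictly positive because $0<\theta<1$ and $k\ge 1$. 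Hence $Y$ grows like a fixed positive power of $X$, so $Y\to\infty$ and thus $\log\log Y\to\infty$.

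Finally, with $Q$ now a fixed modulus bound, I would invoke Proposition \ref{Halasz bound} at scale $Y$ to obtain, for $X$ large, $M(\mu;Y,Q)\ge (1/3-\epsilon)\log\log Y+O(1)$, which tends to infinity since $Y\to\infty$. This contradicts the uniform upper bound $M(\mu;Y,Q)\ll_{k,\eta,\theta,\tau}1$ derived above, and the contradiction yields the lemma. The only genuinely delicate point, and hence the main obstacle, is the bookkeeping of quantifiers: one must ensure that $A$ and $Q$ emerge from Lemma \ref{a lemma from fourier uniformity} as honest constants depending only on $k,\eta,\theta,\tau$ while $X$ ranges over the sequence, so that Proposition \ref{Halasz bound} may be applied at a single fixed $Q$ with argument $Y\to\infty$. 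Everything else reduces to the elementary inequality $k(1-\theta)+\theta/2>0$, which is what guarantees the relevant $\log\log Y$ diverges.
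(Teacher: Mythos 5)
Your proof is correct and is precisely the argument the paper intends: the paper gives no written proof of this lemma, merely asserting that it follows by combining Lemma \ref{a lemma from fourier uniformity} with Proposition \ref{Halasz bound}, which is exactly your contradiction argument. Your explicit check that $AX^{k}/H^{k-1/2}\to\infty$ (via the inequality $k(1-\theta)+\theta/2>0$) and your attention to the uniformity of $A,Q$ in $X$ supply the details the paper leaves implicit.
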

Now applying Lemma \ref{the distrbution of moebius in short interval}, we show the following result.
\begin{theorem}\label{k=1}
Let $\tau\in (5/8,1)$ be given. Let $\{N_{i}\}_{i=0}^{\infty}$ be an increasing sequence of natural numbers with $N_{0}=0$ and $N_{i+1}-N_{i}\geq \exp((\log i)^{\tau})$ for $i$ large enough, and let $\{p_{i}(y)\}_{i=0}^{\infty}$ be a sequence in $\mathbb{R}[y]$ of degrees less than $k$ for some positive integer $k$. Then
\begin{equation}\label{0816formula5}
\lim_{m\rightarrow \infty}\frac{1}{N_{m}}\sum_{i=0}^{m-1}\Bigg|\sum_{N_{i}\leq n<N_{i+1}}\mu(n)e(p_{i}(n))\Bigg|=0.
\end{equation}
\end{theorem}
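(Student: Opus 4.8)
The plan is to bound $\sum_{i=0}^{m-1}|S_i|$ by $o(N_m)$, where $S_i=\sum_{N_i\le n<N_{i+1}}\mu(n)e(p_i(n))$, using the short-interval average of Lemma~\ref{the distrbution of moebius in short interval} organised dyadically in the \emph{location} variable $n$. The decisive preliminary choice is to fix an auxiliary exponent $\tau'$ with $5/8<\tau'<\tau$ (possible exactly because $\tau>5/8$) and, at each dyadic scale $X=2^{s}$, to work with the single short-interval length $h=h_s:=\lceil\exp((\log X)^{\tau'})\rceil$; for large $s$ this $h$ lies in the admissible range $\exp((\log X)^{\tau'})\le h\le X^{\theta}$ (apply the Lemma with its exponent taken to be $\tau'$ and, say, $\theta=1/2$). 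I first cut every block at the dyadic points $2^{s}$, so that each resulting piece lies in a single $[2^{s},2^{s+1})$ and still carries one polynomial of degree $<k$; since $\sum_i\#\{s:2^{s}\in(N_i,N_{i+1})\}=O(\log N_m)$ telescopes, this cutting introduces only $O(\log N_m)$ extra pieces, whose total boundary cost is negligible.

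The engine is the elementary averaging identity $h\,S=\sum_{x\in\mathbb Z}T(x)$ for a piece with sum $S$, where $T(x)=\sum_{n\in[x,x+h)\cap\text{piece}}\mu(n)e(p(n))$, giving $|S|\le\frac1h\sum_{x}|T(x)|$. I split the starting points into \emph{interior} ones, where $[x,x+h)$ sits inside the piece, and \emph{boundary} ones, where it only partially meets it. For interior $x$ the phase is one polynomial $p\in\mathcal D_k$ on the whole window, so $|T(x)|\le\sup_{p\in\mathcal D_k}|\sum_{x\le n<x+h}\mu(n)e(p(n))|$; summed over all pieces at scale $s$, the interior starting points are \emph{distinct} integers of $[2^{s},2^{s+1})$, so after converting the integral in Lemma~\ref{the distrbution of moebius in short interval} into a sum over integer left-endpoints the total interior contribution is at most $\frac1h\sum_{2^{s}<b\le2^{s+1}}\sup_{p\in\mathcal D_k}|\sum_{b\le n<b+h}\mu(n)e(p(n))|<\frac1h\cdot\eta\,h\,2^{s}=\eta\,2^{s}$. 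This is the crux of the method: because the windows genuinely fit \emph{inside} the pieces, one sums over a full-density set of starting points and dominates it by the complete integral, thereby sidestepping the fact that an $L^1$ short-interval bound controls no fixed sparse tiling of starting points.

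It remains to show the boundary terms are $o(2^{s})$ per scale, and here the gap $\tau-\tau'>0$ does all the work. Each piece contributes $O(h)$ of boundary error ($O(h)$ boundary starting points, each $|T(x)|\le h$, divided by $h$), so the scale-$s$ boundary total is $O(h_s\nu_s)$ with $\nu_s$ the number of pieces. A piece with $\ell:=$ length $<h$ is \emph{short}: since $\ell\ge\exp((\log i)^{\tau})$ for its block index $i$, this forces $(\log i)^{\tau}<(\log X)^{\tau'}$, i.e.\ $i<\exp((\log X)^{\tau'/\tau})=X^{o(1)}$, so there are at most $X^{o(1)}$ short pieces, each of length $<h=X^{o(1)}$; their total length, hence their trivial contribution $\sum|S_i|\le\sum\ell_i$, is $o(X)$. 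For the remaining (long) pieces I bound $\nu_s$ by separating indices $i\le X^{1/2}$ (at most $X^{1/2}$ of them, boundary $O(hX^{1/2})=o(X)$) from indices $i>X^{1/2}$ (then $\ell_i\ge\exp((\tfrac12\log X)^{\tau})$, so there are at most $X/\exp((\tfrac12\log X)^{\tau})$ of them, boundary $O\!\big(X\exp((\log X)^{\tau'}-(\tfrac12\log X)^{\tau})\big)=o(X)$ since $\tau'<\tau$). Summing the per-scale estimate $\sum_{N_i\in[2^{s},2^{s+1})}|S_i|\le\eta\,2^{s}+o(2^{s})$ over $s$ with $2^{s}\le N_m$, handling the $O(1)$ small scales $2^s\le X_0(\eta)$ trivially, dividing by $N_m$, and letting first $m\to\infty$ and then $\eta\to0$ yields the claim.

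The step I expect to be the main obstacle is precisely this boundary bookkeeping. Making the averaging identity pay off requires the windows to fit inside the blocks, so that blocks shorter than $\exp((\log N_i)^{\tau'})$ are genuinely too short to feed into Lemma~\ref{the distrbution of moebius in short interval} at their own location; the single device that rescues the argument is the choice $\tau'<\tau$, which simultaneously keeps $h$ above the Lemma's threshold, forces the unavoidable ``too short'' pieces to have small index (hence to be few and short), and makes the accumulated $O(h_s\nu_s)$ boundary negligible against $2^{s}$.
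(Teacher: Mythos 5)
Your proof is correct in substance, and it relies on the same key input as the paper --- Lemma \ref{the distrbution of moebius in short interval} combined with the hypothesis $N_{i+1}-N_i\ge\exp((\log i)^{\tau})$ --- but the combinatorial passage from the averaged short-interval bound to the block decomposition is genuinely different from the paper's proof of Theorem \ref{k=1}. The paper fixes a \emph{single} window length $h_m=\exp((\log N_m)^{(1-2\epsilon)\tau})$ for the whole range up to $N_m$, converts the Lemma (via a dyadic subdivision) into the all-starting-points bound (\ref{eq2}), and then pigeonholes over the $h_m$ residue classes of starting points to select one aligned tiling $S_{j_0}$ obeying (\ref{eq33}); each block is then covered by tiles of that single tiling at a cost of $2h_m$ per block, and the total overhead $2mh_m$ is negligible because the block-length hypothesis forces $m\le N_m/\exp((\log N_m)^{(1-\epsilon)\tau})$. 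You never pigeonhole: your identity $hS=\sum_x T(x)$ counts every window with multiplicity $h$, the interior windows are dominated directly by the Lemma's full average (interior starting points of disjoint pieces being distinct integers of $[2^s,2^{s+1})$), and the boundary windows are paid for by counting pieces scale by scale, with a window length $h_s$ adapted to the dyadic location rather than to the endpoint $N_m$. The paper's route buys brevity: one pigeonhole, one error term, one global count of $m$. Your route buys robustness and locality: no selection of a favorable tiling is needed, and each scale is handled with a window length matched to it, at the price of heavier bookkeeping (dyadic cutting, short/long pieces, small/large indices). Two loose ends, both trivially repaired: (i) in your short-piece count, the inequality $\ell\ge\exp((\log i)^{\tau})$ is valid only for pieces that are whole blocks, not for the pieces created by your dyadic cutting; but there are at most two cut pieces per scale, and any short one is disposed of by the trivial bound $|S|\le\ell<h_s$, whose total per scale is $o(2^s)$. (ii) The hypothesis $N_{i+1}-N_i\ge\exp((\log i)^{\tau})$ is assumed only for $i$ large, and Lemma \ref{the distrbution of moebius in short interval} applies only for $X$ beyond a threshold depending on $\eta$; the finitely many exceptional blocks and scales must therefore be absorbed into a constant depending on $\eta$ before dividing by $N_m$ --- you noted the second of these but not the first.
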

\begin{proof}
Given $\epsilon$ sufficiently small with $(1-2\epsilon)\tau>5/8$. Choose $h_{m}=\exp((\log N_{m})^{(1-2\epsilon)\tau})$. By Lemma \ref{the distrbution of moebius in short interval} and a dyadic subdivision, for $N_{m}$ large enough,
\begin{equation} \label{eq2}
\sum_{x=0}^{(\lfloor \frac{N_{m}}{h_{m}}\rfloor+1)h_{m}-1}\sup_{p(y)\in \mathcal{D}_{k}}\Bigg|\sum_{x\leq n< x+h_{m}}\mu(n)e(p(n))\Bigg| \leq
\epsilon N_{m} h_{m},
\end{equation}
where $\mathcal{D}_{k}$ is the set of polynomials in $\mathbb{R}[y]$ of degrees less than $k$.

Let $S_{j}=\{lh_{m}+j: l=0,1,\cdot\cdot\cdot, \lfloor\frac{N_{m}}{h_{m}}\rfloor\}$, for $0\leq j\leq h_{m}-1$.  Then, by (\ref{eq2}), there is a $j_{0}$ such that
\begin{equation} \label{eq33}
\sum_{x\in S_{j_{0}}}\sup_{p(y)\in \mathcal{D}_{k}}\Bigg|\sum_{x\leq n< x+h_{m}}\mu(n)e(p(n))\Bigg|\leq \epsilon N_{m}.
\end{equation}
Suppose $S_{j_{0}}\cap [N_{i},N_{i+1})=\{x_{1}^{(i)},x_{2}^{(i)},\cdot\cdot\cdot,x_{l_{i}}^{(i)}\}$, where $x_{1}^{(i)}<x_{2}^{(i)}<\cdot\cdot\cdot<x_{l_{i}}^{(i)}$, $i=0,1,\cdot\cdot\cdot,m-1$. Then \[\Bigg|\sum_{N_{i}\leq n< N_{i+1}}\mu(n)e(p_{i}(n))\Bigg|\leq\sum_{t=1}^{l_{i}-1}\sup_{p(y)\in \mathcal{D}_{k}}\Bigg|\sum_{x_{t}^{(i)}\leq n<x_{t}^{(i)}+h_{m}}\mu(n)e(p(n))\Bigg|+2h_{m}.\] So

\[\sum_{i=0}^{m-1}\Bigg|\sum_{N_{i}\leq n< N_{i+1}}\mu(n)e(p_{i}(n))\Bigg|\leq \sum_{x\in S_{j_{0}}}\sup_{p(y)\in \mathcal{D}_{k}}\Bigg|\sum_{x\leq n< x+h_{m}}\mu(n)e(p(n))\Bigg|+2mh_{m}.\]
By formula (\ref{eq33}),
\[\frac{1}{N_{m}}\sum_{i=0}^{m-1}\Bigg|\sum_{N_{i}\leq n< N_{i+1}}\mu(n)e(p_{i}(n))\Bigg|\leq \frac{\epsilon N_{m}}{N_{m}}+\frac{2mh_{m}}{N_{m}}.\]
Since $N_{i+1}-N_{i}\geq \exp((\log i)^{\tau})$ for $i$ large enough, we have $m\leq N_{m}/\big(\exp((\log N_{m})^{(1-\epsilon)\tau})\big)$ for $N_{m}$ sufficiently large. Inserting this into the above inequality and letting $m\rightarrow \infty$, we obtain \[\limsup_{m\rightarrow \infty}\frac{1}{N_{m}}\sum_{i=0}^{m-1}\Bigg|\sum_{N_{i}\leq n< N_{i+1}}\mu(n)e(p_{i}(n))\Bigg|\leq \epsilon.\] Letting $\epsilon\rightarrow 0$, we obtain equation (\ref{0816formula5}).
\end{proof}
Now we are ready to prove Theorem \ref{k=1rapiddecreasingspeed}, which states the M\"{o}bius disjointness of $e(f(n))$ with the $k$-th difference of $f(n)$ tending to zero as in formula (\ref{basic relation}).
\begin{proof}[Proof of Theorem \ref{k=1rapiddecreasingspeed}]
Firstly, by Proposition \ref{a lemma used in section 5}, for $j\geq k$, there are integers $a_k,...,a_j $ such that for each $n\in \N$,
\begin{align}
\sum_{s=k}^j a_s\cdot \triangle^k f(n+s-k) = f(n+j)-\sum_{l=0}^{k-1} f(n+l)\prod_{t=0,t\neq l}^{k-1} \frac{j-t}{l-t},
\end{align}
where $0\leq a_{s}\leq j^{k-1}$ for $s=k,\ldots,j$. Note that $\sum_{l=0}^{k-1} f(n+l)\prod_{t=0,t\neq l}^{k-1} \frac{j-t}{l-t}=f(n+j)$ for $j=0,\ldots,k-1$.
Then by condition (\ref{basic relation}), for any $j\in \mathbb{N}$,
\[
\Big\|f(n+j)-\sum_{l=0}^{k-1} f(n+l)\prod_{t=0,t\neq l}^{k-1} \frac{j-t}{l-t}\Big\|\leq \frac{Cj^{k}}{\exp((\log n)^{\tau})}.
\]
For any positive integer $M$ with $MC\geq 1$, choose $L_{0}=0$ and $L_{m}=2^{m}\lfloor \exp\big(\log^{\frac{1}{\tau}}(MC2^{mk})\big)+1\rfloor$ for $m=1,2,\ldots$. Then by the above inequality, we have for $n\geq L_{m}$ and $j=0,1,\ldots,2^{m}-1$,
\begin{equation}\label{0817formula8}
\Big\|f(n+j)-\sum_{l=0}^{k-1} f(n+l)\prod_{t=0,t\neq l}^{k-1} \frac{j-t}{l-t}\Big\|\leq \frac{1}{M}.
\end{equation}
Let $d_{m}=(L_{m+1}-L_{m})/2^{m}$. Setting $0=N_{0}<N_{1}<N_{2}<\cdot\cdot\cdot$ with $\{N_{0},N_{1},\ldots\}$ being the set of $\{L_{m}+t2^{m}: m\in \mathbb{N}, 0\leq t\leq d_{m}-1\}$. Assume $L_{m}=N_{k_{m}}$. Then $k_{m+1}=k_{m}+d_{m}$. Note that $k_{0}=0$. Then for $m\geq 1$, \[\exp\big(\log^{\frac{1}{\tau}}(MC2^{mk})\big)\ll k_{m}\ll m\exp\big(\log^{\frac{1}{\tau}}(MC2^{mk})\big).\]
Choose an appropriate $\epsilon>0$ with $\tau-\epsilon>5/8$. Then for $m$ large enough, we have $2^{m}\geq  \exp((\log k_{m+1})^{\tau-\epsilon})$. By the choice of $N_{i}$, this leads to $N_{i+1}-N_{i}>\exp((\log i)^{\tau-\epsilon})$ for $i$ large enough.
Define \[p_{M}(n)=\sum_{l=0}^{k-1}f(N_{i}+l)\prod_{t=0,t\neq l}^{k-1}\frac{n-N_{i}-t}{l-t}\] when $N_{i}\leq n< N_{i+1}$, $i=0,1,\ldots$.
Hence by Theorem \ref{k=1},
\[\lim_{s\rightarrow \infty} \frac{1}{N_{s}}\sum_{i=0}^{s-1}\sum_{N_i\leq n <N_{i+1}}
\mu(n)e(p_{M}(n))= 0,\]
and further by Lemma \ref{subsequence is equivalent all natural number},
\begin{equation}\label{claimed as in this theorem}
\lim_{N\rightarrow \infty} \frac{1}{N}\sum_{n=0}^{N-1}
\mu(n)e(p_{M}(n))= 0.
\end{equation}
Since $\sup_{n\in \mathbb{N}}\|f(n)-p_{M}(n)\|\leq \frac{1}{M}$ by equation (\ref{0817formula8}), $\lim_{M\rightarrow \infty}\sup_{n\in \mathbb{N}}|e(f(n))-e(p_{M}(n))|=0$. Hence it follows from equation (\ref{claimed as in this theorem}) that
\[\lim_{N\rightarrow \infty} \frac{1}{N}\sum_{n=0}^{N-1}
\mu(n)e(f(n))= 0\]
as claimed.
\end{proof}

\section*{Funding}

This work was supported by Grant TRT 0159 from the Templeton Religion Trust. F.W. supported  by the fellowship of China Postdoctoral Science Foundation [2020M670273].

\section*{Acknowledgments}

This research was begun when we visit Harvard University during the 2018-2019 academic year. We thank Arthur Jaffe for support and Liming Ge for valuable discussions. We would also like to thank Jie Wu, Jinxin Xue and Yitwah Cheung for very helpful suggestions and comments on the manuscript.

\appendix
\section{Some properties of the difference operator}
Recall that the difference operator $\triangle$ is defined as $(\triangle f)(n)=f(n+1)-f(n)$ for any arithmetic function $f$. In this section, we give some basic properties of the operator $\triangle^{k}$ for $k\in \mathbb{N}$, which are used in this paper. The following one can be easily deduced by induction.
\begin{proposition}\label{formula for the k-th defference}
For any $k\in \mathbb{N}$ and any arithmetic function $f$, we have
\begin{equation}\label{explicit formula for the k-th defference}
\triangle^{k}f(n)=\sum_{l=0}^{k}(-1)^{k-l}\binom{k}{l}f(n+l).
\end{equation}
\end{proposition}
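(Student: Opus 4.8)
The plan is to prove the identity by induction on $k$, using only the recursive definition $\triangle^{k+1}=\triangle\circ\triangle^{k}$ together with the Pascal rule for binomial coefficients. The base case $k=0$ is immediate, since $\triangle^{0}f(n)=f(n)$ and the right-hand side reduces to the single term $\binom{0}{0}f(n)=f(n)$; alternatively one may start from $k=1$, where the right-hand side is $(-1)^{1}\binom{1}{0}f(n)+(-1)^{0}\binom{1}{1}f(n+1)=f(n+1)-f(n)$, exactly matching the definition of $\triangle f(n)$.

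For the inductive step, I would assume the formula for some $k$ and compute
\[
\triangle^{k+1}f(n)=(\triangle^{k}f)(n+1)-(\triangle^{k}f)(n)
=\sum_{l=0}^{k}(-1)^{k-l}\binom{k}{l}f(n+1+l)-\sum_{l=0}^{k}(-1)^{k-l}\binom{k}{l}f(n+l).
\]
The key manipulation is to reindex the first sum by setting $l'=l+1$, so that it becomes $\sum_{l'=1}^{k+1}(-1)^{k+1-l'}\binom{k}{l'-1}f(n+l')$. Collecting the coefficient of $f(n+l)$ from both sums then yields, for $1\le l\le k$, the factor $(-1)^{k+1-l}\bigl(\binom{k}{l-1}+\binom{k}{l}\bigr)$, which equals $(-1)^{k+1-l}\binom{k+1}{l}$ by Pascal's identity; the boundary terms $l=0$ and $l=k+1$ supply the remaining coefficients $\binom{k+1}{0}$ and $\binom{k+1}{k+1}$ automatically. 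This gives $\triangle^{k+1}f(n)=\sum_{l=0}^{k+1}(-1)^{k+1-l}\binom{k+1}{l}f(n+l)$, completing the induction.

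There is no substantive obstacle here: the statement is a standard binomial identity, and the only step requiring any care is the bookkeeping in the reindex-and-combine stage, where one must verify that the shifted sum and the original sum share the same index range on the overlap $1\le l\le k$ before invoking $\binom{k}{l-1}+\binom{k}{l}=\binom{k+1}{l}$. Since this proposition is used only as an elementary tool elsewhere in the paper (for instance to rewrite the condition $\|\triangle^{k}f(n)-g(n)\|\to 0$ in terms of the finite linear combination $\sum_{l}(-1)^{k-l}\binom{k}{l}f(n+l)$), a clean two-line induction suffices and no refinement of the argument is needed.
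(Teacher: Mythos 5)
Your induction is correct: the base case, the reindexing $l'=l+1$, and the application of Pascal's identity $\binom{k}{l-1}+\binom{k}{l}=\binom{k+1}{l}$ all check out, and the boundary terms are handled properly. This is exactly the argument the paper has in mind---it states only that the formula ``can be easily deduced by induction''---so your proposal simply fills in the details of the paper's intended proof.
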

It is known that $\triangle^{k}f(n)\equiv 0$ if and only if $f(n)$ is a polynomial with degree $k-1$. Moreover, we have the following proposition, which is known as the Lagrange interpolating polynomial.
\begin{proposition}\label{expression with k-th difference zero}
Given integers $J,k$ with $J>k\geq 0$. Suppose that $f(0),\ldots,f(J-1)$ satisfy $\sum_{l=0}^{k}(-1)^{k-l}\binom{k}{l}f(n+l)=0$ for $n=0,\ldots,J-1-k$. Then
\[f(n)=\sum_{j=0}^{k-1}f(j)\prod_{\substack{0\leq i\leq k-1\\i\neq j}}\frac{n-i}{j-i}\]
for $n=0,\ldots,J-1$.
\end{proposition}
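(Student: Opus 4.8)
The plan is to recognize the hypothesis, via Proposition \ref{formula for the k-th defference}, as the statement $\triangle^k f(n)=0$ for $n=0,\ldots,J-1-k$, to introduce the claimed right-hand side as an auxiliary interpolating polynomial, and then to show that $f$ agrees with it by a short induction driven by the recurrence $\triangle^k=0$.

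First I would define
\[g(n)=\sum_{j=0}^{k-1}f(j)\prod_{\substack{0\leq i\leq k-1\\i\neq j}}\frac{n-i}{j-i},\]
the Lagrange interpolating polynomial through the $k$ nodes $(0,f(0)),\ldots,(k-1,f(k-1))$. Two standard properties of $g$ are needed. Since each factor $\prod_{i\neq j}\frac{n-i}{j-i}$ evaluates to the Kronecker delta at the integer nodes $0,\ldots,k-1$, we have $g(j)=f(j)$ for $j=0,\ldots,k-1$. Moreover $g$ is a polynomial of degree at most $k-1$, so by the fact recalled just before this proposition (that $\triangle^k$ annihilates exactly the polynomials of degree less than $k$), we get $\triangle^k g(n)=0$ for every $n$.

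Next I would put $h(n)=f(n)-g(n)$ on $\{0,\ldots,J-1\}$. By construction $h(j)=0$ for $j=0,\ldots,k-1$, and $\triangle^k h(n)=\triangle^k f(n)-\triangle^k g(n)=0$ for $n=0,\ldots,J-1-k$; the goal becomes $h\equiv 0$. Isolating the top term in Proposition \ref{formula for the k-th defference}, the relation $\triangle^k h(n)=0$ reads
\[h(n+k)=-\sum_{l=0}^{k-1}(-1)^{k-l}\binom{k}{l}h(n+l),\]
so $h(n+k)$ is completely determined by the preceding $k$ values $h(n),\ldots,h(n+k-1)$. Starting from the base case $h(0)=\cdots=h(k-1)=0$ and letting $n$ run from $0$ up to $J-1-k$, an immediate induction yields $h(k)=h(k+1)=\cdots=h(J-1)=0$. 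Hence $f(n)=g(n)$ for all $n=0,\ldots,J-1$, which is exactly the claimed identity.

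There is no serious obstacle here; the argument is routine once the Lagrange polynomial is written down. The only two points deserving a line of justification are the interpolation identity $g(j)=f(j)$ and the vanishing of $\triangle^k$ on polynomials of degree less than $k$, both classical. The induction then closes trivially, because the defining recurrence expresses each new value of $h$ solely in terms of the preceding $k$ values, all of which are already known to vanish.
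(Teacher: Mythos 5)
Your proof is correct and follows essentially the same route as the paper: both define the Lagrange interpolating polynomial $g$ through the nodes $0,\ldots,k-1$, observe that $g$ agrees with $f$ there and that $\triangle^k g\equiv 0$, and then conclude $f=g$ on $\{0,\ldots,J-1\}$ because the recurrence $\triangle^k=0$ determines all later values from the first $k$. Your explicit induction on $h=f-g$ is just a spelled-out version of the paper's appeal to uniqueness of the solution of the recurrence given the initial values.
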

\begin{proof}
Suppose $g(n)=\sum_{j=0}^{k-1}f(j)\prod_{\substack{0\leq i\leq k-1\\i\neq j}}\frac{n-i}{j-i}$. Then $g(n)$ is a polynomial of degree at most $k-1$ and satisfies $g(0)=f(0),\ldots,g(k-1)=f(k-1)$. By Proposition \ref{formula for the k-th defference},
\[\triangle^{k}g(n)=\sum_{l=0}^{k}(-1)^{k-l}\binom{k}{l}g(n+l)=0,\:n\geq 0.\]
For any given $x_{0},\ldots,x_{k-1}$, the solution $(x_{k},\ldots,x_{J-1})$ that satisfies $\sum_{l=0}^{k}(-1)^{k-l}\binom{k}{l}x_{n+l}=0$ for $n=0,\ldots,J-k-1$ is unique. Then $(f(k),\ldots,f(J-1))=(g(k),\ldots,g(J-1))$. Hence $f(n)$ is of the form as claimed in this proposition.
\end{proof}
The following simple fact is used in this paper.
\begin{proposition}\label{coefficients are integers}
For any $n\in \mathbb{N}$, $k\geq 1$ and $0\leq j\leq k-1$, $\prod_{\substack{0\leq i\leq k-1\\i\neq j}}\frac{n-i}{j-i}$ is an integer.
\end{proposition}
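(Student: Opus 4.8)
The plan is to realize the rational product as a value of an integer-valued linear recurrence, leaning on the two propositions already established in this appendix. Fix $k\geq 1$ and $0\leq j\leq k-1$, and introduce the arithmetic function $f$ determined by the initial data $f(i)=1_{i=j}$ for $0\leq i\leq k-1$ together with the recurrence
\[
f(n+k)=-\sum_{l=0}^{k-1}(-1)^{k-l}\binom{k}{l}f(n+l),\qquad n\geq 0.
\]
By Proposition \ref{formula for the k-th defference} this recurrence is exactly the assertion $\triangle^k f(n)=0$ for every $n\geq 0$, since the $l=k$ term of (\ref{explicit formula for the k-th defference}) carries coefficient $\binom{k}{k}=1$.

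Next I would invoke Proposition \ref{expression with k-th difference zero}. Because $\triangle^k f\equiv 0$, that proposition, applied for each $J>k$ and hence valid for all $n\in\mathbb{N}$, yields
\[
f(n)=\sum_{j'=0}^{k-1}f(j')\prod_{\substack{0\leq i\leq k-1\\ i\neq j'}}\frac{n-i}{j'-i}
=\prod_{\substack{0\leq i\leq k-1\\ i\neq j}}\frac{n-i}{j-i},
\]
the last equality holding because $f(j')=1_{j'=j}$. Thus the product in the statement is literally the value $f(n)$, and it remains only to check that $f(n)\in\mathbb{Z}$.

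Finally I would establish $f(n)\in\mathbb{Z}$ by induction on $n$. The base cases $f(0),\ldots,f(k-1)\in\{0,1\}$ are integers by construction. For $n\geq k$, the recurrence displays $f(n)$ as a $\mathbb{Z}$-linear combination of $f(n-k),\ldots,f(n-1)$, since each coefficient $(-1)^{k-l}\binom{k}{l}$ is an integer; the inductive hypothesis then gives $f(n)\in\mathbb{Z}$. Combining this with the identification above proves the claim.

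I do not anticipate a real obstacle here: the only delicate point is that the recurrence be monic in its leading term, so that the induction actually propagates, and this is guaranteed by $\binom{k}{k}=1$. A more computational alternative would bypass the recurrence entirely, rewriting the product through factorials as $(-1)^{k-1-j}\binom{n}{j}\binom{n-j-1}{k-1-j}$ for $n\geq k$ while noting that for $0\leq n\leq k-1$ the product equals $1_{n=j}$ (a numerator factor vanishes unless $n=j$); but that route forces one to track signs and separate small cases, so I prefer the recurrence argument, which reuses the appendix's own machinery cleanly.
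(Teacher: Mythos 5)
Your proof is correct, but it takes a genuinely different route from the paper's. The paper argues by direct computation: for $0\le n\le k-1$ the product is $0$ (when $n\neq j$) or $1$ (when $n=j$) by inspection, and for $n\ge k$ it rewrites the product through factorials as $(-1)^{k-j-1}\binom{n-j-1}{n-k}\binom{n}{n-j}$, a signed product of two binomial coefficients, hence an integer --- precisely the ``computational alternative'' you sketched at the end of your proposal. Your argument is instead structural: you realize the product as the $n$-th term of the sequence generated by the monic integer recurrence $\triangle^k f=0$ with indicator initial data $f(i)=1_{i=j}$, identify that sequence with the product via the Lagrange-interpolation Proposition \ref{expression with k-th difference zero}, and then propagate integrality by induction along the recurrence. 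This reuses the appendix's own machinery, avoids all sign-tracking and factorial manipulation, and is free of circularity, since Propositions \ref{formula for the k-th defference} and \ref{expression with k-th difference zero} are stated and proved before, and independently of, the present statement. What it gives up is the explicit closed form: the paper's computation exhibits the product concretely as $(-1)^{k-j-1}\binom{n-j-1}{n-k}\binom{n}{n-j}$, which is slightly stronger information than bare integrality and could be quoted elsewhere if needed.
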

\begin{proof}
For $0\leq n\leq k-1$ and $n\neq j$, $\prod_{\substack{0\leq i\leq k-1\\i\neq j}}\frac{n-i}{j-i}=0$; for $n=j$, $\prod_{\substack{0\leq i\leq k-1\\i\neq j}}\frac{n-i}{j-i}=1$; for $n\geq k$,
\begin{align*}
 \prod_{\substack{0\leq i\leq k-1\\i\neq j}}\frac{n-i}{j-i}&= \prod_{0\leq i\leq j-1}\frac{n-i}{j-i}\prod_{j+1\leq i\leq k-1}\frac{n-i}{j-i} \\
  &=(-1)^{k-j-1}\frac{n!}{(n-k)!j!(k-1-j)!(n-j)}\\
  &=(-1)^{k-j-1}\binom{n-j-1}{n-k}\binom{n}{n-j}
\end{align*}
is an integer.
\end{proof}\
The next one gives a variant version of Proposition \ref{expression with k-th difference zero}.
\begin{lemma}\label{equivalent conditions}
Given integers $J,k$ with $J>k\geq 0$. Suppose that $x_{0},\ldots, x_{J-1}\in \mathbb{R}$. Then the following two statements are equivalent.
\begin{itemize}
\item[(i)] For $n=0,\ldots,J-1-k$,
\begin{equation}\label{lemma1.151}
\{\sum_{l=0}^{k}(-1)^{k-l}\binom{k}{l}\{x_{n+l}\}\}=0.
\end{equation}

\item[(ii)] For $n=0,\ldots,J-1$,
\begin{equation}\label{lemma1.152}
\{x_{n}\}=\{\sum_{j=0}^{k-1}\{x_{j}\}\prod_{\substack{0\leq i\leq k-1\\i\neq j}}\frac{n-i}{j-i}\},
\end{equation}
where $\{\cdot\}$ denotes the fractional part function.
\end{itemize}
\end{lemma}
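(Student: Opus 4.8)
The plan is to work entirely with the fractional parts $y_n := \{x_n\} \in [0,1)$ and to recast both statements as congruences modulo $1$. By Proposition \ref{formula for the k-th defference}, the inner alternating sum in (i) is precisely $\triangle^k y(n)$, so condition (i) is nothing but the assertion that $\triangle^k y(n) \in \mathbb{Z}$ for $n = 0, \ldots, J-1-k$. On the other hand, writing $z_n := \sum_{j=0}^{k-1} y_j \prod_{\substack{0\le i\le k-1\\ i\ne j}}\frac{n-i}{j-i}$ for the Lagrange polynomial through $y_0, \ldots, y_{k-1}$, condition (ii) is exactly the statement $y_n \equiv z_n \pmod 1$ for $n = 0,\ldots,J-1$. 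Thus the whole lemma amounts to relating ``$\triangle^k y$ is integer-valued'' to ``$y$ agrees mod $1$ with its degree-$(k-1)$ interpolant.''

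For the implication (ii) $\Rightarrow$ (i), I would note that $z_n$ is a polynomial in $n$ of degree at most $k-1$, so $\triangle^k z(n) = 0$ for all $n$. Since $\triangle^k$ has integer coefficients $\binom{k}{l}$, applying it to the congruence $y_n \equiv z_n \pmod 1$ yields $\triangle^k y(n) \equiv \triangle^k z(n) = 0 \pmod 1$, which is (i). This direction is immediate once the reformulation is in place.

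The substantive direction is (i) $\Rightarrow$ (ii), and here the idea is to lift the fractional parts to a genuine solution of $\triangle^k = 0$ carrying the same residues modulo $1$. I would set $z_j = y_j$ for $0\le j\le k-1$ and define $z_n$ for $k \le n \le J-1$ recursively by imposing $\sum_{l=0}^{k}(-1)^{k-l}\binom{k}{l} z_{n-k+l} = 0$, i.e. by solving $\triangle^k z = 0$ forward. I would then prove $z_n \equiv y_n \pmod 1$ by induction on $n$: the cases $n<k$ hold by definition, and for $n \ge k$ the recurrence writes $z_n$ as a fixed integer combination of $z_{n-k},\ldots,z_{n-1}$, while hypothesis (i), being exactly $\triangle^k y(n-k)\in\mathbb{Z}$, writes $y_n$ as the \emph{same} integer combination of $y_{n-k},\ldots,y_{n-1}$ up to an integer; the inductive hypothesis then closes the step. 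Once $z_n\equiv y_n\pmod 1$ is established, Proposition \ref{expression with k-th difference zero} applies to $z$ and gives the closed form $z_n = \sum_{j=0}^{k-1}\{x_j\}\prod_{\substack{0\le i\le k-1\\ i\ne j}}\frac{n-i}{j-i}$; taking fractional parts of both sides of $z_n\equiv y_n$ produces precisely (ii).

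The main obstacle is the construction and verification of this lift $z_n$: one must check that the forward recurrence is consistent with the hypothesis across the entire range $0\le n\le J-1-k$ and, above all, that the mod-$1$ congruence propagates through it. The engine that makes this work is the integrality of the binomial coefficients $\binom{k}{l}$ appearing in $\triangle^k$, which guarantees that reduction modulo $1$ commutes with the difference operator (the Lagrange weights are likewise integer-valued by Proposition \ref{coefficients are integers}, which keeps the final comparison transparent). The two descriptions of $z_n$ — as the recurrence solution and as the explicit interpolant — coincide by the uniqueness built into Proposition \ref{expression with k-th difference zero}, and it is this coincidence that links the integrality condition (i) to the explicit formula displayed in (ii).
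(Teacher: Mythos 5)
Your proof is correct and follows essentially the same route as the paper's: in both arguments the direction (ii)$\Rightarrow$(i) applies $\triangle^k$ with its integer coefficients to the degree-$(k-1)$ interpolant, and the direction (i)$\Rightarrow$(ii) hinges on producing an exact solution of $\triangle^k=0$ agreeing with the data modulo $1$, followed by Propositions \ref{expression with k-th difference zero} and \ref{coefficients are integers}. The only cosmetic difference is that the paper lifts the original values $x_n$ by successively chosen integers $c_n$ so that $x_n+c_n$ solves the recurrence exactly, whereas you solve the recurrence forward from the fractional parts $\{x_0\},\ldots,\{x_{k-1}\}$ and propagate the mod-$1$ agreement by induction; the two constructions are interchangeable.
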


\begin{proof}
(i) $\Rightarrow$ (ii). We first show that there are integers $c_{0},c_{1},\ldots,c_{J-1}$ such that $x_{0}+c_{0},x_{1}+c_{1},\ldots,x_{J-1}+c_{J-1}$ satisfy the following linear equations
\begin{equation}\label{equation without fraction part}
\sum_{l=0}^{k}(-1)^{k-l}\binom{k}{l}(x_{n+l}+c_{n+l})=0,\;n=0,\ldots,J-1-k.
\end{equation}
Let $c_{0}=\cdot\cdot\cdot=c_{k-1}=0$. Letting $n=0$ in equation (\ref{equation without fraction part}), by equation (\ref{lemma1.151}) we have that the solution $c_{k}$ is an integer. Repeating the above process with $n=1,\ldots,J-1-k$, we obtain successively solutions $c_{k+1},\ldots,c_{J-1}$, which are all integers. By equation (\ref{equation without fraction part}) and Proposition \ref{expression with k-th difference zero},
\[x_{n}+c_{n}=\sum_{j=0}^{k-1}(x_{j}+c_{j})\prod_{\substack{0\leq i\leq k-1\\i\neq j}}\frac{n-i}{j-i},\; n=0,\ldots, J-1.\]
By Proposition \ref{coefficients are integers}, we have equation (\ref{lemma1.152}).

(ii)$\Rightarrow$ (i). Assume that $x_{0},x_{1},\ldots,x_{J-1}$ satisfy equation (\ref{lemma1.152}). Let
\[g(n)=\sum_{j=0}^{k-1}\{x_{j}\}\prod_{\substack{0\leq i\leq k-1\\i\neq j}}\frac{n-i}{j-i},\;n=0,\dots,J-1.\]
Then $g(n)$ is a polynomial of degree at most $k-1$. By Proposition \ref{formula for the k-th defference},
\[\triangle^{k}g(n)=\sum_{l=0}^{k}(-1)^{k-l}\binom{k}{l}g(n+l)=0,\:n\geq 0.\]
By (ii), $\{g(0)\}=\{x_{0}\},\{g(1)\}=\{x_{1}\},\ldots, \{g(J-1)\}=\{x_{J-1}\}$. Then
\[\{\sum_{l=0}^{k}(-1)^{k-l}\binom{k}{l}\{x_{n+l}\}\}=\{\sum_{l=0}^{k}(-1)^{k-l}\binom{k}{l}\{g(n+l)\}\}=0,\:n=0,\ldots,J-1-k.\]
\end{proof}
The following gives an estimate of $f(n)$ through the initial values and the upper bound of $\triangle^{k}f(n)$.
\begin{proposition}\label{estimate of the value at every point}
Given $n\in \N$ and a real number $c>0$.
Suppose integers $J> k\geq 0$.
If $f(n),f(n+1),\ldots,f(n+J-1)$ satisfy:

(a) $|\triangle^k f(n+j)|\le c$, $j=0,1,..., J-1-k$;

(b) $f(n+j)\in [0,c]$, $j=0,1,...,k-1$.\\
Then we have
\[|f(n+j)|\le (k+1)j^kc,\;j=k,k+1,...,J-1. \]
\end{proposition}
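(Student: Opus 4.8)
The plan is to reindex by writing $\phi(j)=f(n+j)$ for $0\le j\le J-1$, so that hypothesis (a) reads $|\triangle^k\phi(j)|\le c$ for $0\le j\le J-1-k$ and (b) reads $\phi(j)\in[0,c]$ for $0\le j\le k-1$; the goal becomes $|\phi(j)|\le(k+1)j^kc$ for $k\le j\le J-1$. The case $k=0$ is immediate, since then (a) says $|\phi(j)|\le c$ while the conclusion asks for $|\phi(j)|\le c$, so I assume $k\ge1$ throughout.

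The governing idea is to separate the contribution of the $k$ initial values from that of the bounded $k$-th difference. Let $g$ be the degree-$\le k-1$ Lagrange interpolating polynomial determined by $g(l)=\phi(l)$ for $l=0,\ldots,k-1$, namely $g(j)=\sum_{l=0}^{k-1}\phi(l)\prod_{t\ne l}\frac{j-t}{l-t}$, which satisfies $\triangle^kg\equiv0$ (this is precisely the setting of Proposition \ref{expression with k-th difference zero}). Put $e(j)=\phi(j)-g(j)$. Then $e(0)=\cdots=e(k-1)=0$ and, since $\triangle^kg\equiv0$, we get $\triangle^ke(j)=\triangle^k\phi(j)$, so $|\triangle^ke(j)|\le c$ for all $j$ in range.

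The heart of the argument is to bound $e$ by iterating the summation operator $k$ times. Because $\triangle^re(0)=0$ for every $r=0,\ldots,k-1$ (each such difference involves only the vanishing values $e(0),\ldots,e(r)$, by Proposition \ref{formula for the k-th defference}), the functions $e_i:=\triangle^{k-i}e$ satisfy $e_i(0)=0$ for $1\le i\le k$ together with $\triangle e_i=e_{i-1}$, whence $e_i(m)=\sum_{s=0}^{m-1}e_{i-1}(s)$. Starting from $|e_0|=|\triangle^ke|\le c$ and applying the hockey-stick identity $\sum_{s=0}^{m-1}\binom{s}{i-1}=\binom{m}{i}$ inductively, I obtain $|e_i(m)|\le\binom{m}{i}c$, and in particular $|e(j)|=|e_k(j)|\le\binom{j}{k}c$. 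Meanwhile each interpolation coefficient satisfies $|\prod_{t\ne l}\frac{j-t}{l-t}|\le j^{k-1}/\big(l!(k-1-l)!\big)$ for $j\ge k$, so that $|g(j)|\le c\,\tfrac{2^{k-1}}{(k-1)!}\,j^{k-1}$, a genuinely lower-order term of degree $k-1$.

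Finally I combine $|\phi(j)|\le|g(j)|+|e(j)|\le c\big(\tfrac{2^{k-1}}{(k-1)!}j^{k-1}+\binom{j}{k}\big)$ and, using $j\ge k$ to trade the factor $j^{k-1}$ for $j^k/k$, check that the total is at most $(k+1)j^kc$; this reduces to the elementary inequality $2^{k-1}+1\le(k+1)!$, valid for all $k\ge1$. I expect the main obstacle to be exactly this last bookkeeping: the crude exponential constants produced by the interpolation coefficients must be shown to be dominated by $(k+1)$ once the lower powers of $j$ are absorbed into $j^k$ via $j\ge k$. The conceptual content, however, is the clean split of $f$ into a degree-$(k-1)$ interpolant of the initial data plus an error whose size is controlled entirely by $\|\triangle^kf\|_\infty$ through repeated summation.
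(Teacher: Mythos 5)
Your proof is correct, but it takes a genuinely different route from the paper's. The paper works directly with the intermediate differences $\triangle^m f$: it first propagates the initial-value bound to get $|\triangle^m f(n+j)|\le 2^{m-1}c$ for small $j$, then runs a double induction (downward on $m$, upward on $j$) to establish a binomial-weighted bound on $|\triangle^m f(n+j)|$ mixing both hypotheses, and finally sums first differences to reach $(k+1)j^kc$. You instead split $\phi=g+e$, where $g$ is the Lagrange interpolant of the $k$ initial values (so $\triangle^k g\equiv 0$) and $e$ vanishes at $0,\ldots,k-1$; the two hypotheses then act on disjoint pieces: (b) controls $g$ via the coefficient bound $\lvert\prod_{t\neq l}\tfrac{j-t}{l-t}\rvert\le j^{k-1}/(l!(k-1-l)!)$, and (a) controls $e$ via $k$-fold anti-differencing and the hockey-stick identity, giving $|e(j)|\le\binom{j}{k}c$. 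All the steps check out: the domains of the $e_i$ match up so the iterated summation never leaves the range where $|\triangle^k e|\le c$ is known, and the final reduction to $2^{k-1}+1\le(k+1)!$ is valid for $k\ge 1$ (with $k=0$ handled trivially). What your approach buys is transparency and in fact a sharper constant, $\tfrac{2^{k-1}+1}{k!}j^k c$, which is far smaller than $(k+1)j^kc$ for large $k$; it also harmonizes with the Lagrange-interpolation machinery the paper already uses in Propositions \ref{expression with k-th difference zero} and \ref{a lemma used in section 5} and Lemma \ref{approximated by polynomials}. What the paper's double induction buys is that it is entirely self-contained arithmetic on differences, never invoking interpolation-coefficient estimates; but it is considerably more intricate to verify.
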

\begin{proof}
By (b), for $j=0,1,...,k-2$,
\[|\triangle f(n+j)|\le c,\]
and by induction,
\begin{align}\label{0619shi1}
|\triangle^m f(n+j)|\le 2^{m-1}c,\; j=0,1,...,k-m-1,\;1\le m\le k-1.
\end{align}
We first claim that, when $1\le m\le k$ and $ k-m\le j\le J-1-m$, we have
\begin{align}\label{0619shi2}
|\triangle^m f(n+j)|\le \sum_{i=0}^{k-m-1} \binom{j-(k-m)+i}{j-(k-m)} 2^{m-1+i}c+\binom{j}{k-m}c,
\end{align}
where
$\binom{0}{0}=1$. In the following we shall prove formula (\ref{0619shi2}).
When $m=k $,
the inequality (\ref{0619shi2}) holds by (a).
Assume inductively that formula (\ref{0619shi2}) holds when $ m=m_0+1\le k $. We shall prove that formula (\ref{0619shi2}) holds when $m=m_{0}$ and $k-m_{0}\leq j\leq J-1-m_{0}$.
For $j=k-m_{0}$, we have
\[|\triangle^{m_{0}} f(n+k-m_{0})|\le | \triangle^{m_{0}+1} f(n+k-m_{0}-1) |+| \triangle^{m_0} f(n+k-m_{0}-1) |. \]
Then by the inductive hypothesis on the $m_{0}+1$ case and formula (\ref{0619shi1}),
\begin{align*}
|\triangle^{m_{0}} f(n+k-m_{0})|\le & 2^{m_{0}-1}c+ \sum_{i=0}^{k-m_{0}-2} \binom{k-m_{0}-1-(k-m_{0}-1)+i}{k-m_{0}-1-(k-m_{0}-1)} 2^{m_{0}+i}c\\
&+\binom{k-m_{0}-1}{k-m_{0}-1}c\\
=& \sum_{i=0}^{k-m_{0}-1} \binom{k-m_{0}-(k-m_{0})+i}{k-m_{0}-(k-m_{0})} 2^{m_{0}-1+i}c+\binom{k-m_{0}}{k-m_{0}}c.
\end{align*}
Note that in the above formula the coefficients before $2^{m_{0}+i}c$ and $2^{m_{0}-1+i}c$ are all $1$.
So formula (\ref{0619shi2}) holds for $m=m_{0}$ and $j=k-m_{0}$.
Now assume inductively that formula (\ref{0619shi2}) holds for $m=m_{0}$ and some $j=j_0\ge k-m_{0}$. When $j=j_0+1$,
we have
\begin{align*}
|\triangle^{m_{0}} f(n+j_{0}+1)|\le& | \triangle^{m_{0}} f(n+j_0) |+| \triangle^{m_{0}+1} f(n+j_0) |\\
\le & \sum_{i=0}^{k-m_{0}-1} \binom{j_0-(k-m_{0})+i}{j_0-(k-m_{0})} 2^{m_{0}-1+i}c+\binom{j_0}{k-m_{0}}c \\
&+ \sum_{i=0}^{k-m_{0}-2} \binom{j_0-(k-m_{0}-1)+i}{j_0-(k-m_{0}-1)} 2^{m_{0}+i}c+\binom{j_0}{k-m_{0}-1}c\\
= & \sum_{i=0}^{k-m_{0}-1} \binom{j_0-(k-m_{0})+i}{j_0-(k-m_{0})} 2^{m_{0}-1+i}c+\binom{j_0}{k-m_{0}}c \\
&+ \sum_{i=1}^{k-m_{0}-1} \binom{j_0-(k-m_{0})+i}{j_0-(k-m_{0}-1)} 2^{m_{0}-1+i}c+\binom{j_0}{k-m_{0}-1}c\\
= & \sum_{i=1}^{k-m_{0}-1} \binom{j_0-(k-m_{0})+i+1}{j_0-(k-m_{0})+1} 2^{m_{0}-1+i}c+2^{m_{0}-1}c+ \binom{j_0+1}{k-m_{0}}c\\
=& \sum_{i=0}^{k-m_{0}-1} \binom{j_{0}+1-(k-m_{0})+i}{j_{0}+1-(k-m_{0})} 2^{m_{0}-1+i}c+ \binom{j_{0}+1}{k-m_{0}}c.
\end{align*}
Then formula (\ref{0619shi2}) holds by the above induction process.
In particular,
taking $m=1$ in formula (\ref{0619shi2}), for $k-1\leq j\leq J-2$,
\begin{align*}
|\triangle f(n+j)|\le \sum_{i=0}^{k-2} \binom{j-k+i+1}{j-k+1} 2^{i}c+\binom{j}{k-1}c.
\end{align*}
Hence for $k\leq j\leq J-1$, by the above inequality,
\begin{align*}
|f(n+j)|\le &|f(n+k-1) |+ \sum_{l=k-1}^{j-1} |\triangle f(n+l)|\\
\le &c+ \sum_{l=k-1}^{j-1} \left(\sum_{i=0}^{k-2} \binom{l-k+i+1}{l-k+1} 2^{i}+\binom{l}{k-1}\right)c\\
= & c+ \sum_{i=0}^{k-2}  \binom{j-1-k+i+2}{i+1}2^ic+ \binom{j}{k}c.
\end{align*}
Since $\binom j k\le j^k$ and $\binom{j-k+i+1}{j-k}2^i\le j^{i+1} $ when $ 0\le i\le k-2$,
\[|f(n+j)|\le (k+1) j^kc,\; k\le j\le J-1. \]
This completes the proof.
\end{proof}
The following proposition shows that $f$ can be approached by polynomials if the $k$-th difference of $f$ is small. This proposition will be used in the proof of Theorem \ref{k=1rapiddecreasingspeed} in Section 5.
\begin{proposition}\label{a lemma used in section 5}
Given $j\geq k\geq 1$.
There are constants $a_k,...,a_j $ such that for each arithmetic function $f$ and each $n\in \N$,
\begin{align}\label{0822shi1}
\sum_{l=k}^j a_l\cdot \triangle^k f(n+l-k) = f(n+j)-\sum_{m=0}^{k-1} f(n+m)\prod_{i=0,i\neq m}^{k-1} \frac{j-i}{m-i}.
\end{align}
\end{proposition}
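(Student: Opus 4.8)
The plan is to work in the operator calculus of the shift and difference operators and to reduce the identity to elementary binomial manipulations. Write $E$ for the shift $f(\cdot)\mapsto f(\cdot+1)$, so that $\triangle=E-I$ and $E=I+\triangle$. Since $E$ and $\triangle$ commute, the finite binomial expansion of $(I+\triangle)^{j}$ gives the exact Newton forward formula
\begin{equation}\label{plan:newton}
f(n+j)=(E^{j}f)(n)=\sum_{r=0}^{j}\binom{j}{r}\triangle^{r}f(n),
\end{equation}
valid for every arithmetic function $f$, since $\binom{j}{r}=0$ for $r>j$. The right-hand side of the asserted identity is $f(n+j)$ minus the value at $n+j$ of the Lagrange interpolant of $f$ through the nodes $n,\ldots,n+k-1$, so the first task is to express that interpolation term through the very same iterated differences.

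First I would observe that the polynomial $y\mapsto\sum_{r=0}^{k-1}\binom{y}{r}\triangle^{r}f(n)$ has degree at most $k-1$ and, by \eqref{plan:newton} applied with $j$ replaced by each of $0,\ldots,k-1$, takes the values $f(n),\ldots,f(n+k-1)$ at $y=0,\ldots,k-1$. Its $k$-th difference vanishes identically, so by the uniqueness statement of Proposition \ref{expression with k-th difference zero} it coincides with the Lagrange interpolant; evaluating at $y=j$ yields
\[
\sum_{m=0}^{k-1}f(n+m)\prod_{\substack{0\le i\le k-1\\ i\neq m}}\frac{j-i}{m-i}=\sum_{r=0}^{k-1}\binom{j}{r}\triangle^{r}f(n).
\]
Subtracting this from \eqref{plan:newton} collapses the right-hand side of the proposition to the high-order tail
\begin{equation}\label{plan:tail}
f(n+j)-\sum_{m=0}^{k-1}f(n+m)\prod_{\substack{0\le i\le k-1\\ i\neq m}}\frac{j-i}{m-i}=\sum_{r=k}^{j}\binom{j}{r}\triangle^{r}f(n).
\end{equation}
Second, I would rewrite each term with $r\ge k$ by applying Proposition \ref{formula for the k-th defference} to the function $\triangle^{k}f$: since $\triangle^{r}f(n)=\triangle^{r-k}(\triangle^{k}f)(n)$, each $\triangle^{r}f(n)$ is an explicit integer combination of the values $\triangle^{k}f(n+s)$ for $0\le s\le r-k$. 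Substituting into \eqref{plan:tail} and collecting the coefficient of $\triangle^{k}f(n+s)=\triangle^{k}f(n+(k+s)-k)$ produces constants $a_{k+s}$ depending only on $j$ and $k$, which is exactly the claimed identity.

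In fact I expect the bookkeeping to produce the closed form $a_{l}=\binom{j-l+k-1}{k-1}$ for $k\le l\le j$; one verifies this against \eqref{plan:tail} using the Newton expansion of $\triangle^{k}f$ together with the binomial convolution $\sum_{s=u}^{j-k}\binom{j-s-1}{k-1}\binom{s}{u}=\binom{j}{k+u}$. This closed form makes the $a_{l}$ manifestly nonnegative integers with $a_{l}\le\binom{j-1}{k-1}\le j^{k-1}$, which also recovers the quantitative bound invoked later in the proof of Theorem \ref{k=1rapiddecreasingspeed}. The only genuinely delicate points are the identification of the interpolation term with the truncated Newton sum in \eqref{plan:tail} and the verification of the binomial convolution above; everything else is formal manipulation with commuting operators. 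The independence of the $a_{l}$ from $n$ is automatic, since \eqref{plan:newton} and \eqref{plan:tail} involve only the local window $f(n),\ldots,f(n+j)$ through $n$-free coefficients.
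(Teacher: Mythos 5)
Your proof is correct, but it follows a genuinely different route from the paper's. The paper guesses the closed form up front, setting $a_l=g(l)$ with $g(l)=\binom{j-l+k-1}{k-1}$, expands the left-hand side of \eqref{0822shi1} into a combination of the values $f(n+m)$ via Proposition \ref{formula for the k-th defference}, kills every coefficient with $k\le m\le j-1$ because that coefficient equals $(-1)^k\triangle^k g(m)=0$ ($g$ being a polynomial of degree $k-1$ in $l$), and finally identifies the surviving coefficients $c_m$ by testing the identity on the Lagrange basis polynomials $f_m$, which satisfy $\triangle^k f_m\equiv 0$. You instead manipulate the right-hand side: Newton's forward-difference expansion $f(n+j)=\sum_{r=0}^{j}\binom{j}{r}\triangle^r f(n)$, the observation that the Lagrange interpolation term is exactly the truncated sum $\sum_{r=0}^{k-1}\binom{j}{r}\triangle^r f(n)$, and then the rewriting of the tail $\sum_{r=k}^{j}\binom{j}{r}\triangle^{r-k}(\triangle^k f)(n)$ as a combination of the shifted values $\triangle^k f(n+s)$. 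This makes the existence of the constants $a_l$ --- which is all the proposition literally asserts --- automatic, with no need to divine a closed form; that is what your approach buys. What it costs is that the explicit formula $a_l=\binom{j-l+k-1}{k-1}$, and with it the bounds $0\le a_l\le j^{k-1}$ that the paper actually invokes in the proof of Theorem \ref{k=1rapiddecreasingspeed}, must then be recovered through the Vandermonde-type convolution $\sum_{s=u}^{j-k}\binom{j-s-1}{k-1}\binom{s}{u}=\binom{j}{k+u}$, which you state correctly (it is the standard identity $\sum_{s}\binom{s}{a}\binom{n-s}{b}=\binom{n+1}{a+b+1}$ with $n=j-1$, $a=u$, $b=k-1$). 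The paper's verification-style argument reaches those explicit nonnegative integer coefficients with no binomial convolution at all, at the price of presenting the answer unmotivated; your argument is more constructive and cleanly separates the existence statement from the computation of the coefficients. Both are complete and both yield the quantitative information needed downstream.
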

\begin{proof}
Let
$$g(l)=\binom{j-l+k-1}{k-1}=\frac{(j-l+1)(j-l+2)\cdots (j-l+k-1) }{(k-1)!}$$
be a polynomial of $l$ with degree $k-1$.
Choose $a_l =g(l)$ when $k\le l\le j+k-1$.
So $a_l=0$ when $l= j+1,...,j+k-1$.
By equation (\ref{explicit formula for the k-th defference}),
the left side of equation (\ref{0822shi1}) is
\begin{align}\label{0822shi2}
\sum_{l=k}^j a_l\sum_{t=0}^k (-1)^{k-t} \binom{k}{t} f(n+l-k+t).
\end{align}
In the following, we consider the coefficients of $f(n+m)$ in the above formula for $m=0,\ldots,j$. When $m=j$, the coefficient of $f(n+m)$ in (\ref{0822shi2}) is $a_j=1$. For the case $j\geq 2k$, when $k\le m\le j-k$,
the coefficient of $f(n+m)$ in (\ref{0822shi2}) is
\[\sum_{t=0}^k a_{m-t+k}(-1)^{k-t} \binom k t = (-1)^k \sum_{t=0}^k g(m+t)(-1)^{k-t} \binom k t=(-1)^k \triangle^k g(m )=0.\]
Notice that $a_l=0$ for $l= j+1,...,j+k-1$.
When $j-k< m\le j-1$, the coefficient of $f(n+m)$ in (\ref{0822shi2}) is
\begin{equation}\label{formula0824}
\sum_{t=m+k-j}^k a_{m-t+k}(-1)^{k-t} \binom k t = \sum_{t=0}^k a_{m-t+k}(-1)^{k-t} \binom k t=(-1)^k \triangle^k g(m )=0.
\end{equation}
For the case $j\leq 2k-1$, when $k\leq m\leq j-1$, by a similar argument to equation (\ref{formula0824}), we have that the coefficient of $f(n+m)$ in (\ref{0822shi2}) is $0$.
Hence there are constants $c_0,...,c_{k-1}$ such that
\begin{align}\label{0822shi3}
\sum_{l=k}^j a_l\cdot \triangle^k f(n+l-k) = f(n+j)-\sum_{m=0}^{k-1} c_m f(n+m)
\end{align}
holds for each $n\in \mathbb{N}$.
To compute $c_m$,
let $f_m$ be the polynomial of degree $k-1$ such that $f_m(m)=1$, $f_m(t)=0$ for $0\le t\le k-1$ and $t\neq m$.
So
\[f_m(t)= \prod_{i=0,i\neq m}^{k-1}\frac{t-i}{m-i}. \]
Note that $\triangle^k f_m = 0$ and equation (\ref{0822shi3}) holds for any arithmetic function $f$. Let $f=f_{m}$ and $n=0$ in equation (\ref{0822shi3}), then
\[c_m=f_m(j)=\prod_{i=0,i\neq m}^{k-1}\frac{j-i}{m-i}.\]
Hence equation (\ref{0822shi1}) holds with $a_{l}=g(l)$ for $l=k,\ldots,j$.
\end{proof}
\section{A lemma}
In this section, we prove the following lemma which is used in the proof of Theorem \ref{SMDC implies the distribution of moebius in short interval}. There are some other methods to prove the following result, while for self-containing, we provide a proof that is adapt to our situation.
\begin{lemma}\label{disjointness and distribution in short interval}
Let $w(n)$ be a bounded arithmetic function and $\mathcal{D}$ be a non-empty family consisting of real-valued arithmetic functions. Then the following two conditions are equivalent.
\begin{itemize}
\item[(i)] For any increasing sequence $ \{N_i\}_{i=0}^\infty$ of natural numbers with $N_0=0$ and $\lim_{i\rightarrow\infty} (N_{i+1}-N_i)=\infty$, and any sequence $\{f_{i}(n)\}_{i=0}^{\infty}$ in $\mathcal{D}$, we have
\begin{equation}\label{in arithmetic progression}
\lim_{m\rightarrow \infty}\frac{1}{N_{m}}\sum_{i=0}^{m-1}\Bigg|\sum_{{N_{i}}\leq n<N_{i+1}}w(n)e(f_{i}(n))\Bigg|=0.
\end{equation}

\item[(ii)] We have
\begin{equation}\label{eq5}
\lim_{h\rightarrow \infty}\limsup_{X\rightarrow \infty}\frac{1}{Xh}\int_{X}^{2X} \sup_{f\in\mathcal{D}}\Bigg|\sum_{x\leq n<x+h}w(n)e(f(n))\Bigg|dx=0.
\end{equation}
\end{itemize}
\end{lemma}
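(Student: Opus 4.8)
The plan is to prove the two implications separately, obtaining (ii)$\Rightarrow$(i) directly and (i)$\Rightarrow$(ii) by contraposition. Throughout I write $M=\sup_{n}|w(n)|$ and, for an integer $h\ge 1$, set $G_h(x)=\sup_{f\in\mathcal D}\big|\sum_{x\le n<x+h}w(n)e(f(n))\big|$, so that $0\le G_h(x)\le Mh$ and condition (ii) reads $\lim_{h\to\infty}\limsup_{X\to\infty}\frac1{Xh}\int_X^{2X}G_h(x)\,dx=0$.

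For (ii)$\Rightarrow$(i), fix a sequence $\{N_i\}$ and functions $f_i\in\mathcal D$ as in (i). The key device is to tile each block $[N_i,N_{i+1})$ by windows of length $h$ and then \emph{average over the $h$ possible integer shifts}. For a shift $r\in\{0,\dots,h-1\}$, cutting $[N_i,N_{i+1})$ into the windows $[N_i+r+jh,\,N_i+r+(j+1)h)$ and using $f_i\in\mathcal D$ inside each window gives $\big|\sum_{N_i\le n<N_{i+1}}w(n)e(f_i(n))\big|\le \sum_j G_h(N_i+r+jh)+O(h)$, the $O(h)$ absorbing the two incomplete end--windows. Averaging over $r$ and using that each integer $x\in[N_i,N_{i+1})$ is hit exactly once by the double sum over $(r,j)$ yields $\big|\sum_{N_i\le n<N_{i+1}}w(n)e(f_i(n))\big|\le \frac1h\sum_{N_i\le x<N_{i+1}}G_h(x)+O(h)$. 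Summing over $0\le i<m$ telescopes the right--hand side into $\frac1h\sum_{0\le x<N_m}G_h(x)+O(mh)$. Since $G_h$ is piecewise constant with jumps only at integers, $\sum_{0\le x<N_m}G_h(x)$ is, up to the obvious boundary adjustment, equal to $\int_0^{N_m}G_h(x)\,dx$, which a dyadic decomposition of $[1,N_m]$ combined with (ii) bounds by $\ll \varepsilon h N_m$ once $h$ and $N_m$ are large. Finally the gap hypothesis $N_{i+1}-N_i\to\infty$ forces $N_m/m\to\infty$, hence $mh/N_m\to0$ for fixed $h$; dividing by $N_m$ and letting $m\to\infty$, then $\varepsilon\to0$, gives (i).

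For (i)$\Rightarrow$(ii) I argue by contraposition: assuming (ii) fails, I build a single partition $\{N_i\}$ and functions $f_i\in\mathcal D$ violating (i). Failure of (ii) produces $\delta>0$ and integers $h_k\to\infty$ such that for each $k$ there are arbitrarily large $X$ with $\int_X^{2X}G_{h_k}(x)\,dx\ge \tfrac\delta2 X h_k$. Since $G_{h_k}\le Mh_k$, a level--set (Markov) estimate shows the set $\{x\in[X,2X]:G_{h_k}(x)\ge\tfrac\delta4 h_k\}$ has measure $\gg_{\delta,M} X$; a greedy selection then extracts $\gg_{\delta,M} X/h_k$ pairwise disjoint windows $[x_t,x_t+h_k)$, on each of which some $f_t\in\mathcal D$ (chosen within $\tfrac\delta8 h_k$ of the supremum defining $G_{h_k}(x_t)$) satisfies $\big|\sum_{x_t\le n<x_t+h_k}w(n)e(f_t(n))\big|\ge\tfrac\delta8 h_k$. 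Declaring each such window to be one block and filling the complementary intervals by further blocks (with any $f\in\mathcal D$) makes the good blocks contribute a fixed proportion $c(\delta,M)>0$ of the total length $\approx 2X$ when the partial sum is read at the index $N_m\approx 2X$.

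To reconcile this with the requirement $N_{i+1}-N_i\to\infty$ I spread the construction over scales: choose $X_k$ rapidly increasing so that the ranges $[X_k,2X_k]$ are disjoint and each realises the lower bound at parameter $h_k$, fill the gaps $[2X_k,X_{k+1}]$ with blocks of increasing length, and place the good windows of length $h_k\to\infty$ inside $[X_k,2X_k]$. Because all summands are nonnegative, reading the average at the index $m$ with $N_m\approx 2X_k$ gives $\frac1{N_m}\sum_{i<m}\big|\sum_{N_i\le n<N_{i+1}}w(n)e(f_i(n))\big|\ge c(\delta,M)$ for infinitely many $m$, so the limit in (i) is not $0$. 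The main obstacle is precisely this converse construction: one must simultaneously pass from the positive \emph{average} of $G_h$ to \emph{many disjoint} windows carrying a genuine correlation, handling the fact that the supremum in $G_h$ is attained only approximately, and interleave windows of growing length $h_k$ across well--separated scales so that the resulting blocks obey $N_{i+1}-N_i\to\infty$ while retaining a uniform positive lower bound on the block average. By comparison, the shift--averaging and dyadic steps in (ii)$\Rightarrow$(i) are routine.
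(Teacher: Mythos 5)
Your direction (ii)$\Rightarrow$(i) is correct and is essentially the paper's own argument: where the paper uses pigeonhole to select one residue class $j_0 \bmod h$ along which to cut the blocks, you average over all $h$ shifts, and the rest (the dyadic comparison of $\int_0^{N_m}$ with condition (ii), the $O(mh)$ boundary terms, and $N_m/m\to\infty$ from the gap hypothesis) is the same; that difference is cosmetic.

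The gap is in (i)$\Rightarrow$(ii), in the construction of the sequence that is supposed to violate (i). Your greedy selection produces $\gg_{\delta,M} X/h_k$ pairwise \emph{disjoint} good windows inside $[X_k,2X_k]$, but disjointness alone puts no lower bound on the distance between consecutive selected windows: the level set $E=\{x\in[X,2X]:G_{h_k}(x)\ge\frac{\delta}{4}h_k\}$ is just a set of measure $\gg X$, so two consecutive good windows can be separated by a complementary interval of length $1$. Those complementary intervals must themselves be blocks $[N_i,N_{i+1})$ in your partition, and this happens inside $[X_k,2X_k]$ for arbitrarily large $k$, so the constructed sequence has $\liminf_i (N_{i+1}-N_i)<\infty$. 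It is therefore not admissible in (i), and no contradiction is reached; your multi-scale interleaving repairs only the gaps \emph{between} the ranges $[X_k,2X_k]$, not the short fillers inside them. The missing step is easy to supply but genuinely needed: for instance, run the greedy selection with enforced separation, choosing starting points pairwise $\ge 2h_k$ apart (each choice deletes measure $\le 4h_k$ from $E$, so you still keep $\gg_{\delta,M}X/h_k$ windows, and now every within-scale filler has length $\ge h_k\to\infty$); alternatively, merge any complementary interval of length $\le \delta h_k/(32M)$ into a neighbouring good window at an affordable loss in its correlation. The paper avoids the issue altogether by a different extraction device: instead of Markov plus greedy selection, it applies pigeonhole over the $h_j$ residue classes $y \bmod h_j$ to find one class for which the windows $[lh_j+y,(l+1)h_j+y)$ \emph{tile} $[X_j,2X_j]$ with total correlation $>\delta X_j$; every window of the tiling is then a block of length exactly $h_j$, near-optimal $g_l\in\mathcal{D}$ are chosen on all of them, and there are no fillers within a scale at all. (A last small point in your version: the selected $x_t$ are real, so one should pass to $\lceil x_t\rceil$ --- harmless since $h_k\in\mathbb{Z}$ leaves the inner sums unchanged --- to get integer $N_i$.)
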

\begin{proof}
We first prove that $ $(i)$\Rightarrow $(ii)$ $. Assume on the contrary that formula (\ref{eq5}) does not hold. Then there is a $\delta\in (0,1)$ and a sequence $\{h_{j}\}_{j=0}^{\infty}$ of positive integers with $\delta h_{j}>4$ and $\lim_{j\rightarrow\infty}h_{j}=\infty$, such that
\[\limsup_{X\rightarrow \infty}\frac{1}{X}\int_{X}^{2X} \sup_{f\in \mathcal{D}}\Bigg|\sum_{x\leq n<x+h_{j}}w(n)e(f(n))\Bigg|dx>2\delta h_{j}.\]
Given $h_{j}$, choose $X_{j}$ large enough with $\delta X_{j}>16h_{j}$ and $X_{j}>4X_{j-1}$ satisfying
\[\int_{X_{j}}^{2X_{j}} \sup_{f\in \mathcal{D}}\Bigg|\sum_{x\leq n<x+h_{j}}w(n)e(f(n))\Bigg|dx>2\delta X_{j}h_{j}.\]
By the pigeonhole principal, there is a $y_{j}\in [0, h_{j})$, such that
\[\sum_{l_{j}=\lfloor X_{j}/h_{j}\rfloor-1}^{\lfloor 2X_{j}/h_{j}\rfloor+1}\bigg(\sup_{f\in \mathcal{D}}\Bigg|\sum_{n=l_{j}h_{j}+y_{j}}^ {(l_{j}+1)h_{j}+y_{j}-1}w(n)e(f(n))\Bigg|\bigg)>\frac{3}{2}\delta X_{j}.\]
Furthermore, for each $l_{j}$ with $\lfloor X_{j}/h_{j}\rfloor-1 \leq l_{j}\leq \lfloor2X_{j}/h_{j}\rfloor+1$, we can find $g_{l_{j}}(n)\in \mathcal{D}$ such that

\begin{equation}\label{eq6}
\sum_{l_{j}=\lfloor X_{j}/h_{j}\rfloor-1}^{\lfloor 2X_{j}/h_{j}\rfloor+1}\Bigg|\sum_{n=l_{j}h_{j}+y_{j}}^ {(l_{j}+1)h_{j}+y_{j}-1}w(n)e(g_{l_{j}}(n))\Bigg|>\delta X_{j}.
\end{equation}
Now we construct $\{N_{i}\}_{i=0}^{\infty}$ and $\{f_{i}(n)\}_{i=0}^{\infty}$ in the following way. Choose $N_{0}=0$ and $N_{i}= \lfloor X_{J+1}/h_{J+1}\rfloor h_{J+1}+th_{J+1}+y_{J+1}$ when $i=\sum_{j=1}^{J}\lfloor X_{j}/h_{j}\rfloor-J+1+t$, $t=0,1,\ldots, \lfloor X_{J+1}/h_{J+1}\rfloor-2$, where $J=0,1,\ldots$. Then $\lim_{i\rightarrow \infty}(N_{i+1}-N_{i})=\infty$. For $J=0,1,\ldots$, we choose $f_{i}(n)=g_{\lfloor X_{J+1}/h_{J+1}\rfloor+t}(n)$ when $i=\sum_{j=1}^{J}\lfloor X_{j}/h_{j}\rfloor-J+1+t$, $t=0,1,\ldots, \lfloor X_{J+1}/h_{J+1}\rfloor-3$, and $f_{i}(n)=g_{\lfloor X_{1}/h_{1}\rfloor}(n)$ otherwise.
Then by equation (\ref{in arithmetic progression}), there is an $m_{0}$ and $J_{0}$ with $m_{0}=\sum_{j=1}^{J_{0}-1}\lfloor X_{j}/h_{j}\rfloor-J_{0}+2$, such that
\begin{equation}\label{eq7}
\frac{1}{N_{m_{0}
+\lfloor X_{J_{0}}/h_{J_{0}}\rfloor-2}}\Bigg(\sum_{i=0}^{m_{0}
+\lfloor X_{J_{0}}/h_{J_{0}}\rfloor-3}\Bigg|\sum_{N_{i}\leq n <N_{i+1}}w(n)e(f_{i}(n))\Bigg|\Bigg)<\frac{\delta}{4}.
\end{equation}
Note that $N_{m_{0}}=\lfloor X_{J_{0}}/h_{J_{0}}\rfloor h_{J_{0}}+y_{J_{0}}$ and $N_{m_{0}
+\lfloor X_{J_{0}}/h_{J_{0}}\rfloor-2}=2(\lfloor X_{J_{0}}/h_{J_{0}}\rfloor-1)h_{J_{0}}+y_{J_{0}}$.
By formula (\ref{eq6}), we have that $\sum_{i=m_{0}}^{m_{0}
+\lfloor X_{J_{0}}/h_{J_{0}}\rfloor-3}|\sum_{N_{i}\leq n <N_{i+1}}w(n)e(f_{i}(n))|>\frac{\delta X_{J_{0}}}{2}$. Thus the left side of formula (\ref{eq7})$>\frac{(\delta/2) X_{J_{0}}}{2X_{J_{0}}-h_{J_{0}}}>\frac{\delta}{4}$. This contradicts the right side of formula $(\ref{eq7})$. Hence equation (\ref{eq5}) holds.

Next, we show that $ $(ii)$ \Rightarrow $(i)$ $.
Given $\epsilon$ with $0<\epsilon<1$. Let $h$ be a fixed sufficiently large positive integer. By equation (\ref{eq5}) and a dyadic subdivision, there is an $m_{0}\geq 1$ such that whenever $m>m_{0}$, we have  $N_{m}-N_{m-1}>\frac{2}{\epsilon}h$ and
\begin{equation} \label{eq0904}
\sum_{x=0}^{(\lfloor N_{m}/h\rfloor+1)h-1}\sup_{f\in \mathcal{D}}\Bigg|\sum_{x\leq n< x+h}w(n)e(f(n))\Bigg| \leq
\epsilon N_{m}h.
\end{equation}
Let $S_{j}=\{lh+j: l=0,1,\cdot\cdot\cdot, \lfloor N_{m}/h\rfloor\}$, for $0\leq j\leq h-1$.  Then, by (\ref{eq0904}), there is a $j_{0}$ such that
\begin{equation} \label{0904formula2}
\sum_{x\in S_{j_{0}}}\sup_{f\in \mathcal{D}}\Bigg|\sum_{x\leq n< x+h}w(n)e(f(n))\Bigg| \leq \epsilon N_{m}.
\end{equation}
Suppose that $S_{j_{0}}\cap [N_{i},N_{i+1})=\{x_{1}^{(i)},x_{2}^{(i)},\cdot\cdot\cdot,x_{l_{i}}^{(i)}\}$, where $x_{1}^{(i)}<x_{2}^{(i)}<\cdot\cdot\cdot<x_{l_{i}}^{(i)}$, $i=0,1,\cdot\cdot\cdot,m-1$. Then
\[\Bigg|\sum_{N_{i}\leq n<N_{i+1}}w(n)e(f_{i}(n))\Bigg|\leq\sum_{t=1}^{l_{i}-1}\sup_{f\in\mathcal{D}}\Bigg|\sum_{x_{t}^{(i)}\leq n<x_{t}^{(i)}+h}w(n)e(f(n))\Bigg|+2h.\]
Hence
\[\sum_{i=0}^{m-1}\Bigg|\sum_{N_{i}\leq n<N_{i+1}}w(n)e(f_{i}(n))\Bigg|\leq \sum_{x\in S_{j_{0}}}\sup_{f\in \mathcal{D}}\Bigg|\sum_{x\leq n< x+h}w(n)e(f(n))\Bigg| +2mh.\]
By formula (\ref{0904formula2}),
\[\frac{1}{N_{m}}\sum_{i=0}^{m-1}\Bigg|\sum_{N_{i}\leq n<N_{i+1}}w(n)e(f_{i}(n))\Bigg|\leq \frac{\epsilon N_{m}}{N_{m}}+\frac{2mh}{N_{m}}<2\epsilon.\]
So we obtain equation (\ref{in arithmetic progression}).
\end{proof}

\end{document}